\DeclareMathOperator{\Div}{Div}
\DeclareMathOperator{\Supp}{Supp}
\DeclareMathOperator{\vol}{vol}
 \numberwithin{equation}{subsection}
 \numberwithin{footnote}{subsection}
 \newtheorem{cor}[subsection]{Corollary}
 \newtheorem{lem}[subsection]{Lemma}
 \newtheorem{prop}[subsection]{Proposition}
 \newtheorem{thm}[subsection]{Theorem}
 \newtheorem{conj}[subsection]{Conjecture}
\theoremstyle{upright}
 \newtheorem{exa}[subsection]{Example}
 \newtheorem{rem}[subsection]{Remark}
 \newcommand{\N}{\mathbb N}
 \newcommand{\PP}{\mathbb P}
 \newcommand{\Q}{\mathbb Q}
 \newcommand{\R}{\mathbb R}
 \newcommand{\Z}{\mathbb Z}
  \newcommand{\C}{\mathbb C}
 \newcommand{\bir}{\dashrightarrow}
 \newcommand{\rddown}[1]{\left\lfloor{#1}\right\rfloor} 
\title{\large G\MakeLowercase{eometry of polarised varieties}}
\thanks{2010 MSC:
14C20, 
14E05, 
14J17, 
14J10,  
14J32,  
14J45, 
14E30. 
}
\author{\large C\MakeLowercase{aucher} B\MakeLowercase{irkar}}
\date{\today}
\begin{document}
\maketitle


\begin{abstract}
In this paper, we investigate the geometry of projective varieties polarised by ample and more generally nef and big 
Weil divisors. First we study birational boundedness of linear systems. We show that 
if $X$ is a projective variety of dimension $d$ with $\epsilon$-lc singularities for
$\epsilon>0$, and if $N$ is a nef and big Weil divisor on $X$ such that $N-K_X$ is pseudo-effective, 
then the linear system $|mN|$ defines a birational map for some natural number $m$ depending only on $d,\epsilon$.  
This is key to proving various other results. 
For example, it implies that if $N$ is a big Weil divisor (not necessarily nef) on a klt Calabi-Yau variety of dimension $d$, 
then the linear system $|mN|$ defines a birational map for some natural number $m$ depending only 
on $d$. It also gives new proofs of some known results, for example, if 
$X$ is an $\epsilon$-lc Fano variety of dimension $d$ then taking 
$N=-K_X$  we recover birationality of $|-mK_X|$ for bounded $m$. 

We prove similar birational boundedness 
results for nef and big Weil divisors $N$ on projective klt varieties $X$ when both $K_X$ and $N-K_X$ are pseudo-effective 
(here $X$ is not assumed $\epsilon$-lc).

Using the above, we show boundedness of polarised varieties under some natural conditions. 
We extend these to boundedness of semi-log canonical Calabi-Yau pairs polarised by 
effective ample Weil divisors not containing lc centres. We will briefly discuss applications to existence of projective coarse moduli spaces of such polarised Calabi-Yau pairs.
 
\end{abstract}

\tableofcontents


\section{\bf Introduction}

We work over an algebraically closed field $k$ of characteristic zero unless stated otherwise. By 
\emph{integral divisor} we will mean a Weil divisor with integer coefficients which is not necessarily Cartier.\\ 

Assume that $X$ is a normal projective variety and $N$ is an  
integral divisor on $X$. For each natural number $m$ we have the linear system $|mN|$. 
In algebraic geometry it is often a central theme 
to understand such linear systems and their associated maps $\phi_{|mN|}\colon X \bir \PP^{h-1}$ defined by 
a basis of $H^0(mN)$ where $h=h^0(mN)$. When $N$ is ample (or just nef and big)
a main problem is to estimate those $m$ for which the linear system $|mN|$ defines a birational map, 
ideally for $m$ bounded in terms of some basic invariants of $X$. 
We need to impose 
some conditions to get reasonable results and in practice this means we should somehow  
get the canonical divisor of $X$ involved. 

Indeed there has been extensive studies in the literature when $N=K_X$ is ample or $N=-K_X$ is ample. 
If $X$ has log canonical (lc) singularities and $N=K_X$ is ample, then $|mN|$ defines a birational map 
for some $m$ depending on $\dim X$  [\ref{HMX2}]. On the other hand, 
when $X$ has $\epsilon$-log canonical ($\epsilon$-lc) singularities 
with $\epsilon>0$ and $N=-K_X$ is ample, then $|mN|$ defines a birational map for some $m$ depending on $\dim X, \epsilon$ 
 [\ref{B-compl}, theorem 1.2]; here the $\epsilon$-lc condition cannot be removed.

In this paper, we study the linear systems $|mN|$ in a rather general context 
when $N$ is nef and big (we will also consider the non-nef and non-big cases in some places). 
As before we would like to see when there is an $m$ depending only on dimension of $X$ and 
some other data so that $|mN|$ defines a birational map. It turns out that if $X$ has 
$\epsilon$-lc singularities with $\epsilon>0$ and if $N-K_X$ is pseudo-effective, 
then $|mN|$ defines a birational map for some 
$m$ depending only on $\dim X,\epsilon$ (see Theorem \ref{t-eff-bir-nefbig-weil}). 
The result in particular can be applied to varieties with terminal and canonical singularities.

On the other hand, we show that if $X$ has klt singularities and if both $K_X$ and $N-K_X$ are pseudo-effective, 
then $|mN|$ defines a birational map for some 
$m$ depending only on $\dim X$ (see Theorem \ref{t-eff-bir-nefbig-klt}). A corollary of each of these results is that 
when $X$ is klt Calabi-Yau, i.e. $X$ is projective with klt singularities and $K_X\equiv 0$, then $|mN|$ defines a birational map 
for some $m$ depending only on $\dim X$; in this case we do not even need to assume $N$ to be nef but only big 
(see Corollary \ref{cor-cy}).

Applying the results of the previous paragraph we prove boundedness of varieties under certain conditions. 
Let $d$ be a natural number and $\epsilon,v$ be positive rational numbers. 
If $X$ is a projective variety of dimension $d$ with $\epsilon$-lc singularities, $K_X$ is nef, and $N$ is a 
nef and big integral divisor with volume $\vol(K_X+N)\le v$, then $X$ belongs to a bounded family 
(see Theorem \ref{t-bnd-nef-pol-pairs}). In particular, if $X$ is a klt Calabi-Yau variety 
of dimension $d$ and $N$ is a nef and big integral divisor with $\vol(N)\le v$, then $X$ 
belongs to a bounded family (see Corollary \ref{cor-bnd-cy}). 

In the Calabi-Yau case we can further prove boundedness in the semi-log canonical (slc) case. 
Slc schemes are higher dimensional analogues of nodal curves which may not be normal nor irreducible.
If $X$ is an slc Calabi-Yau of dimension $d$ and $N\ge 0$ is an ample integral divisor such that 
$(X,uN)$ is slc for some $u>0$ and if $\vol(N)=v$, then $X$ belongs to a bounded family 
(see Corollary \ref{cor-bnd-semi-lc-pol-CY}). Such $X$ are called polarised Calabi-Yau. 
Similar boundedness holds for slc Calabi-Yau pairs $(X,B)$.

The boundedness results just mentioned provide an important ingredient for constructing projective moduli spaces. In general, Calabi-Yau varieties do not carry any ``canonical" polarisation. To form moduli spaces 
one needs to take ample divisors with certain properties, e.g. fixed volume and bounded Cartier index. 
However, to get a compact moduli space one needs to consider limits of such polarised 
varieties and this causes problems. One issue is that the limiting space may not be normal 
any more, that is, one has to consider slc schemes. Another problem is that we need these limiting spaces to be bounded 
in order to get a finite type moduli space. The boundedness mentioned in the last 
paragraph is exactly what we need. 

In the rest of this introduction we will state the results mentioned above in more general forms. 
We actually prove even more general versions of many of them later in the paper.\\

{\textbf{\sffamily{Birational boundedness for nef and big integral divisors.}}}
The first main result of this paper is the following.

\begin{thm}\label{t-eff-bir-nefbig-weil}
Let $d$ be a natural number and $\epsilon$ be a positive real number. 
Then there exists a natural number $m$ depending only on $d,\epsilon$ satisfying the following. Assume that 
\begin{itemize}
\item $X$ is a projective $\epsilon$-lc variety of dimension $d$, 

\item $N$ is a nef and big integral divisor on $X$, and 

\item $N-K_X$ is pseudo-effective.
\end{itemize}
Then $|m'N+L|$ and $|K_X+m'N+L|$ define birational maps for any natural number $m'\ge m$ and any
integral pseudo-effective divisor $L$. 
\end{thm}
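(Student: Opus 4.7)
The plan is to proceed by induction on the dimension $d$; the case $d=1$ is immediate. First I would reduce the two birationality claims to a single one: writing
\[
m'N + L \;=\; K_X + (m'-1)N + \bigl((N-K_X)+L\bigr),
\]
the divisor $(N-K_X)+L$ is integral and pseudo-effective, so proving birationality of $|K_X+m''N+L'|$ for all $m''\geq m$ and all integral pseudo-effective $L'$ automatically yields the assertion for $|m'N+L|$ with $m'\geq m+1$. Hence it suffices to treat the adjoint linear system $|K_X+m'N+L|$.

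The inductive step rests on a \emph{bounded non-klt creation} lemma: there exists a constant $\lambda=\lambda(d,\epsilon)$ such that for any two very general closed points $x_1,x_2\in X$ one can find an effective $\mathbb{Q}$-divisor $\Delta\sim_{\mathbb{Q}}\lambda N$ for which, after tie-breaking, $(X,\Delta)$ admits a unique minimal non-klt centre $G$ through $x_1$ that also contains $x_2$. This is the heart of the proof and the main obstacle: the uniform bound on $\lambda$ requires a lower bound on a local volume of $N$ at a general point. I would establish it by an internal induction combining the outer inductive hypothesis (applied to subvarieties $V$ through $x_1$ produced in earlier rounds of cutting) with boundedness of complements to control the singularities introduced by tie-breaking. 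The $\epsilon$-lc hypothesis together with pseudo-effectivity of $N-K_X$ plays a crucial role here, forcing bounded discrepancies under adjunction so that the outer induction can actually be invoked on $V$.

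Given such $\Delta$, after a small $\mathbb{Q}$-factorialisation and a log resolution consider
\[
0 \longrightarrow \mathcal{J}(X,\Delta)\otimes\mathcal{O}_X(K_X+m'N+L) \longrightarrow \mathcal{O}_X(K_X+m'N+L) \longrightarrow \mathcal{O}_G\bigl((K_X+m'N+L)|_G\bigr) \longrightarrow 0,
\]
where $G$ is the cosupport of $\mathcal{J}(X,\Delta)$. For $m'\geq\lceil\lambda\rceil+1$ the divisor $m'N-\Delta$ is ample, so $m'N-\Delta+L$ is big, and Nadel-type vanishing yields $H^1=0$; sections thus restrict surjectively to $G$. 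By adjunction $(G,\Delta_G)$ is $\epsilon'$-lc for some $\epsilon'=\epsilon'(d,\epsilon)$, and the restriction $(K_X+m'N+L)|_G = K_G + \Delta_G + (m'N+L-\Delta)|_G$ satisfies the hypotheses of the outer induction on $G$. The inductive conclusion on $G$ separates $x_1$ and $x_2$ there, and lifting separates them on $X$.

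The principal obstacle is clearly the bounded non-klt creation: once $\lambda$ is known to depend only on $d$ and $\epsilon$, the rest is fairly standard machinery. A secondary, purely technical obstacle is carrying the pseudo-effective (not necessarily nef) divisor $L$ through the vanishing step, which I would handle by replacing $\mathcal{J}(X,\Delta)$ with an asymptotic multiplier ideal built from $\Delta$ together with $L$, at the cost of enlarging $m$ by a constant depending only on $d$.
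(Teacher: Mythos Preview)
Your proposal has a genuine circularity at its core. The ``bounded non-klt creation'' lemma you posit --- finding $\Delta\sim_{\mathbb Q}\lambda N$ with $\lambda=\lambda(d,\epsilon)$ creating a non-klt centre through a general point --- presupposes a lower bound on $\vol(N)$ depending only on $d,\epsilon$. But no such lower bound is available a priori; indeed, the inequality $\vol(N)\ge 1/m^d$ is a \emph{consequence} of the theorem, not an input. Your suggestion to recover the volume bound via the outer inductive hypothesis on intermediate centres $V$ does not close the loop: to invoke the theorem on $V$ you would need $N|_V$ to be an integral divisor and $(V,\text{boundary})$ to be $\epsilon'$-lc for a controlled $\epsilon'$, and neither of these is automatic. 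Integrality of $N|_F$ (for $F$ the normalisation of a general non-klt centre) is a genuine issue which the paper handles separately via a descent result (Proposition~\ref{p-non-klt-centres-restriction-weil-div}), and the $\epsilon'$-lc control on the centre requires another substantial argument (Proposition~\ref{p-bnd-sing-on-non-klt-centre}), both of which in turn need $|mN|$ to already define a birational map for \emph{some} $m$.

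The paper's route is structurally different. Rather than bounding the multiple needed to create non-klt centres, it lets $m$ be the smallest number with $|mN|$ birational and proves a dichotomy (Proposition~\ref{p-eff-bir-nefbig-weil-1}): either $m$ is bounded, or $\vol(mN)$ is bounded. In the second case one gets log birational boundedness of $(X,\Supp M)$ for $M\in|mN|$, hence a bounded lc threshold for $M$, and then the boundedness of the lc model forces $m$ to be bounded after all (Proposition~\ref{p-eff-bir-nefbig-weil-2}). The non-klt centre machinery is used inside this dichotomy argument, but with the crucial twist that one works with $\vol(N|_G)$ \emph{relative to the unknown} $m$ and derives contradictions from unbounded ratios, rather than trying to bound absolute multiples. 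Finally, the reduction to the case $N+K_X$ big (needed for this argument) uses the pseudo-effective threshold Lemma~\ref{l-peff-threshold}, which rests on BAB in dimension $d$ --- another ingredient absent from your sketch.

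A smaller point: your vanishing step is also not quite right. Nadel vanishing needs $m'N-\Delta+L$ nef and big, not merely big, and adding a pseudo-effective but non-nef $L$ destroys nefness. The paper sidesteps this entirely by working with potentially birational divisors and \cite[Lemma~2.3.4]{HMX1} rather than lifting sections via multiplier-ideal vanishing.
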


We actually prove a more general statement in which we replace the assumption of $N$ being integral with assuming 
$N=E+R$ where $E$ is integral and pseudo-effective and $R\ge 0$ is an $\R$-divisor whose non-zero coefficients 
are $\ge \delta$ for some fixed $\delta>0$ (see Theorem \ref{t-eff-bir-general}). Similarly we will prove 
more general forms of many of the results below.  

Note that if $-K_X$ is pseudo-effective, then $N-K_X$ is automatically pseudo-effective. 
This is in particular useful on Calabi-Yau pairs as we will see later. Also note that 
instead of $X$ being $\epsilon$-lc we can assume $(X,B)$ is $\epsilon$-lc for some boundary 
$B$ because we can apply the theorem on a $\Q$-factorialisation of $X$.

The theorem in particular applies well to the following three cases:
\begin{enumerate}
\item when $K_X$ is nef and big and $N=K_X$,
\item when $-K_X$ is nef and big and $N=-K_X$, and 
\item when $K_X\equiv 0$ and $N$ is nef and big.
\end{enumerate}

Cases (1) and (2) are well-known by Hacon-M$\rm ^c$Kernan-Xu [\ref{HMX2}] 
(see also [\ref{HM-bir-bnd}][\ref{Ta}][\ref{Tsuji}]) and Birkar [\ref{B-compl}], respectively; 
however, we reprove these results as we only rely on some  
of the ideas and constructions of [\ref{HMX2}] and [\ref{B-compl}]. In case (1) our 
proof is essentially the same as the proof in [\ref{HMX2}]. But in case (2) we get a new proof  
 which is in some sense quite different from the proof in [\ref{B-compl}] 
despite similarities of the two proofs because here we do not use boundedness of complements 
in dimension $d$ (see \ref{rem-Fano-eff-bir-new-proof} for more details); but this relies on the BAB 
[\ref{B-BAB}, Theorem 1.1] in lower dimension which is reasonable as we want to apply induction on dimension.
 Case (3) is new which we will state below more precisely in \ref{cor-cy}.

\begin{cor}\label{cor-eff-bir-nefbig-weil}
Let $d$ be a natural number and $\epsilon$ be a positive real number. 
Then there exist natural numbers $m,l$ depending only on $d,\epsilon$ satisfying the following. Assume that 
\begin{itemize}
\item $X$ is a projective $\epsilon$-lc variety of dimension $d$, and 

\item $N$ is a nef and big integral divisor on $X$.
\end{itemize}
\vspace{0.2cm}
Then $|m'K_X+l'N+L|$ defines a birational map for any natural numbers $m'\ge m$ and $l'\ge lm'$ and any
pseudo-effective integral divisor $L$.
 
\end{cor}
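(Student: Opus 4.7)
The plan is to deduce this corollary from Theorem~\ref{t-eff-bir-nefbig-weil} by applying that theorem with $N$ replaced by the auxiliary divisor $M := K_X + lN$, for a constant $l = l(d,\epsilon)$ to be specified. Let $m_0 = m(d,\epsilon)$ be the constant supplied by Theorem~\ref{t-eff-bir-nefbig-weil}, and take $m := m_0 + 1$. I would first carry out the algebraic identity
\[ m' K_X + l' N + L \;=\; K_X + (m'-1) M + L'', \qquad L'' := (l' - lm')N + lN + L, \]
which one checks by expanding $M = K_X + lN$. Under the hypotheses $l' \ge lm'$, $N$ nef, and $L$ pseudo-effective integral, the remainder $L''$ is a pseudo-effective integral divisor.

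Assuming $M$ is nef and big integral, the hypotheses of Theorem~\ref{t-eff-bir-nefbig-weil} are satisfied for $(X,M)$, because $M - K_X = lN$ is nef-big and hence pseudo-effective. Since $m' - 1 \ge m_0$, the second conclusion of that theorem applied to $(X, M)$ with pseudo-effective input $L''$ yields that $|K_X + (m'-1) M + L''|$ defines a birational map. Because the right hand side of the identity equals $m' K_X + l' N + L$, this is exactly the conclusion of the corollary.

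The main obstacle is to find $l = l(d,\epsilon)$ making $M = K_X + lN$ nef and big for every eligible pair $(X,N)$. Bigness amounts to a uniform bound on the pseudo-effective threshold of $-K_X$ against a nef-big integral divisor on an $\epsilon$-lc variety, the kind of statement the $\epsilon$-lc hypothesis should supply via boundedness-of-complements-style arguments [\ref{B-compl}] — for instance by running a $K_X$-MMP on a small $\Q$-factorialisation of $X$ and appealing to boundedness on the resulting Mori fiber space. Once bigness is secured, nefness of $M$ can be arranged by passing further to a $(K_X + lN)$-MMP-output; since $N$ is nef throughout and the steps can be chosen $N$-trivial, the birationality of the output's linear system transfers back to $X$. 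If uniform nef-bigness of $M$ is too rigid to obtain directly, an alternative is to use the more flexible form of Theorem~\ref{t-eff-bir-nefbig-weil} alluded to just after its statement, which replaces nef-bigness of $N$ by a decomposition $N = E + R$ with $E$ pseudo-effective integral and $R \ge 0$ having coefficients bounded below, a condition $M$ should satisfy more readily. Establishing this uniform input is the main technical step; modulo it, the corollary follows from the elementary rewriting above and a single application of Theorem~\ref{t-eff-bir-nefbig-weil}.
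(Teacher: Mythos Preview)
Your overall strategy matches the paper's: introduce $M=K_X+lN$, use a uniform pseudo-effective threshold bound (this is exactly Lemma~\ref{l-peff-threshold}, which in turn rests on BAB) to make $M$ big, pass to a minimal model $X'$ of $M$, and invoke Theorem~\ref{t-eff-bir-nefbig-weil} (or rather~\ref{t-eff-bir-general}) there. The algebraic identity you wrote down is correct and elegant.

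There is, however, a genuine gap in the transfer-back step. The claim that the $M$-MMP steps ``can be chosen $N$-trivial'' is false: you are contracting $(K_X+lN)$-negative rays, and $N$ is nef and big so it is typically positive on them. What \emph{is} true is that $H^0(X,pM)=H^0(X',pM')$ for all $p$, because the map is $M$-nonpositive; so birationality of $|pM'|$ does give birationality of $|pM|$. But your identity uses the \emph{second} conclusion of Theorem~\ref{t-eff-bir-nefbig-weil} on $X'$ to get $|K_{X'}+(m'-1)M'+L''_{X'}|$ birational, and you then need $|m'K_X+l'N+L|$ birational on $X$. For the full divisor $D=m'K_X+l'N+L$ one only has an inclusion $H^0(X,D)\hookrightarrow H^0(X',D')$, which can be strict when the MMP has divisorial contractions (the exceptional coefficient in $D-f^*D'$ involves $m'$ times the discrepancy of $K_X$, which can be negative). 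So the birationality on $X'$ does not automatically descend. Your proposed alternative via the $E+R$ form of the theorem does not help either: Theorem~\ref{t-eff-bir-general} still requires the divisor to be nef and big; the decomposition is additional structure, not a substitute for nefness.

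The paper closes this gap differently (see the proof of Corollary~\ref{cor-eff-bir-nefbig-general}): it uses only the \emph{first} conclusion on $X'$ to get $|mM'|$ birational for a bounded $m$, hence $|mM|$ birational on $X$. Then it works entirely on $X$: a bounded multiple of $M$ is potentially birational, and one decomposes $m'K_X+l'N+L-K_X$ as (potentially birational)~$+$~(pseudo-effective) and applies [\ref{HMX1}, Lemma~2.3.4]. This sidesteps the comparison of $H^0$'s for $D$. If you patch your argument this way, it becomes essentially the paper's proof.
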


In the above results we cannot drop the $\epsilon$-lc assumption, see Example \ref{exa-3}. 
However, we can replace it with some other conditions as in the next result.

\begin{thm}\label{t-eff-bir-nefbig-klt}
Let $d$ be a natural number and $\Phi\subset [0,1]$ be a DCC set of rational numbers. 
Then there is a natural number $m$ depending only on $d,\Phi$ satisfying the following.
Assume 
\begin{itemize}
\item $(X,B)$ is a klt projective pair of dimension $d$, 

\item the coefficients of $B$ are in $\Phi$, 

\item $N$ is a nef and big integral divisor, and

\item  $N-(K_X+B)$ and $K_X+B$ are pseudo-effective. 
\end{itemize}
Then $|m'N+L|$ and $|K_X+m'N+L|$ define birational maps for any natural number $m'\ge m$ and
any integral pseudo-effective divisor $L$.
\end{thm}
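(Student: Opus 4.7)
The plan is to reduce Theorem \ref{t-eff-bir-nefbig-klt} to the effective birationality theorem of Hacon--M$\rm ^c$Kernan--Xu [\ref{HMX2}] for klt pairs of log general type with DCC boundary, combined with the non-klt centre lifting machinery used for Theorem \ref{t-eff-bir-nefbig-weil}. The two pseudo-effectivity hypotheses will be used in tandem to manufacture an auxiliary klt boundary $\Delta\geq B$ with $K_X+\Delta$ big, $\Q$-linearly equivalent to a bounded combination of $N$ and a pseudo-effective class, and with coefficients in a DCC set $\Phi'$ depending only on $d,\Phi$.

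First I would construct $\Delta$. Since $N-(K_X+B)$ is pseudo-effective and $N$ is big, the divisor $2N-(K_X+B)$ is big; by Kodaira's lemma we may write $2N-(K_X+B)\sim_{\Q}A+E$ with $A$ ample and $E\geq 0$. For a suitably divisible integer $k_0=k_0(d,\Phi)$ and a very general $A'\sim_{\Q}A$, set
\[ \Delta:=B+\tfrac{1}{k_0}(A'+E),\qquad K_X+\Delta\;\sim_{\Q}\;\bigl(1-\tfrac{1}{k_0}\bigr)(K_X+B)+\tfrac{2}{k_0}N. \]
For $k_0$ large the pair $(X,\Delta)$ is klt (by Bertini, taking $A'$ general), and $K_X+\Delta$ is big because $K_X+B$ is pseudo-effective and $N$ is big. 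Granting that the coefficients of $\Delta$ lie in a DCC set $\Phi'=\Phi'(d,\Phi)$, [\ref{HMX2}] provides $m_1=m_1(d,\Phi')$ with $|m_1(K_X+\Delta)|$ birational; clearing denominators yields birationality of $|\mu N+P|$ for some bounded positive integer $\mu$ and an integral pseudo-effective divisor $P$.

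From here the proof of Theorem \ref{t-eff-bir-nefbig-weil} can be imitated: at a very general pair of points $x,y\in X$, one produces an effective divisor in a multiple of $\mu N+P$ with large multiplicity at $x$, tie-breaks to obtain a unique non-klt place centred at $x$, and lifts a section separating $x$ and $y$ to $|m'N+L|$ and $|K_X+m'N+L|$ via Kawamata--Viehweg vanishing. The vanishing applies because the twist $m'N+L-(K_X+B)$ is big for $m'\geq m$ bounded below, which is where both the hypothesis that $N-(K_X+B)$ is pseudo-effective and that $L$ is pseudo-effective enter. The principal difficulty is forcing the coefficients of $\Delta$ into the DCC set $\Phi'$: a priori the multiplicities of the effective divisor $E$ produced by Kodaira's lemma are uncontrolled, so one must either track coefficients on a log resolution using the DCC of $\Phi$ together with standard log discrepancy formulas, or replace Kodaira's lemma by a refined decomposition with multiplicities a priori bounded in terms of $d$ and $\Phi$, in the spirit of the boundedness-of-complements techniques appearing elsewhere in Birkar's work. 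Once this step is in place, [\ref{HMX2}] and the lifting argument dispatch the rest.
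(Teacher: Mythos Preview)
Your proposal has a genuine gap, and you identify it yourself: the coefficients of $\Delta=B+\tfrac{1}{k_0}(A'+E)$ do not lie in any DCC set determined by $d$ and $\Phi$. The effective part $E$ coming from Kodaira's lemma is completely uncontrolled; its multiplicities can be arbitrary positive rationals, and no ``tracking on a log resolution'' or ``refined decomposition'' in the spirit of complements will fix this, because the obstruction is intrinsic to writing a big divisor as ample plus effective. The suggestions you offer at the end are not arguments. Without DCC coefficients you cannot invoke [\ref{HMX2}], and the rest of your outline collapses. There is a second, smaller problem: even if $|m_1(K_X+\Delta)|$ were birational, your ``clearing denominators'' step does not produce $|\mu N+P|$ with $P$ integral and pseudo-effective, since $m_1(K_X+\Delta)\sim_\Q m_1(1-\tfrac{1}{k_0})(K_X+B)+\tfrac{2m_1}{k_0}N$ and $K_X+B$ is not integral.

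The paper's route is entirely different and does not pass through [\ref{HMX2}]'s effective birationality at all. The idea is to reduce to the $\epsilon$-lc case already handled by Theorem \ref{t-eff-bir-general}. The key new ingredient is Proposition \ref{p-pseudo-eff-rddwn}: there is $l=l(d,\Phi)$ such that if $(X,B)$ is lc with coefficients in $\Phi\cup(\tfrac{l-1}{l},1]$ and $K_X+B$ is pseudo-effective, then $K_X+B_{\rddown{l}}$ is still pseudo-effective (this extends an [\ref{HMX2}] lemma from the big to the pseudo-effective case, via global ACC and ACC for lc thresholds). One then extracts exactly the prime divisors with $a(D,X,B)<\tfrac{1}{2l}$ to get $X'\to X$; the resulting $X'$ is $\tfrac{1}{2l}$-lc, and the exceptional contribution to $B'-B'_{\rddown{l}}$ dominates the reduced exceptional divisor, which allows one to package a suitable multiple of the pullback of $N$ as (integral big) $+$ (effective with coefficients $\ge 1$). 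Theorem \ref{t-eff-bir-general} then applies directly on $X'$, and a short potentially-birational argument transfers the conclusion back to $|m'N+L|$ and $|K_X+m'N+L|$ on $X$.
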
  

The theorem does not hold if we replace the klt property of $(X,B)$ with lc. Indeed 
any klt Fano variety $X$ of dimension $d$ admits an lc $n$-complement $K_X+B$ 
for some $n$ depending only on $d$ [\ref{B-compl}] (so $n(K_X+B)\sim 0$)
but taking $N=-K_X$ there is no bounded $m$ so that $|mN|$ defines a birational map 
as Example \ref{exa-3} shows.\\

{\textbf{\sffamily{Birational boundedness for big integral divisors on Calabi-Yau pairs.}}}
A consequence of both \ref{t-eff-bir-nefbig-weil} and \ref{t-eff-bir-nefbig-klt} 
is a birational boundedness statement regarding 
Calabi-Yau pairs. A Calabi-Yau pair  is a projective pair $(X,B)$ with $K_X+B\sim_\R 0$; 
we do not assume vanishing of $h^i(\mathcal{O}_X)$ for $0<i<\dim X$ as is customary in some other contexts.

\begin{cor}\label{cor-cy}
Let $d$ be a natural number and $\Phi\subset [0,1]$ be a DCC set of real numbers. 
Then there is a natural number $m$ depending only on $d,\Phi$ 
satisfying the following. Assume 
\begin{itemize}
\item $(X,B)$ is a klt Calabi-Yau pair of dimension $d$, 

\item the coefficients of $B$ are in $\Phi$, and  

\item $N$ is a big integral divisor on $X$.

\end{itemize}
\vspace{0.2cm}
Then $|m'N+L|$ and $|K_X+m'N+L|$ define birational maps for any natural number $m'\ge m$ and
 any integral pseudo-effective divisor $L$. 
In particular, the volume $\vol(N)\ge \frac{1}{m^d}$.
\end{cor}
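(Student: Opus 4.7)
The plan is to reduce to Theorem~\ref{t-eff-bir-nefbig-klt} by replacing $(X,B)$ with a birational model on which $N$ becomes nef, while preserving the klt Calabi-Yau structure and the DCC property of the boundary.

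First, after a small $\Q$-factorialisation I may assume $X$ is $\Q$-factorial. Since $N$ is big, I pick an effective $\Q$-divisor $G\sim_\Q N$ and a rational $t>0$ small enough that $(X,B+tG)$ is klt. Using $K_X+B\sim_\Q 0$, we have $K_X+B+tG\sim_\Q tG$, which is big; so by BCHM the $(K_X+B+tG)$-MMP with scaling terminates at a minimal model $\pi\colon X\bir X'$ on which $K_{X'}+B'+tG'$ is nef. Since pushforward preserves $\Q$-linear equivalence, $K_{X'}+B'\sim_\Q 0$; hence $(X',B')$ is again a klt Calabi-Yau pair and the nefness of $K_{X'}+B'+tG'$ forces $G'$, and therefore $N':=\pi_*N\sim_\Q G'$, to be nef. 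The coefficients of $B'=\pi_*B$ lie in $\Phi$, the divisor $N'$ is integral, and the bigness of $N$ is preserved by this $N$-negative MMP.

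Theorem~\ref{t-eff-bir-nefbig-klt} applied to $(X',B')$ and $N'$---noting that $K_{X'}+B'\equiv 0$ and $N'-(K_{X'}+B')\equiv N'$ are both pseudo-effective and $B'$ has coefficients in $\Phi$---then produces a natural number $m$ depending only on $d,\Phi$ such that $|m'N'+L'|$ and $|K_{X'}+m'N'+L'|$ define birational maps for every $m'\ge m$ and every integral pseudo-effective $L'$ on $X'$. Given an integral pseudo-effective $L$ on $X$, I set $L':=\pi_*L$, which remains integral and pseudo-effective. To transfer birationality back to $X$ I would compare the linear systems on a common log resolution $W\to X$, $W\to X'$: because $\pi$ is $N$-negative, the prime divisors contracted by $\pi$ sit in the stable base locus of $|m'N|$ for large $m'$, and one identifies the relevant spaces of sections to conclude that $|m'N+L|$ and $|K_X+m'N+L|$ define birational maps on $X$. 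The volume bound is then immediate: birationality of $|mN|$ forces $\vol(mN)\ge 1$, whence $\vol(N)\ge \frac{1}{m^d}$.

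The main obstacle I anticipate is the final transfer step, namely the careful matching of the linear systems $|m'N+L|$ on $X$ with $|m'N'+L'|$ on $X'$ across the MMP. Since $L$ is only assumed pseudo-effective and not effective, its pullback on a common resolution need not split cleanly into $q^*L'$ plus an effective exceptional divisor, so checking that the $m'$ arising for $L'$ on $X'$ can be taken uniformly for the original problem on $X$ (independently of $L$) is the most delicate point of the argument.
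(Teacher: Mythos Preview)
Your approach is essentially the paper's: pass to a minimal model of $N$ so that $N$ becomes nef and big, then invoke one of the main birational-boundedness theorems. The only difference is that the paper applies Theorem~\ref{t-eff-bir-nefbig-weil} after first extracting a uniform $\epsilon$-lc bound on $X$ from the global ACC (via [\ref{B-compl}, Lemma 2.48]), whereas you apply Theorem~\ref{t-eff-bir-nefbig-klt}; the paper explicitly remarks in the introduction that this second route is equally valid for the corollary.

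Regarding the transfer step you flag: the paper is exactly as terse as you are---it simply writes ``replacing $X$ with the minimal model we can assume $N$ is nef and big'' and says nothing further. So your concern is legitimate, but it is not a divergence from the paper's argument. Note that for $L=0$ (hence for the volume bound $\vol(N)\ge 1/m^d$) the transfer is immediate, since an $N$-negative MMP preserves $H^0(m'N)$ for every $m'$. For general pseudo-effective integral $L$, on a common resolution $p\colon W\to X$, $q\colon W\to X'$ one has $p^*N-q^*N'\ge 0$ with strictly positive coefficient along the strict transform of every $\pi$-exceptional divisor (this is the minimal-model property), while $p^*L-q^*L'$ is $q$-exceptional but not obviously effective; so identifying $H^0(X,m'N+L)$ with $H^0(X',m'N'+\pi_*L)$ uniformly in $L$ does require a short extra argument that neither you nor the paper writes out.
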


A yet special case of this is when $B=0$, say when $\Phi=\{0\}$, in which case 
$m$ depends only on $d$. 

Note that the corollary only assumes klt singularities 
rather than $\epsilon$-lc. In fact, we will see that such $X$ automatically have $\epsilon$-lc singularities 
for some $\epsilon>0$ depending only on $d$ (this follows from [\ref{HMX2}]), 
so we can apply Theorem \ref{t-eff-bir-nefbig-weil} immediately after taking a minimal model of $N$.

The corollary was proved by Jiang [\ref{Jiang-CY}] in dimension 3 when $X$ is a Calabi-Yau  
with terminal singularities. His proof is entirely different as 
it relies on the Riemann-Roch theorem for 3-folds with terminal singularities. 
More special cases for 3-folds were obtained earlier by Fukuda [\ref{Fukuda}] and 
Oguiso-Peternell [\ref{OP}]. The smooth surface case goes back to Reider [\ref{Reider}]. 
Also see [\ref{KMPP}] for recent relevant results on irreducible symplectic varieties.  
\\

{\textbf{\sffamily{Boundedness of polarised $\epsilon$-log canonical nef pairs.}}}
Given a projective variety $X$ (or more generally pair) polarised by a nef and big integral divisor $N$, 
we would like to find conditions 
which guarantee that $X$ belongs to a bounded family. This is often achieved by controlling positivity and 
singularities. For example, if $X$ is $\epsilon$-lc and $N=-K_X$ is nef and big, then $X$ is bounded [\ref{B-BAB}]. 
On the other hand, if $X$ is $\epsilon$-lc and $N=K_X$ is ample with volume bounded from above, then 
$X$ is bounded (this follows from the results of [\ref{HMX2}]). 
The next result deals with the case when $K_X$ (and more generally $K_X+B$) is nef.

\begin{thm}\label{t-bnd-nef-pol-pairs}
Let $d$ be a natural number and $\epsilon,\delta,v$ be positive real numbers. 
Consider pairs $(X,B)$ and divisors $N$ on $X$ such that  
\begin{itemize}
\item $(X,B)$ is projective $\epsilon$-lc of dimension $d$,

\item the coefficients of $B$ are in $\{0\}\cup [\delta,\infty)$,

\item $K_X+B$ is nef, 

\item $N$ is nef and big and integral, and

\item $\vol(K_X+B+N)\le v$.
\end{itemize}
Then the set of such $(X,\Supp B)$ forms a bounded family. 
If in addition $N\ge 0$, then the set of such $(X,\Supp(B+N))$ forms a bounded family.
\end{thm}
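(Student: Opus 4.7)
The plan is to combine birational boundedness coming from Corollary \ref{cor-eff-bir-nefbig-weil} with the machinery of [\ref{HMX2}] for passing from log birational boundedness to log boundedness.

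\smallskip

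\textbf{Step 1 (birational boundedness).} Since $(X,B)$ is $\epsilon$-lc, so is $X$ itself. Corollary \ref{cor-eff-bir-nefbig-weil} then provides natural numbers $m,l$, depending only on $d,\epsilon$, such that $|m'K_X+l'N+L|$ defines a birational map whenever $m'\ge m$, $l'\ge lm'$, and $L$ is integral pseudo-effective. Choosing $L=\lfloor m'B\rfloor$ (integral and effective, hence pseudo-effective) we see that
\begin{equation*}
D \;:=\; m'K_X+l'N+\lfloor m'B\rfloor
\end{equation*}
is an integral divisor with $|D|$ birational and $D\le m'(K_X+B)+l'N$ as $\R$-divisors. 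As $K_X+B$ and $N$ are both nef, expanding the top self-intersection and using that every mixed product $(K_X+B)^i\cdot N^{d-i}$ is bounded above by $\vol(K_X+B+N)\le v$ gives
\begin{equation*}
\vol(D)\;\le\;\vol\bigl(m'(K_X+B)+l'N\bigr)\;\le\;(m'+l')^d\,v,
\end{equation*}
which is bounded in terms of $d,\epsilon,v$. Thus $h^0(D)$ and the degree of the image of $\phi_{|D|}$ in projective space are bounded, so the pairs $(X,\Supp B)$ (and, when $N\ge 0$, the pairs $(X,\Supp(B+N))$) form a log birationally bounded family.

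\smallskip

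\textbf{Step 2 (from birational to actual boundedness).} The nonzero coefficients of $B$ lie in $[\delta,1]$, hence in a DCC set. Together with $(X,B)$ being $\epsilon$-lc and $K_X+B$ being nef---so that $(X,B)$ is its own log canonical model---and the volume bound $\vol(K_X+B+N)\le v$, the log birational boundedness from Step 1 upgrades, via the boundedness-of-log-canonical-models machinery of [\ref{HMX2}], to log boundedness of $(X,\Supp B)$. For the stronger conclusion when $N\ge 0$, one perturbs to $(X,B+tN)$ for a small rational $t\in(0,1]$ chosen uniformly using the $\epsilon$-lc hypothesis and ACC for lc thresholds: the perturbed pair is $\epsilon'$-lc with coefficients in a DCC set, $K_X+B+tN$ is nef, and the volume is still at most $v$. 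Applying the same reasoning to this pair bounds $(X,\Supp(B+N))$.

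\smallskip

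The main obstacle is Step 2: converting log birational boundedness into genuine log boundedness. The standard strategy is to take a common log resolution with the universal family of a bounded ``ambient'' model, run a relative MMP to produce the log canonical model of $(X,B)$ on this common resolution, and verify that the resulting discrepancies and the extra divisorial components accumulated are uniformly controlled. The key technical ingredients are ACC for lc thresholds and global ACC, both from [\ref{HMX2}], and the lower bound $\delta>0$ on the nonzero coefficients of $B$ is indispensable here: without it the number of components of $B$ could blow up while the volume remains bounded, violating log boundedness.
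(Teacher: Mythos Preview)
Your Step 1 is fine, but Step 2 has a genuine gap. The result you want from [\ref{HMX2}] (their Theorem 1.6) requires $K_X+B$ to be \emph{ample} with bounded volume, not merely nef; so you cannot apply it directly to $(X,B)$, and the sentence ``$(X,B)$ is its own log canonical model'' is misleading. The paper's fix is to build an auxiliary effective divisor $M\sim m(K_X+B+N)+lmN$ (an element of the birational linear system plus $mB$), show that $K_X+B+\tfrac{t}{2}M$ is ample with bounded volume, that $(X,B+\tfrac{t}{2}M)$ is $\tfrac{\epsilon}{2}$-lc, and that the nonzero coefficients of $B+\tfrac{t}{2}M$ are bounded below; then [\ref{HMX2}, Theorem 1.6] applies to this auxiliary pair and gives boundedness of $(X,\Supp(B+M))$, hence of $(X,\Supp B)$ and, when $N\ge 0$, of $(X,\Supp(B+N))$ (since one arranges $M\ge N$).

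The heart of this is getting a \emph{uniform} positive lower bound on the lc threshold $t$ of $M$ with respect to $(X,B)$. Your appeal to ``ACC for lc thresholds'' does not do this: ACC rules out accumulation from above, but does not by itself prevent the thresholds from tending to $0$ across the family. The paper obtains the lower bound by first using the birational boundedness (your Step 1, refined via [\ref{B-compl}, Proposition 4.4]) to compare with a fixed log smooth couple $(\overline{X},\overline{\Sigma})$, and then invoking [\ref{B-compl}, Proposition 4.2] together with the negativity lemma. The same applies to your perturbation $(X,B+tN)$: a uniform $t$ is exactly what Step 3 of the paper's proof establishes, and it needs the birational boundedness input, not just ACC. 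Finally, the case where $K_X+B+N$ is nef and big but not ample is handled separately in the paper by passing to the contraction it defines and applying [\ref{B-lcyf}, Theorem 1.3]; your write-up does not address this reduction.
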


A consequence of this is the following.

\begin{cor}\label{cor-bnd-cy}
Let $d$ be a natural number, $v$ be a positive real number, and $\Phi\subset [0,1]$ be a DCC set of real numbers. 
Consider pairs $(X,B)$ and divisors $N$ on $X$ satisfying the following: 
\begin{itemize}
\item $(X,B)$ is a klt Calabi-Yau pair of dimension $d$,

\item the coefficients of $B$ are in $\Phi$, and 

\item $N$ is nef and big and integral, and 

\item $\vol(N)\le v$.

\end{itemize}
Then the set of such $(X, \Supp B)$ forms a bounded family.
If in addition $N\ge 0$, then the set of such $(X,\Supp(B+N))$ forms a bounded family.
\end{cor}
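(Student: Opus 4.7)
The strategy is to deduce this as a direct corollary of Theorem \ref{t-bnd-nef-pol-pairs}, once its hypotheses are verified with uniform constants depending only on $d$, $\Phi$, $v$. Two of the non-trivial hypotheses are immediate from the Calabi-Yau assumption: since $K_X+B\sim_{\Q}0$, the divisor $K_X+B$ is nef (indeed numerically trivial), and $\vol(K_X+B+N)=\vol(N)\le v$. Furthermore, since $\Phi\subset[0,1]$ is DCC, the set $\Phi\setminus\{0\}$ (if nonempty) has a positive minimum $\delta$, so the coefficients of $B$ automatically lie in $\{0\}\cup[\delta,\infty)$.

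The only remaining hypothesis of Theorem \ref{t-bnd-nef-pol-pairs} is the $\epsilon$-lc condition. Here I would establish that there exists $\epsilon=\epsilon(d,\Phi)>0$ such that every pair $(X,B)$ as in the statement is automatically $\epsilon$-lc. As remarked by the author following Corollary \ref{cor-cy}, this uniform lower bound on the mld for klt Calabi-Yau pairs with DCC coefficients is a consequence of results of Hacon-M$\rm ^c$Kernan-Xu [\ref{HMX2}]; the mechanism is the global ACC for numerically trivial lc pairs, which prevents an accumulation of mlds at $0$: if a sequence $(X_i,B_i)$ had mlds tending to $0$, one extracts exceptional divisors $E_i$ and perturbs $B_i$ along them to produce numerically trivial lc pairs whose coefficients converge strictly, contradicting the global ACC.

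With $\epsilon$ fixed in terms of $d$ and $\Phi$, Theorem \ref{t-bnd-nef-pol-pairs} applies to the family with parameters $(d,\epsilon,\delta,v)$ and gives both conclusions, the second assertion for the case $N\ge 0$ being delivered verbatim by that theorem. The main obstacle in this plan is precisely the uniform $\epsilon$-lc bound from Hacon-M$\rm ^c$Kernan-Xu; once that is in hand, the rest is a bookkeeping matter of matching hypotheses.
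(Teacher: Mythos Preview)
Your proposal is correct and follows essentially the same approach as the paper: obtain a uniform $\epsilon$-lc bound for klt Calabi-Yau pairs with DCC coefficients (the paper cites [\ref{B-compl}, Lemma 2.48], which packages the global ACC argument from [\ref{HMX2}] you describe), note that $\vol(K_X+B+N)=\vol(N)\le v$, and apply Theorem \ref{t-bnd-nef-pol-pairs}. The paper's proof additionally remarks that the cited lemma requires $(X,0)$ klt, which is arranged by passing to a small $\Q$-factorialisation, but this is a minor technical point and your argument is otherwise identical.
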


The point is that $(X,B)$ is automatically $\epsilon$-lc for some $\epsilon>0$ depending only on $d,\Phi$, 
so we can apply Theorem \ref{t-bnd-nef-pol-pairs}. Note that if we relax the nef and big property 
of $N$ to only big, then  $X$ is birationally bounded as we can apply the 
corollary to the minimal model of $N$. The special case of the corollary in which $N$ is Cartier and $B=0$ 
was proved earlier in [\ref{MST}, Corollary 10].

The corollary is crucially used  in Odaka [\ref{Odaka}] to get partial compactification of moduli spaces of Calabi-Yau varieties polarised by ample line bundles (see also [\ref{Odaka-2}]).\\

{\textbf{\sffamily{Boundedness of polarised semi-log canonical Calabi-Yau pairs.}}}
The above boundedness statements are not enough for construction of moduli spaces. 
The problem is that limits of families of $\epsilon$-lc varieties are not necessarily 
$\epsilon$-lc. In fact the limit may not even be irreducible. 
We want to address this problem by showing boundedness 
of appropriate classes of Calabi-Yau pairs. 
An \emph{slc Calabi-Yau pair} is a projective slc pair $(X,B)$ such that $K_X+B\sim_\R 0$. 
The desired boundedness is a consequence of the next result on lc thresholds.

\begin{thm}\label{t-lct-nef-big-slc-cy}
Let $d$ be a natural number, $v$ be a positive real number, and $\Phi\subset [0,1]$ be a DCC set of real numbers. 
Then there is a positive real number $t$ depending only on $d,v,\Phi$ satisfying the following.
Assume that   
\begin{itemize}
\item $(X,B)$ is an slc Calabi-Yau pair of dimension $d$, 

\item the coefficients of $B$ are in $\Phi$,

\item $N\ge 0$ is a nef integral divisor on $X$,

\item $(X,B+uN)$ is slc for some real number $u>0$, and 

\item for each irreducible component $S$ of $X$, $N|_S$ is big with $\vol(N|_S)\le v$.
\end{itemize}
Then $(X,B+tN)$ is slc. 
\end{thm}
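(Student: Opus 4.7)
The plan is to pass to the normalization of $X$ to reduce to the normal irreducible case, and then proceed by induction on the dimension $d$: the klt subcase is handled via the boundedness result of Corollary \ref{cor-bnd-cy}, while the strictly lc subcase is handled via adjunction to a minimal non-klt centre together with the inductive hypothesis.

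First I would reduce to the case where $X$ is normal and irreducible. Let $\pi\colon X^\nu\to X$ be the normalization with irreducible components $S_1,\dots,S_k$. By slc adjunction,
\begin{equation*}
(K_X+B+tN)|_{S_j}=K_{S_j}+B_{S_j}+tN|_{S_j},
\end{equation*}
where $B_{S_j}$ is the different (including the conductor), and $(X,B+tN)$ is slc if and only if each $(S_j,B_{S_j}+tN|_{S_j})$ is lc. Since $(X,B)$ is slc Calabi-Yau, each $(S_j,B_{S_j})$ is an lc Calabi-Yau pair whose boundary coefficients lie in a DCC set $\Phi'$ depending only on $\Phi$ and $d$, and each $N|_{S_j}$ is nef and big integral with $\vol(N|_{S_j})\le v$. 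The auxiliary hypothesis $(X,B+uN)$ slc for some $u>0$ ensures that $N$ shares no component with the coefficient-one part of $B$. It therefore suffices to produce a uniform $t>0$, depending only on $d,v,\Phi'$, bounding the log canonical threshold $\lct(S,B_S;N|_S)$ from below in the normal irreducible lc Calabi-Yau setting.

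Next I would run an induction on $d$. In the klt subcase, Corollary \ref{cor-bnd-cy} (applied with $N\ge 0$) puts $(S,\Supp(B_S+N|_S))$ in a bounded family depending only on $d,v,\Phi$. Within such a bounded family, the coefficients of $B_S$ lie in a finite subset of $\Phi$ by the global ACC for Calabi-Yau pairs [HMX2], and $N|_S$ has bounded Cartier index; combined with ACC for log canonical thresholds [HMX2] this forces $\lct(S,B_S;N|_S)$ to take values in a finite positive set, giving the uniform lower bound. In the strictly lc subcase, pass to a $\Q$-factorial dlt modification $(\tilde S,B_{\tilde S})\to(S,B_S)$, which preserves the lct, and take a minimal non-klt centre $W$; then $\dim W<d$. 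By Kollár's adjunction, $(W,B_W)$ is an lc Calabi-Yau pair in dimension $<d$ with boundary coefficients in a DCC set depending only on $\Phi$ and $d$, polarized by the nef integral divisor $\tilde N|_W$. Assuming one can arrange $\tilde N|_W$ to be big with bounded volume, the inductive hypothesis supplies a uniform $t'>0$ with $(W,B_W+t'\tilde N|_W)$ lc; inversion of adjunction then lifts this to an lct bound for $(\tilde S,B_{\tilde S}+t'\tilde N)$ along a neighbourhood of $W$. Combining over all minimal non-klt centres and with the klt-case bound away from them yields the desired uniform $t(d,v,\Phi)>0$.

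The hardest part is the inductive step in the strictly lc case, specifically arranging the restriction $\tilde N|_W$ to a minimal non-klt centre $W$ to be big with volume bounded in terms of $d,v,\Phi$ — bigness of a restriction is not automatic, so this requires either a careful choice of $W$ within the strata of $B_{\tilde S}^{=1}$ or an auxiliary reduction (for instance, further adjoint restriction to a sub-stratum where bigness holds, or a perturbation argument combining $N$ with an ample). A secondary subtlety is the precise form of Kollár's adjunction here, needed to verify that the coefficients of the different $B_W$ lie in a DCC set depending only on $\Phi$ and $d$, and to track the volume bound under restriction.
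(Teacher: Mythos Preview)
Your reduction to the normal irreducible case matches the paper, but after that your strategy diverges and the strictly lc branch has a genuine gap. The paper does not split into klt versus strictly lc and does not use induction on dimension via adjunction to non-klt centres. Instead, the key observation is that by the global ACC (via [\ref{B-compl}, Lemma 2.48]) there is an $\epsilon>0$ depending only on $d,\Phi$ such that every log discrepancy of $(X,B)$ is either $0$ or $\ge\epsilon$. After passing to a dlt model, one then extracts all prime divisors $D$ with $a(D,X,0)<\epsilon$; these necessarily have $a(D,X,B)=0$, so they enter $B''$ with coefficient $1$, the coefficients of $B''$ stay in $\Phi\cup\{1\}$, and $N''$ is still the birational transform of $N$ (hence integral). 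Crucially, the new variety $X''$ is itself $\epsilon$-lc, with no klt hypothesis on the pair needed. Then Corollary~\ref{cor-eff-bir-nefbig-general} applies directly to give birational boundedness of $(X'',\Supp(B''+N''))$, and a coefficient estimate on the resulting bounded log smooth model yields an explicit $t=\epsilon/c$. No induction on $d$ is used.

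Your inductive step fails at exactly the point you flag: the restriction $\tilde N|_W$ to a minimal non-klt centre need not be big. For a concrete obstruction, take $X$ the blow-up of $\PP^2$ at a point with exceptional curve $E$, choose $B=E+B'$ with $B'\in|3H-2E|$ general so that $(X,B)$ is an lc Calabi-Yau pair with unique non-klt centre $E$, and let $N$ be the strict transform of a general line (so $N\sim H$ is nef and big); then $N|_E\equiv 0$. Your suggested fixes (careful choice of $W$, perturbation by an ample) do not resolve this without introducing dependence on the particular $X$. Moreover, even granting a bound near the non-klt locus via inversion of adjunction, your ``klt-case bound away from them'' is not available: Corollary~\ref{cor-bnd-cy} requires a globally klt Calabi-Yau pair, not merely a locally klt open subset, so you have no mechanism to bound the lc threshold on the complement of the non-klt locus. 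The paper's $\epsilon$-lc extraction trick is precisely what sidesteps both issues at once.
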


The key point is that $t$ does not depend on $u$. 

A \emph{polarised slc Calabi-Yau pair} consists of a connected slc Calabi-Yau pair 
$(X,B)$ and an ample integral divisor $N\ge 0$ such that $(X,B+uN)$ is  slc for some real number $u>0$. 
We refer to such a pair by saying $(X,B),N$ is a polarised slc Calabi-Yau pair.

\begin{cor}\label{cor-bnd-semi-lc-pol-CY}
Let $d$ be a natural number, $v$ be a positive real number, and $\Phi\subset [0,1]$ be a DCC set of rational numbers. 
Consider $(X,B)$ and $N$ such that 
\begin{itemize}
\item $(X,B),N$ is a polarised slc Calabi-Yau pair of dimension $d$,

\item the coefficients of $B$ are in $\Phi$, and 

\item $\vol(N)=v$.
\end{itemize}
Then the set of such $(X,\Supp(B+N))$ forms a bounded family.
\end{cor}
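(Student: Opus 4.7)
The plan is to use the uniform lc threshold of Theorem \ref{t-lct-nef-big-slc-cy} to convert the polarised Calabi--Yau pair into an slc stable pair of log general type with controlled invariants, and then appeal to the known boundedness of such pairs.

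I would first verify that $(X,B),N$ satisfies the hypotheses of Theorem \ref{t-lct-nef-big-slc-cy}: the divisor $N\ge 0$ is nef because it is ample; some $(X,B+uN)$ is slc by the polarisation assumption; and the ampleness of $N$ forces $N|_S$ to be big on every irreducible component $S$ of $X$, with $\vol(N|_S)\le \vol_X(N)=v$. The theorem then produces a positive real number $t$ depending only on $d,v,\Phi$ such that $(X,B+tN)$ is slc.

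Setting $\Delta := B+tN$, the Calabi--Yau condition $K_X+B\sim_\R 0$ yields
\[
K_X+\Delta\;\sim_\R\; tN,
\]
which is ample, so $(X,\Delta)$ is an slc pair with ample log canonical divisor. Its log canonical volume is $\vol(tN)=t^d v$; its dimension is $d$; and because $N$ is integral, the coefficients of $\Delta$ lie in the DCC set $\Phi+t\Z_{\ge 0}$. All three invariants depend only on $d,v,\Phi$.

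The final step is to invoke the boundedness of slc stable pairs: with dimension, log canonical volume, and a DCC set of allowed coefficients fixed, the family of slc pairs $(X,\Delta)$ with $K_X+\Delta$ ample is bounded. Since $\Supp\Delta=\Supp(B+N)$, this yields boundedness of $(X,\Supp(B+N))$. I expect the main obstacle to lie in this last step: the boundedness of slc (as opposed to lc) stable pairs has to be reduced to the normalisation, where one applies Hacon--McKernan--Xu's boundedness for lc pairs of general type with DCC coefficients, and then glued back via Koll\'ar's gluing theory to control the non-normal structure; tracking the DCC property of coefficients through adjunction is the delicate bookkeeping inside this reduction.
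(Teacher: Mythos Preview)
Your proposal is correct and follows essentially the same approach as the paper: apply Theorem \ref{t-lct-nef-big-slc-cy} to find a uniform $t$ with $(X,B+tN)$ slc, observe that $K_X+B+tN\sim_\Q tN$ is ample with volume $t^dv$ and that the coefficients of $B+tN$ lie in a fixed DCC set, then invoke boundedness of stable pairs. Your worry about the final step is unnecessary: the paper simply cites [\ref{HMX3}, Theorem 1.1], which already treats slc stable pairs directly, so no reduction to the normalisation or gluing argument is needed on your part.
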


This is a consequence of Theorem \ref{t-lct-nef-big-slc-cy} and the main result of 
Hacon-M$\rm ^c$Kernan-Xu [\ref{HMX3}] as we can pick a rational number $t>0$ depending 
only on $d,v,\Phi$ so that $(X,B+tN)$ is a stable pair with 
$$
\vol(K_X+B+tN)=t^dv.
$$

\bigskip

{\textbf{\sffamily{Moduli of polarised slc Calabi-Yau pairs.}}}
Combining the above boundedness results (\ref{t-lct-nef-big-slc-cy}, \ref{cor-bnd-semi-lc-pol-CY}) 
with the moduli theory of stable pairs [\ref{kollar-moduli-gen-type}] implies existence of  
projective coarse moduli spaces for polarised slc Calabi-Yau pairs, fixing appropriate invariants. 
Given a natural number $d$ and positive rational numbers $c,v$, one considers those polarised slc Calabi-Yau pairs $(X,B),N$ such that $\dim X=d$, $B=cD$ where $D$ is an integral divisor, and $\vol(N)=v$. 
One needs to consider a slightly more general setting where $N$ may not be integral but $N=cE$ for an integral divisor $E$.
Then such pairs admit a projective coarse moduli space.
The precise statement and its proof will be discussed in a forthcoming work on moduli of varieties.

\bigskip


{{\textbf{\sffamily{Plan of the paper.}}}}
In Section 2 we collect some preliminary definitions and results. 
In Section 3 we study non-klt centres and adjuction on such centres in depth establishing 
results that are key to the subsequent sections. In Section 4 we treat birational 
boundedness of linear systems on $\epsilon$-lc varieties which will form the basis 
for subsequent sections, in particular, we prove more general forms of 
\ref{t-eff-bir-nefbig-weil} and \ref{cor-eff-bir-nefbig-weil} together with \ref{cor-cy}. 
In Section 5 we treat birational boundedness on  
pseudo-effective pairs proving a more general form of \ref{t-eff-bir-nefbig-klt}. 
In Section 6 we establish boundedness results including more general forms of 
\ref{t-bnd-nef-pol-pairs} and \ref{t-lct-nef-big-slc-cy} together with \ref{cor-bnd-semi-lc-pol-CY}.  
Finally in Section 7 we present some examples, remarks, and conjectures. 
\\

{{\textbf{\sffamily{Sketch of some proofs.}}}} 
We sketch proofs of some selected results. 
We start with \ref{t-eff-bir-nefbig-weil}. 
Given a polarised variety $X,N$, the overall strategy for proving birational boundedness is to pick $m$ so that $mN$ has   
sufficiently large volume which ensures that we can create covering families of 
non-klt centres; then one tries to cut down these centres in order to decrease their dimension 
until they are isolated points; 
then one uses vanishing theorems to lift sections from these points in a way that general points 
can be separated hence $|mN|$ defines a birational map. In the end we need to bound the number $m$.

Variants of this strategy have been used numerous times in the literature yielding many fundamental results. 
It originates in the study of the Fujita conjecture by various people and developed over the years 
by many people. Although this is a powerful strategy, 
however, applying it is by no means straightforward. The main difficulty is in decreasing the dimension of 
the non-klt centres. For this we need to somehow apply induction to understand these centres 
and this depends on the setting we start with. 
For example, say if $X$ is of general type (more precisely, its resolution is of general type) 
and $N=K_X$, then the non-klt centres are automatically 
of general type and this is enough to do the induction. But if $X$ is Fano and $N=-K_X$, then 
the non-klt centres may or may not be Fano and this causes problems but the situation can still be managed using the theory 
of complements. 

In the setting of Theorem \ref{t-eff-bir-nefbig-weil}, we need to employ new ideas in addition to the existing 
methods to do induction. We focus on showing that $|mN|$ defines a birational map for some bounded $m$. 
To achieve this we want to find bounded $m$ so that for general closed points  $x,y\in X$ 
(possibly switching them), there is $0\le \Delta\sim_\Q mN-K_X$ such that 
$(X,\Delta)$ is lc at $x$ and $\{x\}$ is an isolated lc centre but $(X,\Delta)$ is not klt at $y$. 
Then using vanishing theorems we get $\alpha\in H^0(X,mN)$ with $\alpha(x)\neq 0$ but $\alpha(y)=0$. 
Thus $|mN|$ defines a birational map. 

To proceed we reduce to the case when $N-K_X$ and $N+K_X$ are big. Replacing $N$ with a bounded multiple 
we can ensure $N-K_X$ is big. However, to ensure $N+K_X$ is big we need to use the BAB [\ref{B-BAB}, Theorem 1.1] 
except in some cases, e.g. when $N$ is a multiple of $-K_X$.

Pick $m$ such that ${\rm vol}(mN-K_X)>(2d)^d$. Initially we take $m$ minimal with this property.
There is $\le \Delta\sim_\Q mN-K_X$ such that 
$(X,\Delta)$ is lc at $x$ with a unique lc centre $G$ but $(X,\Delta)$ is not klt at $y$.
If always $\dim G=0$, then ${|mN|}$ defines a birational map by vanishing theorems as explained above. 
Eventually we need to bound $m$ from above. 

The hard part is when $\dim G>0$. In this case we need to show that 
${\rm vol}(mN|_{G})$ is bounded from below away from zero in order to replace $G$ and decrease dimension. 
There is a kind of adjunction, that is, we can write 
$$
mN|_F\sim_\Q (K_X+\Delta)|_F\sim_\Q K_F+\Theta_F+P_F
$$ 
where $F$ is the normalisation of $G$, $\Theta_F$ is a boundary, and $P_F$ is big. 

Next we reduce to the case when $F$ is birational to a bounded variety $F'$. 
If $(F,\Theta_F+P_F)$ has bad singularities, then we can bound ${\rm vol}(mN|_{G})$ by 
comparing singularities with those on $F'$.
This allows to reduce to the case when $(F,\Theta_F+P_F)$ is $\frac{\epsilon}{2}$-lc. 

To treat the remaining case we show that $N|_F$ 
is integral up to a bounded multiple, hence we can assume $mN|_F$ is integral. 
Since $mN|_F-K_F$ is big, applying induction on dimension, ${|mN|_F|}$ defines a birational map.
Therefore, ${\rm vol}(mN|_{G})\ge 1$ as desired. Carrying out the ideas of 
this paragraph is one of the main innovations of this paper.
Again we eventually need to show $m$ is bounded.

Now we turn to sketch of proofs of \ref{t-lct-nef-big-slc-cy} and \ref{cor-bnd-semi-lc-pol-CY}. 
As mentioned earlier, the former implies the latter, so we will focus on \ref{t-lct-nef-big-slc-cy}. 
Since we want to show that $(X,B+tN)$ is slc for some $t$ bounded from below away from zero, 
we can normalise $X$ and then replace it with any of its irreducible components, hence 
we can assume $(X,B)$ is lc. 

Next replace $(X,B)$ with a $\Q$-factorial dlt model and $N$ with its pullback. It turns out that 
$N$ is still integral. This is a consequence of the fact that $\Supp N$ does not 
contain any non-klt centre of $(X,B)$. 

Pick $\epsilon>0$ sufficiently small. 
Extract all the prime divisors $D$ over $X$ with log discrepancy $a(D,X,0)\le \epsilon$, say via $Y\to X$. 
Replace $X$ with $Y$ and $K_X+B,N$ with their pullbacks. Again it turns out that $N$ is still integral. 
Now $X$ is $\epsilon$-lc, $N$ is integral nef and big, and $N-K_X\sim_\Q N+B$ is big. 

Applying birational boundedness, that is, \ref{t-eff-bir-nefbig-weil}, we find a bounded $m$ such that 
${|mN|}$ defines a birational map. Next we show that $(X,\Supp (B+N))$ is birationally bounded.
Comparing singularities on $X$ and the bounded model ensures that $t$ exists as desired.\\

{{\textbf{\sffamily{Acknowledgements.}}}}
This work was mainly done at Cambridge University with the support of a grant of the Royal Society and partially done while visiting the Yau Mathematical Sciences Center, Tsinghua University in August 2019. Revision was done at Tsinghua University in 2022. Thanks to Yifei Chen, Jingjun Han and participants of a reading workshop at Fudan University, Xiaowei Jiang, and the referee for their helpful comments.


\section{\bf Preliminaries}

We work over an algebraically closed field $k$. All the varieties in this paper are quasi-projective over $k$ unless stated otherwise.

\subsection{Contractions}
By a \emph{contraction} we mean a projective morphism $f\colon X\to Y$ of varieties 
such that $f_*\mathcal{O}_X=\mathcal{O}_Y$ ($f$ is not necessarily birational). In particular, 
$f$ is surjective and has connected fibres.

\subsection{Divisors}
Let $X$ be a variety, and let $M$ be an $\R$-divisor on $X$. 
We denote the coefficient of a prime divisor $D$ in $M$ by $\mu_DM$.  Writing $M=\sum m_iM_i$ where 
$M_i$ are the distinct irreducible components, the notation $M^{\ge a}$ means 
$\sum_{m_i\ge a} m_iM_i$, that is, we ignore the components with coefficient $<a$. One similarly defines $M^{\le a}, M^{>a}$, and $M^{<a}$. 
    
Now let $f\colon X\to Z$ be a morphism to another variety. If $N$ is an $\R$-Cartier 
$\R$-divisor on $Z$, we sometimes denote $f^*N$ by $N|_X$. 

For a birational map $X\bir X'$ (resp. $X\bir X''$)(resp. $X\bir X'''$)(resp. $X\bir Y$) 
whose inverse does not contract divisors, and for 
an $\R$-divisor $M$ on $X$ we usually denote the pushdown of $M$ to $X'$ (resp. $X''$)(resp. $X'''$)(resp. $Y$) 
by $M'$ (resp. $M''$)(resp. $M'''$)(resp. $M_Y$).

Recall that an $\R$-Cartier $\R$-divisor $M$ on a variety $X$ projective over $Z$ 
is said be big if we can write $M\sim_\R A+P$ where $A$ is ample$/Z$ and $P\ge 0$.
When $M$ is $\Q$-Cartier we can replace $\sim_\R$ with $\sim_\Q$ and assume $A,P$ are $\Q$-divisors.

\begin{lem}\label{l-restriction-big-divisors}
Let $f\colon X\to Z$ be a projective morphism between normal varieties. Then there is a non-empty open subset 
$U$ of $Z$ such that if $M$ is any $\R$-Cartier $\R$-divisor on $X$ which is big over $Z$, then 
$M|_F$ is big for every fibre of $f$ over closed points in $U$. 
\end{lem}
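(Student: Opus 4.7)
The plan is to choose $U$ using generic flatness of $f$ alone, independent of any divisor, and then to use Grothendieck's upper semi-continuity of $h^0$ in flat families to propagate bigness from the generic fibre to every closed fibre over $U$ uniformly in $M$. Concretely, by generic flatness there is a nonempty open $U\subset Z$ such that $f|_{f^{-1}(U)}\colon f^{-1}(U)\to U$ is flat with geometrically integral normal fibres of constant dimension; this $U$ is forced on us by $f$ alone and takes no divisor as input, which is precisely the uniformity the lemma demands.

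Next I would treat the case where $M$ is $\Q$-Cartier. Writing $M\sim_\Q A+P/Z$ with $A$ ample over $Z$ and $P\ge 0$, restriction to the generic fibre $X_\eta$ yields $M_\eta\sim_\Q A_\eta+P_\eta$ with $A_\eta$ ample and $P_\eta\ge 0$, so $M_\eta$ is big on $X_\eta$, say of dimension $d$. Picking $m$ divisible enough that $mM$ is Cartier, the line bundle $\mathcal{O}_X(mM)$ is flat over $U$, so Grothendieck's upper semi-continuity theorem applied to $f|_{f^{-1}(U)}$ tells us that $z\mapsto h^0(X_z,\mathcal{O}(mM)|_{X_z})$ is upper semi-continuous on $U$. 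Because $U$ is irreducible with generic point $\eta$, the closed set $\{z\in U : h^0(X_z,\mathcal{O}(mM)|_{X_z}) \ge h^0(X_\eta,\mathcal{O}(mM)_\eta)\}$ contains $\eta$ and hence equals all of $U$. Combined with the bigness estimate $h^0(X_\eta,\mathcal{O}(mM)_\eta) \ge c\,m^d$ for some $c>0$ and $m$ large enough, this yields $\vol(M|_{X_z})>0$ for every closed $z\in U$, so $M|_{X_z}$ is big.

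For general $\R$-Cartier $M$ big over $Z$, I would reduce to the $\Q$-Cartier case via convexity in the finite-dimensional relative N\'eron-Severi space: the big cone is open and its rational points are dense, so the class of $M$ may be expressed as a finite positive $\R$-combination of $\Q$-Cartier classes each big over $Z$, and positive combinations of big classes are big. The main obstacle is precisely the $M$-independence of $U$; it is resolved by the fact that Grothendieck's semi-continuity delivers an $h^0$ lower bound that holds simultaneously on every fibre over $U$, with no divisor-dependent shrinking required.
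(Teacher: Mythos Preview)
Your proposal is correct and follows the same core strategy as the paper: fix $U$ using only the geometry of $f$, reduce from $\R$-Cartier to $\Q$-Cartier, and then invoke upper semi-continuity of $h^0$ to transfer bigness from the generic fibre to every closed fibre over $U$. The paper implements this slightly differently---it first passes to a resolution $\phi\colon X'\to X$ and uses generic smoothness of $X'\to Z$ (rather than generic flatness of $f$ on the singular $X$), and it handles the $\R\to\Q$ reduction by writing $\phi^*M=\sum r_iM_i'$ with $M_i'$ Cartier and perturbing the real coefficients $r_i$ downward to rationals so that the difference is effective (rather than appealing to openness of the relative big cone and convex combinations). These are tactical variations on the same argument; your route is arguably cleaner in avoiding the resolution, while the paper's coefficient-perturbation is more elementary than invoking finite-dimensionality of $N^1(X/Z)_\R$.
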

\begin{proof}
Let $\phi\colon X'\to X$ be a resolution and let $f'\colon X'\to Z$ be the induced morphism. 
Let $U$ be a non-empty open subset of $Z$ over which $f'$ is smooth. Now let 
$M$ be an $\R$-Cartier $\R$-divisor on $X$ which is big over $Z$. 
Let $F'$ be a fibre 
of $f'$ over a closed point of $U$ and let $G'$ be the generic fibre of $f'$. 

We can write 
$M':=\phi^*M=\sum r_iM_i'$ where $r_i\in \R$ and $M_i$ are Cartier divisors. Moving the $M_i$ 
we can assume that their support do not contain any component of $F'$. Since $M'$ is big over $Z$, we can decrease its  
coefficients slightly so that the resulting divisor, say $L'$, is $\Q$-Cartier and also big over $Z$.
Take $n\in \N$ so that $nL'$ is Cartier. Then for any $m\in \N$ we have 
$$
h^0(mnL'|_{F'})\ge h^0(mnL'|_{G'})
$$  
by the upper-semi-continuity of cohomology, where the second $h^0$ is dimension over the function field of $Z$. 

Since $L'$ is big over $Z$, $h^0(mnL'|_{G'})$ grows like $m^{\dim G'}$, hence 
$h^0(mnL'|_{F'})$ grows like $m^{\dim F'}$ which implies that $L'|_{F'}$ is big. 
Since the support of $M'-L'\ge 0$ does not contain any component of $F'$, we see that $M'|_{F'}$ is big. 
This in turn implies that $M|_F$ is also big where $F$ is the fibre of $f$ corresponding to $F'$.

\end{proof}

\begin{lem}\label{l-perturbation-R-div}
Let $X$ be a normal variety and $M$ be an $\R$-Cartier $\R$-divisor on $X$. Then 
there is a $\Q$-Cartier $\Q$-divisor $A$ such that $\Supp A=\Supp M$ and $M-A$ has 
arbitrarily small coefficients.
\end{lem}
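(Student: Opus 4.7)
The plan is to unpack the definition of $\R$-Cartier and reduce the statement to a density argument in a rational linear subspace of $\R^k$. Since $M$ is $\R$-Cartier, I can write $M=\sum_{i=1}^{k}r_iC_i$ with $C_i$ Cartier and $r_i\in\R$. Let $D_1,\dots,D_n$ be the prime divisors appearing in any of the $C_i$, and write $C_i=\sum_j c_{ij}D_j$ with $c_{ij}\in\Z$. Then the coefficient of $D_j$ in $M$ is $m_j=\sum_i c_{ij}r_i$. Let $J_0=\{j:m_j=0\}$ and $J_1=\{j:m_j\neq 0\}$, so that $\Supp M=\bigcup_{j\in J_1}D_j$.

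For any choice of real numbers $r_i'$, the divisor $A=\sum r_i'C_i$ has coefficient $\sum_i c_{ij}r_i'$ along $D_j$. So to achieve $\Supp A=\Supp M$ I need $\sum_i c_{ij}r_i'=0$ for every $j\in J_0$ and $\sum_i c_{ij}r_i'\neq 0$ for every $j\in J_1$. The equalities cut out a linear subspace $V\subseteq\R^k$ defined over $\Q$ (the coefficients $c_{ij}$ being integers), and $(r_1,\dots,r_k)$ lies in $V$ by construction. The inequalities define an open condition around $(r_i)$ in $V$. Since $V$ is $\Q$-defined, $V\cap\Q^k$ is dense in $V$, so I can pick $(r_i')\in V\cap\Q^k$ arbitrarily close to $(r_i)$, automatically landing in the open set where the $J_1$ inequalities persist.

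Setting $A=\sum r_i'C_i$ then gives a $\Q$-Cartier $\Q$-divisor with $\Supp A=\Supp M$, and the difference
$$
M-A=\sum_j\Bigl(\sum_i c_{ij}(r_i-r_i')\Bigr)D_j
$$
has coefficients controlled by $\max_i|r_i-r_i'|$ times a constant depending only on the fixed integers $c_{ij}$, hence can be made arbitrarily small.

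The only nontrivial input is the density of $\Q$-points in a $\Q$-defined linear subspace of $\R^k$, which is elementary; the mild obstacle is just being careful that the rational approximation preserves exactly the support of $M$ rather than enlarging it (which is why I work with the subspace $V$ rather than approximating each $r_i$ independently).
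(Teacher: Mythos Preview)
Your proof is correct and takes a somewhat different route from the paper. The paper first reduces to a \emph{minimal} expression $M=\sum_{i=1}^{l}r_iM_i$ with $M_i$ Cartier, argues that minimality forces the $r_i$ to be $\Q$-linearly independent, and then deduces $\Supp M_i\subseteq\Supp M$ for every $i$; once this containment is known, one can perturb each $r_i$ independently to a nearby rational number without enlarging the support. Your approach avoids the minimality step entirely: starting from an arbitrary expression $M=\sum r_iC_i$, you observe that the vanishing conditions at the components outside $\Supp M$ cut out a $\Q$-defined linear subspace $V\subseteq\R^k$ containing $(r_i)$, and then approximate by rational points of $V$. Both arguments are elementary; the paper's version yields the extra structural byproduct that $\Supp M_i\subseteq\Supp M$ in a minimal presentation, while yours is arguably more direct and makes the role of the rational structure transparent.
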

\begin{proof} 
Since $M$ is $\R$-Cartier, we can write $M=\sum_1^l r_iM_i$ where $r_i$ are real numbers and $M_i$ 
are Cartier divisors. Assume that $l$ is minimal. Then the $r_i$ are 
$\Q$-linearly independent: if not, then say $r_1=\sum_2^l \alpha_ir_i$ where $\alpha_i$ are rational numbers, so 
$$
M=\sum_2^l r_i(\alpha_iM_1+M_i),
$$ 
hence we can write $M=\sum_2^lr_i'M_i'$ where $r_i'$ are real numbers and $M_i'$ are Cartier divisors,
contradicting the minimality of $l$. Then $\Supp M_i\subseteq \Supp M$ for 
every $i$: indeed otherwise there is a prime divisor $D$ which is not a component of $M$ 
but it is a component of some $M_i$ which gives $0=\mu_DM=\sum_1^l r_i\mu_DM_i$ producing a 
$\Q$-linear dependence of the $r_i$, a contradiction. 
Now a small perturbation of the $r_i$ gives the desired $\Q$-divisor $A$. 

\end{proof}

The next lemma is not used in the paper but it will be useful elsewhere.

\begin{lem}\label{l-fibres-over-smooth}
Let $f\colon X\to Z$ be a contraction of normal varieties and $M$ an $\R$-divisor on $X$. 
Let $\phi\colon X'\to X$ be a log resolution of $(X,\Supp M)$ and let $M'$ be the 
sum of the birational transform of $M$ and the reduced exceptional divisor of $\phi$. 
Let $F$ be a general fibre of $f$ and $F'$ the corresponding fibre of $X'\to Z$. 
Then $M'|_{F'}$ is the birational transform of $M|_F$ plus the reduced exceptional divisor of $\psi\colon F'\to F$.
\end{lem}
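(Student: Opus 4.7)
The plan is to shrink $Z$ to a suitable open subset so that both $f$ and $f\circ\phi$ become well-behaved and then to match components on the two sides by separating the divisors into those that dominate $Z$ and those that do not. Specifically, first choose a non-empty open $U\subseteq Z$ so that
\begin{enumerate}
\item $f^{-1}(U)\to U$ and $(f\circ\phi)^{-1}(U)\to U$ are flat and, in particular, every irreducible component of a fibre over $U$ has the expected dimension;
\item every irreducible component of $\Supp M$ either dominates $Z$ or is disjoint from $f^{-1}(U)$;
\item every $\phi$-exceptional prime divisor either dominates $Z$ or is disjoint from $(f\circ\phi)^{-1}(U)$;
\item the isomorphism locus of $\phi$ surjects onto $U$.
\end{enumerate}
Item (4) is possible because the complement of the isomorphism locus of $\phi$ has image in $Z$ whose closure is either $Z$ itself, in which case we throw away a proper subset of $Z$ of positive codimension, or a proper subset of $Z$ which we also remove.

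With this set-up, take $F=f^{-1}(z)$ for a general closed point $z\in U$ and let $F'=(f\circ\phi)^{-1}(z)$, so $\psi=\phi|_{F'}\colon F'\to F$.

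Next, I would split $M'=\widetilde M+E$ into the birational transform $\widetilde M$ of $M$ and the reduced $\phi$-exceptional divisor $E$, and restrict each piece. For $\widetilde M|_{F'}$: by (2) only components of $M$ (and hence of $\widetilde M$) that dominate $Z$ can meet $F$ (resp.\ $F'$), and for those components generality of $z$ together with the log-resolution hypothesis on $(X,\Supp M)$ ensures that their restriction to the general fibre is reduced along each prime and that distinct prime components of $M$ (or $\widetilde M$) cut out distinct prime divisors of $F$ (resp.\ $F'$), so $\widetilde M|_{F'}$ is exactly the birational transform of $M|_F$ under $\psi$. For $E|_{F'}$: by (3) only $\phi$-exceptional primes dominating $Z$ survive the restriction, and each such restricts to a (reduced) union of prime divisors on $F'$ lying in the non-isomorphism locus of $\psi$, hence $\psi$-exceptional.

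The main obstacle, and the one point requiring the greatest care, is the converse inclusion: showing that every $\psi$-exceptional prime of $F'$ is a component of $E|_{F'}$, i.e.\ that the exceptional locus of $\psi$ is exactly the restriction of the exceptional locus of $\phi$ that dominates $Z$. Here I would argue that, by (4) and generic flatness, the isomorphism locus of $\phi$ restricted to $F'$ coincides with the isomorphism locus of $\psi$ for $z$ general: a point of $F'$ where $\phi$ is an isomorphism is mapped isomorphically to a point of $F$, and conversely if $\psi$ were an isomorphism at some point $x'\in F'$ lying on a $\phi$-exceptional prime $E_i$ dominating $Z$, then by upper-semicontinuity and the fact that $E_i\cap F'$ has the expected codimension $1$ in $F'$ (from (1) and (3)) we would get a contradiction with the generality of $z$. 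Combining the two restrictions yields $M'|_{F'}=\widetilde{M|_F}+\mathrm{Exc}(\psi)_{\mathrm{red}}$ as required.
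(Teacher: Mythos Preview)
Your proposal is correct and follows essentially the same strategy as the paper: split $M'=\widetilde M+E$ and verify separately that $\widetilde M|_{F'}$ is the birational transform of $M|_F$ and that $E|_{F'}$ is the reduced exceptional divisor of $\psi$. The paper organises the argument slightly differently, working over the smooth locus $U\subset X$ (where it explicitly defines $M|_F$ as the closure of $M|_{F\cap U}$, since $M$ need not be $\R$-Cartier) rather than shrinking $Z$, and it simply asserts that the exceptional locus of $\psi$ equals $E|_{F'}$ without the converse-inclusion discussion you give; but the substance is the same.
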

\begin{proof}
Since $F'$ is a general fibre of $X'\to Z$, $\Supp M'$ does not contain $F'$, so $M'|_{F'}$ 
is well-defined as a divisor. However, $M$ may not be $\R$-Cartier but $M|_F$ can be defined as follows. 
Let $U$ be the smooth locus of $X$. Then the complement of $F\cap U$ has codimension $\ge 2$ in $F$, so 
$M|_F$ is well-defined on $F\cap U$ and then we let $M|_F$ be its closure in $F$. 

By assumption $M'=M^\sim+E$ where $M^\sim$ is the birational transform of $M$ and $E$ is 
the reduced exceptional divisor of $\phi$. The exceptional locus of $\psi$ is $E|_{F'}$, so
$M'|_{F'}$ is the sum of $M^\sim|_{F'}$ and the reduced exceptional divisor of $\psi$.  
On the other hand, letting $U'$ be the inverse image of $U$, there is an exceptional divisor $G$ such that 
$\phi^*M|_U= M^\sim|_{U'}+G|_{U'}$. Then $M^\sim|_{F'\cap U'}+G|_{F'\cap U'}$ is the pullback 
of $M|_{F\cap U}$. This implies that the pushdown of $M^\sim|_{F'}$ to $F$ is $M|_F$. Since no component 
of $M^\sim|_{F'}$ is exceptional over $F$, $M^\sim|_{F'}$ is the birational transform of $M|_F$, 
 so the claim follows. 
 
\end{proof}

\subsection{Linear systems}\label{ss-lin-systems}

Let $X$ be a normal variety and let $M$ be an $\R$-divisor on $X$. The round down $\rddown{M}$ determines a 
reflexive sheaf $\mathcal{O}_X(\rddown{M})$.
We usually write $H^i(M)$ instead of $H^i(X,\mathcal{O}_X(\rddown{M}))$ and write $h^i(M)$ for 
$\dim_k H^i(M)$.  We can describe 
$H^0(M)$ in terms of rational functions on $X$ as 
$$
H^0(M)=\{0\neq \alpha\in K \mid\Div(\alpha)+M\ge 0 \}\cup \{0\}
$$
where $K$ is the function field of $X$ and $\Div(\alpha)$ is the divisor associated to $\alpha$.

Assume $h^0(M)\neq 0$. 
The \emph{linear system} $|M|$ is defined as 
$$
|M|=\{N\mid 0\le N\sim M\}=\{\Div(\alpha)+M \mid 0\neq \alpha\in H^0(M)\}.
$$
Note that $|M|$ is not equal to $|\rddown{M}|$ unless $M$ is integral.
The \emph{fixed part} of $|M|$ is the $\R$-divisor $F$ with the property: 
if $G\ge 0$ is an $\R$-divisor and $G\le N$ for every $N\in |M|$, then $G\le F$. In particular, $F\ge 0$. 
We then define the \emph{movable part} of $|M|$ to be $M-F$ which is defined up to 
linear equivalence. If $\langle M\rangle:=M-\rddown{M}$, then the fixed part of $|M|$ is equal to 
$\langle M\rangle$ plus the fixed part of $|\rddown{M}|$. Moreover, if $0\le G\le F$, then 
the fixed and  movable parts of $|M-G|$ are $F-G$ and $M-F$, respectively. 

Note that it is clear from the definition that the movable part of $|M|$ is an integral divisor 
but the fixed part is only an $\R$-divisor.

\subsection{Pairs and singularities}\label{ss-pairs}
A \emph{sub-pair} $(X,B)$ consists of a normal variety $X$ and an $\R$-divisor 
$B$ such that $K_X+B$ is $\R$-Cartier. If $B\ge 0$, we call $(X,B)$ a \emph{pair} and 
if the coefficients of $B$ are in $[0,1]$ we call $B$ a \emph{boundary}. 

Let $\phi\colon W\to X$ be a log resolution of  a sub-pair $(X,B)$. Let $K_W+B_W$ be the 
pullback of $K_X+B$. The \emph{log discrepancy} of a prime divisor $D$ on $W$ with respect to $(X,B)$ 
is defined as 
$$
a(D,X,B):=1-\mu_DB_W.
$$
We say $(X,B)$ is \emph{sub-lc} (resp. \emph{sub-klt})(resp. \emph{sub-$\epsilon$-lc}) if 
 $a(D,X,B)$ is $\ge 0$ (resp. $>0$)(resp. $\ge \epsilon$) for every $D$. This means that  
every coefficient of $B_W$ is $\le 1$ (resp. $<1$)(resp. $\le 1-\epsilon$). If $(X,B)$ is a pair, 
we remove the sub and just say it is lc (resp. klt)(resp. $\epsilon$-lc). 
Note that since $a(D,X,B)=1$ for most prime divisors, we necessarily have $\epsilon\le 1$.

Let $(X,B)$ be a sub-pair. A \emph{non-klt place} of $(X,B)$ is a prime divisor $D$ over $X$, that is, 
on birational models of $X$,  such that $a(D,X,B)\le 0$, and a \emph{non-klt centre} is the image of such a $D$ on $X$. 
An \emph{lc place} of $(X,B)$ is a prime divisor $D$ over $X$ such that $a(D,X,B)=0$, and 
an \emph{lc centre} is the image on $X$ of an lc place. When $(X,B)$ is lc, then non-klt places and centres 
are the same as lc centres and places.

A \emph{log smooth} sub-pair is a sub-pair $(X,B)$ where $X$ is smooth and $\Supp B$ has simple 
normal crossing singularities. Assume $(X,B)$ is a log smooth pair and assume $B=\sum_1^r B_i$ is reduced  
where $B_i$ are the irreducible components of $B$. 
A \emph{stratum} of $(X,B)$ is a component of $\bigcap_{i\in I}B_i$ for some $I\subseteq \{1,\dots,r\}$.  
Since $B$ is reduced, a stratum is nothing but an lc centre of $(X,B)$.

\subsection{Semi-log canonical pairs}\label{ss-slc-pairs}
A \emph{semi-log canonical (slc) pair} $(X,B)$ over a field $K$ of characteristic zero 
(not necessarily algebraically closed) consists of a 
reduced pure dimensional quasi-projective scheme $X$ over $K$ and an 
$\R$-divisor $B\ge 0$ on $X$ satisfying the following conditions: 
\begin{itemize}
\item $X$ is $S_2$ with nodal codimension one singularities,

\item no component of $\Supp B$ is contained in the singular locus of $X$,

\item $K_X+B$ is $\R$-Cartier, and 

\item if $\pi\colon X^\nu\to X$ is the normalisation of $X$ and $B^\nu$ is the sum of the birational 
transform of $B$ and the conductor divisor of $\pi$, then $(X^\nu,B^\nu)$ is lc.
\end{itemize}
By $(X^\nu,B^\nu)$ being lc we mean after passing to the algebraic closure of $K$, 
$(X^\nu,B^\nu)$ is an lc pair on each of its irreducible components 
(we can also define being lc over $K$ directly using discrepancies as in \ref{ss-pairs}).
The conductor divisor of $\pi$ is the sum of the prime divisors on $X^\nu$ whose 
images on $X$ are contained in the singular locus of $X$. It turns out that $K_{X^\nu}+B^\nu=\pi^*(K_X+B)$ 
for a suitable choice of $K_{X^\nu}$ in its linear equivalence class: 
to see this note that $X$ is Gorenstein outside a codimension $\ge 2$ closed subset, so shrinking $X$ 
we can assume it is Gorenstein and that $X^\nu$ is regular; in this case $B$ is $\R$-Cartier so 
we can remove it in which case the equality follows from 
[\ref{Kollar-singMMP}, 5.7]. 
See  [\ref{Kollar-singMMP}, Chapter 5] for more on slc pairs.

\subsection{b-divisors}
A \emph{b-$\R$-Cartier b-divisor} over a variety $X$ is the choice of  
a projective birational morphism 
$Y\to X$ from a normal variety and an $\R$-Cartier $\R$-divisor $M$ on $Y$ up to the following equivalence: 
 another projective birational morphism $Y'\to X$ from a normal variety and an $\R$-Cartier $\R$-divisor
$M'$ define the same b-$\R$-Cartier  b-divisor if there is a common resolution $W\to Y$ and $W\to Y'$ 
on which the pullbacks of $M$ and $M'$ coincide.  

A b-$\R$-Cartier  b-divisor  represented by some $Y\to X$ and $M$ is \emph{b-Cartier} if  $M$ is 
b-Cartier, i.e. its pullback to some resolution is Cartier.

\subsection{Generalised pairs}\label{ss-gpp}
A \emph{generalised pair} consists of 
\begin{itemize}
\item a normal variety $X$ equipped with a projective
morphism $X\to Z$, 

\item an $\R$-divisor $B\ge 0$ on $X$, and 

\item a b-$\R$-Cartier  b-divisor over $X$ represented 
by some projective birational morphism $X' \overset{\phi}\to X$ and $\R$-Cartier $\R$-divisor
$M'$ on $X$
\end{itemize}
such that $M'$ is nef$/Z$ and $K_{X}+B+M$ is $\R$-Cartier,
where $M:= \phi_*M'$. 

We refer to $M'$ as the nef part of the pair.  
Since a b-$\R$-Cartier b-divisor is defined birationally, 
in practice we will often replace $X'$ with a resolution and replace $M'$ with its pullback.
When $Z$ is a point we drop it but say the pair is projective. 

Now we define generalised singularities.
Replacing $X'$ we can assume $\phi$ is a log resolution of $(X,B)$. We can write 
$$
K_{X'}+B'+M'=\phi^*(K_{X}+B+M)
$$
for some uniquely determined $B'$. For a prime divisor $D$ on $X'$ the \emph{generalised log discrepancy} 
$a(D,X,B+M)$ is defined to be $1-\mu_DB'$. 

We say $(X,B+M)$ is 
\emph{generalised lc} (resp. \emph{generalised klt})(resp. \emph{generalised $\epsilon$-lc}) 
if for each $D$ the generalised log discrepancy $a(D,X,B+M)$ is $\ge 0$ (resp. $>0$)(resp. $\ge \epsilon$).

For the basic theory of generalised pairs see [\ref{BZh},Section 4].

\subsection{Minimal models, Mori fibre spaces, and MMP}
Let $ X\to Z$ be a
projective morphism of normal varieties and $D$ be an $\R$-Cartier $\R$-divisor
on $X$. Let $Y$ be a normal variety projective over $Z$ and $\phi\colon X\bir Y/Z$
be a birational map whose inverse does not contract any divisor. 
Assume $D_Y:=\phi_*D$ is also $\R$-Cartier and that 
there is a common resolution $g\colon W\to X$ and $h\colon W\to Y$ such that
$E:=g^*D-h^*D_Y$ is effective and exceptional$/Y$, and
$\Supp g_*E$ contains all the exceptional divisors of $\phi$.

Under the above assumptions we call $Y$ 
a \emph{minimal model} of $D$ over $Z$ if $D_Y$ is nef$/Z$.
On the other hand, we call $Y$ a \emph{Mori fibre space} of $D$ over $Z$ if there is an extremal contraction
$Y\to T/Z$ with $-D_Y$  ample$/T$ and $\dim Y>\dim T$.

If one can run a \emph{minimal model program} (MMP) on $D$ over $Z$ which terminates 
with a model $Y$, then $Y$ is either a minimal model or a Mori fibre space of 
$D$ over $Z$. If $X$ is a Mori dream space, 
eg if $X$ is of Fano type over $Z$, then such an MMP always exists by [\ref{BCHM}].

\subsection{Potentially birational divisors}
Let $X$ be a normal projective variety and let $D$ be a big
$\Q$-Cartier $\Q$-divisor on $X$. We say that $D$ is \emph{potentially
birational} if for any pair $x$ and $y$ of general closed points of $X$, possibly
switching $x$ and $y$, we can find $0 \le  \Delta \sim_\Q (1 - \epsilon)D$ 
for some $0 < \epsilon < 1$ such that 
$(X, \Delta)$ is not klt at $y$ but $(X, \Delta)$ is lc at $x$ and
$\{x\}$ is an isolated non-klt centre. Note that this definition is stronger than that of [\ref{HMX2}, Definition 3.5.3].

A useful property of potentially birational divisors is that 
if $D$ is potentially birational, then $|K_X+\lceil D\rceil|$ defines a birational map 
[\ref{HMX1}, Lemma 2.3.4].

\subsection{Bounded families of pairs}\label{ss-bnd-couples}
We say a set $\mathcal{Q}$ of normal projective varieties is \emph{birationally bounded} (resp. \emph{bounded}) 
if there exist finitely many projective morphisms $V^i\to T^i$ of varieties  
such that for each $X\in \mathcal{Q}$ there exist an $i$, a closed point $t\in T^i$, and a 
birational isomorphism (resp. isomorphism) $\phi\colon V^i_t\bir X$  where $V_t^i$ is the fibre of $V^i\to T^i$ over $t$.

Next we will define boundedness for couples. 
A \emph{couple} $(X,S)$ consists of a normal projective variety $X$ and a  divisor 
$S$ on $X$ whose coefficients are all equal to $1$, i.e. $S$ is a reduced divisor. 
We use the term couple instead of pair because  
$K_X+S$ is not assumed $\Q$-Cartier and $(X,S)$ is not assumed to have good singularities.
 
We say that a set $\mathcal{P}$ of couples  is \emph{birationally bounded} if there exist 
finitely many projective morphisms $V^i\to T^i$ of varieties and reduced divisors $C^i$ on $V^i$ 
such that for each $(X,S)\in \mathcal{P}$ there exist an $i$, a closed point $t\in T^i$, and a 
birational isomorphism $\phi\colon V^i_t\bir X$ such that $(V^i_t,C^i_t)$ is a couple and 
$E\le C_t^i$ where $V_t^i$ and $C_t^i$ are the fibres over $t$ of the morphisms $V^i\to T^i$ 
and $C^i\to T^i$, respectively, and $E$ is the sum of the 
birational transform of $S$ and the reduced exceptional divisor of $\phi$.
We say $\mathcal{P}$ is \emph{bounded} if we can choose $\phi$ to be an isomorphism. 
  
A set $\mathcal{R}$ of projective pairs $(X,B)$ is said to be {log birationally bounded} (resp. log {bounded}) 
if the set of the corresponding couples $(X,\Supp B)$ is birationally bounded (resp. bounded).
Note that this does not put any condition on the coefficients of $B$, e.g. we are not requiring the 
coefficients of $B$ to be in a finite set.

\subsection{Families of subvarieties}\label{ss-cov-fam-subvar}
Let $X$ be a normal projective variety. A \emph{bounded family $\mathcal{G}$ of subvarieties} of $X$ 
is a family of (closed) subvarieties such that there are finitely many morphisms 
$V^i\to T^i$ of projective varieties together with morphisms $V^i\to X$ 
such that $V^i\to X$ embeds in $X$ the fibres of $V^i\to T^i$ over closed points, and 
each member of the family $\mathcal{G}$ is isomorphic to a fibre of 
some $V^i\to T^i$ over some closed point. Note that we can replace the $V^i\to T^i$ 
so that we can assume the set of points of $T^i$ corresponding to members of $\mathcal{G}$ 
is dense in $T^i$.
We say the family $\mathcal{G}$ is a \emph{covering family of subvarieties} of $X$ if 
the union of its members contains some non-empty open subset of $X$. In particular, this means 
$V^i\to X$ is surjective for at least one $i$.
When we say $G$ is a \emph{general member of $\mathcal{G}$} 
we mean there is $i$ such that $V^i\to X$ is surjective,  the set $A$ of points of $T^i$ corresponding 
to members of $\mathcal{G}$ is dense in $T^i$, and $G$ is the fibre of 
$V^i\to T^i$ over a general point of $A$ (in particular, $G$ is among the general fibres of 
$V^i\to T^i$). Note that the definition of a bounded family here is compatible with 
 \ref{ss-bnd-couples}.

\subsection{Creating non-klt centres}\label{ss-non-klt-centres}
In this subsection we make some preparations on non-klt centres.

(1) 
First we need the following lemma.

\begin{lem}\label{l-unique-lc-place}
Assume that 
\begin{itemize}
\item $(X,B)$ is a projective pair where $B$ is a $\Q$-divisor,

\item $\Delta\ge 0$ is a $\Q$-Cartier $\Q$-divisor and $H$ is an ample $\Q$-divisor,
 
\item $x,y\in X$ are closed points, 

\item $(X,B)$ is klt near $x$ and $(X,B+\Delta)$ is lc near $x$ with a non-klt
centre $G$ containing $x$ but $(X,B+\Delta)$ is not klt near $y$,

\item $G$ is minimal among the non-klt centres of $(X,B+\Delta)$ containing $x$, and 

\item either $y\in G$ or $(X,B+\Delta)$ has a non-klt centre containing $y$ but not $x$.
\end{itemize}
Then there exist rational numbers $0\le t\ll s\le 1$ and  
a $\Q$-divisor $0\le E\sim_\Q tH$ such that $(X,B+s\Delta+E)$ is not klt near $y$ but it is lc near $x$ with 
a unique non-klt place whose centre contains $x$, and the centre of this non-klt place is $G$.   
\end{lem}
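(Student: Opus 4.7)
The plan is to apply the standard Kawamata--Shokurov tie-breaking technique. Take a log resolution $\phi\colon W\to X$ of $(X,B+\Delta+H)$ that also resolves the auxiliary linear systems appearing below. Writing
\[
\phi^*(K_X+B+\Delta)=K_W+B_W+\Delta_W
\]
with $B_W$, $\Delta_W$ the pullbacks of $B$, $\Delta$, for each prime divisor $D$ on $W$ set $a_D=\mu_D B_W$ and $b_D=\mu_D \Delta_W$. The hypotheses translate to: $a_D<1$ for every $D$ with $x\in\phi(D)$ (since $(X,B)$ is klt near $x$), $a_D+b_D\le 1$ for every such $D$ (since $(X,B+\Delta)$ is lc near $x$), and there exists at least one prime $D_0$ with $\phi(D_0)=G$ and $a_{D_0}+b_{D_0}=1$, so in particular $b_{D_0}>0$.

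For any rational $s\in(0,1)$, the coefficient $a_D+sb_D=a_D+b_D-(1-s)b_D$ is strictly less than $1$ whenever $x\in\phi(D)$, so $(X,B+s\Delta)$ is klt near $x$. To restore $G$ as a non-klt centre after taking $s$ close to $1$, fix $m\gg 0$ so that $mH$ is very ample and, in case $y\notin G$, fix also a non-klt centre $G'$ of $(X,B+\Delta)$ through $y$ not containing $x$ (provided by hypothesis). For $m$ large the ideal sheaves $\mathcal O_X(mH)\otimes \mathcal I_G^k$, and when applicable $\mathcal O_X(mH)\otimes(\mathcal I_G\cdot\mathcal I_{G'})^l$, have global sections. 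Pick a general effective $F\in|mH|$ vanishing to sufficiently high order along $G$ (and along $G'$ when $y\notin G$), and put $E_1=\tfrac{1}{m}F\sim_\Q H$. By the generic choice of $F$, together with the minimality of $G$ (which rules out lc places of $(X,B+\Delta)$ with centre through $x$ strictly inside $G$), the ratio $\mu_D(\phi^*E_1)/b_D$ taken over all lc places $D$ of $(X,B+\Delta)$ with $x\in\phi(D)$ is strictly maximized at $D=D_0$. Now define
\[
t:=\frac{(1-s)b_{D_0}}{\mu_{D_0}(\phi^*E_1)},\qquad E:=tE_1\sim_\Q tH;
\]
then $t\to 0$ as $s\to 1$, so the condition $t\ll s$ can be achieved.

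By construction the coefficient of $D_0$ in $B_W+s\Delta_W+\phi^*E$ equals exactly $1$, so $D_0$ is a non-klt place of $(X,B+s\Delta+E)$ centred on $G\ni x$; the ratio inequality forces every other prime $D$ with $x\in\phi(D)$ to have coefficient strictly below $1$. Thus $(X,B+s\Delta+E)$ is lc near $x$ with $D_0$ the unique non-klt place over $x$, and its centre is $G$. For the non-klt condition near $y$: if $y\in G$ it follows directly from $y\in\phi(D_0)$. Otherwise, pick a prime $D_j$ on $W$ with $\phi(D_j)=G'$ and $a_j+b_j\ge 1$; if $a_j+b_j>1$, taking $s$ close enough to $1$ forces $a_j+sb_j\ge 1$ already, and if $a_j+b_j=1$, the prescribed vanishing of $F$ along $G'$ makes $\mu_{D_j}(\phi^*E)\ge (1-s)b_j$, so in either case $(X,B+s\Delta+E)$ is non-klt at $y$. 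The main technical obstacle I expect is the simultaneous generic-position argument, namely choosing $F$ so that the ratio $\mu_D(\phi^*E_1)/b_D$ is strictly maximized by $D_0$ among lc places through $x$ while preserving enough multiplicity along $D_j$ when $y\notin G$; this is a standard but somewhat delicate balance of the parameters $s$, $m$, $k$, $l$, $t$ that becomes easier as $m$ grows.
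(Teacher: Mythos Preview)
Your approach is the right one in spirit—this is classical tie-breaking—but there is a genuine gap at exactly the point you flagged. When you choose a general $F\in|mH|$ vanishing to order $k$ along $G$, the multiplicity $\mu_D(\phi^*F)$ along an exceptional divisor $D$ over $G$ is not a ``generic'' quantity that varies with $F$; for general $F$ it equals the fixed multiplicity of the base locus of $|mH\otimes\mathcal{I}_G^k|$ along $D$. So if $D_0,D_1$ are two lc places of $(X,B+\Delta)$ with the same centre $G$ for which the ratios $\mu_{D_i}(\phi^*\mathcal{I}_G)/b_{D_i}$ happen to coincide, no choice of parameters $m,k,l,s$ will separate them: both will reach coefficient $1$ simultaneously, and you get two non-klt places over $x$. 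Minimality of $G$ only rules out lc centres strictly inside $G$; it says nothing about multiplicity of lc \emph{places} over $G$ itself.

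The paper fixes this with an extra ingredient you are missing: after using a divisor $M\sim_\Q H$ through $G$ (your $E_1$) to force all surviving non-klt centres through $x$ to equal $G$, it writes $\phi^*H\sim_\Q A+C$ with $A$ ample and $C\ge 0$, adds a small multiple $dH'$ of the pushdown, and then \emph{perturbs the coefficients of $C$} (compensating in $A$) so that among the finitely many candidate components over $G$ only one, say $S$, hits coefficient $1$. The point is that an ample divisor can be moved freely, so its contribution along each exceptional $D$ can be adjusted independently; vanishing along $G$ alone cannot do this. Concretely: after your construction you may have several $D_i$ with coefficient exactly $1$; replace $E$ by $E+d H'$ for $H'\sim_\Q H$ general, rescale to restore threshold $1$ at the place $D_0$ with largest $\mu_{D_0}(\phi^*H')/b_{D_0}$, and use the ampleness of $A$ in $H'\sim_\Q A+C$ to perturb so that this maximum is attained uniquely. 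Your treatment of the point $y$ (adding a further $cN$ with $N\sim_\Q H$ through $G'$ but avoiding $x$ when needed) is essentially what the paper does and is fine.
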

\begin{proof}
Pick $0\le M\sim_\Q H$ whose support contains $G$ so that for any $s<1$ any non-klt centre of $(X,B+s\Delta+M)$ 
 passing through $x$ is contained in $G$: this is possible by taking a general member 
$mM$ of the sublinear system of $|mH|$ consisting of the elements that contain $G$, for some sufficiently large 
natural number $m$.  On the other hand, pick $0\le N\sim_\Q H$
such that $x\notin \Supp N$. If $y\notin G$, then pick $N$ so
that $N$ contains a non-klt centre $P$ of $(X,B+\Delta)$ with $y\in P$ but $x\notin P$: 
this is possible by the last condition of the lemma.

Let $\phi\colon W\to X$ be a log resolution of 
$$
(X,B+\Delta+M+N).
$$ 
Then $\phi^*H\sim_\Q A+C$ where $A\ge 0$ is ample and $C\ge 0$. Let 
$C'=\phi_*C$ and $H'=\phi_*(A+C)$. We can assume that $G\subseteq \Supp C'$.  Replacing $X$ with a higher resolution we can assume $\phi$ is a log 
resolution of 
$$
(X,B+\Delta+M+N+C');
$$ 
note that here we pull back $A,C$ to the new resolution, so $A$ may no longer be 
ample but it is nef and big, hence perturbing coefficients in the exceptional components 
we can make $A$ ample again. Changing $A$ 
up to $\Q$-linear equivalence we can assume $A$ is general, so $\phi$ is a log resolution of 
$$
(X,B+\Delta+M+N+H').
$$

Write 
$$
K_W+\Gamma_{a,b,c,d}=\phi^*(K_{X}+B+a\Delta+bM+cN+dH').
$$ 
Let $T$ be the sum of the components of  $\rddown{\Gamma_{1,0,0,0}^{\ge 0}}$
whose image on $X$ is $G$ where $\Gamma_{a,b,c,d}^{\ge 0}$  denotes the effective part of $\Gamma_{a,b,c,d}$. 
We can assume $T\neq 0$ since $G$ is a non-klt centre of $(X,B+\Delta)$.

Pick a rational number $0<b\ll 1$ and let 
$a$ be the lc threshold of $\Delta$ with respect to $(X,B+bM)$ near $x$. Then $a$ is sufficiently 
close to $1$ but not equal to $1$. Moreover, 
$$
\rddown{\Gamma_{a,b,0,0}^{\ge 0}}\subseteq \rddown{\Gamma_{1,b,0,0}^{\ge 0}}=\rddown{\Gamma_{1,0,0,0}^{\ge 0}}.
$$  
By our choice of $M$ and by the minimality of $G$, the only possible non-klt centre of 
$(X,B+a\Delta+bM)$ through $x$ is $G$, so 
any component of $\rddown{\Gamma_{a,b,0,0}^{\ge 0}}$ whose image contains $x$, is a component of $T$. 

Now pick a rational number $0<d\ll b$ and let $\lambda$ be the lc threshold of $a\Delta+bM$ 
with respect to $(X,B+dH')$ near $x$. Then $\lambda$ is sufficiently 
close to $1$ but not equal to $1$  as $G\subseteq \Supp H'$. Moreover, 
$$
\rddown{\Gamma_{\lambda a,\lambda b,0,d}^{\ge 0}}\subseteq \rddown{\Gamma_{a,b,0,d}^{\ge 0}}
\subseteq \rddown{\Gamma_{a,b,0,0}^{\ge 0}},
$$    
so any component of $\rddown{\Gamma_{\lambda a,\lambda b,0,d}^{\ge 0}}$  whose image contains $x$ is a component of $T$. 
Thus $G$ is the only non-klt centre of 
$$
(X,B+\lambda a\Delta+\lambda bM+dH')
$$ 
passing through $x$.
In particular, there is a component $S$ of $\rddown{\Gamma_{\lambda a,\lambda b,0,d}^{\ge 0}}$  
whose image is $G$.
Now noting that $\phi^*H'=A+C$ and that 
$$
\Gamma_{\lambda a,\lambda b,0,d}=\Gamma_{\lambda a,\lambda b,0,0}+dA+dC,
$$ 
possibly after perturbing the coefficients of $C$ and replacing $A$ accordingly, 
we can assume that $S$ is the only component of $\rddown{\Gamma_{\lambda a,\lambda b,0,d}^{\ge 0}}$ 
whose image contains $x$.

Let $s=\lambda a$, $t=\lambda b+d$, and $E=\lambda bM+dH'$. 
By construction, $(X,B+s\Delta+E)$ is lc near $x$ with 
a unique non-klt place whose centre contains $x$, and the centre of this non-klt place is $G$. 
If $(X,B+s\Delta +E)$ is not klt near $y$, then  
 we are done. In particular, this is the case if $y\in G$. Thus we can assume 
$(X,B+s\Delta +E)$ is klt near $y$ and that $y\notin G$.  
So by assumption, there is a non-klt centre $P$ of 
$(X,B+\Delta)$ containing $y$ but not $x$. By our choice of $N$, $P\subset \Supp N$ but $x\notin \Supp N$. 
Let $c$ be the lc threshold of $N$ with respect to $(X,B+s\Delta +E)$ near $y$. Since 
$s$ is sufficiently close to $1$, $c$ is sufficiently small. Now replace $E$ with $E+cN$ 
and replace $t$ with $t+c$.
 
\end{proof}

(2)
Let $X$ be a normal projective variety of dimension $d$ and $D$ a nef and big $\Q$-divisor. 
Assume $\vol(D)>(2d)^d$. Then there is a bounded family of subvarieties of $X$ such that 
for each pair $x,y\in X$ of general closed points, there is a member $G$ of the family and 
there is $0\le \Delta\sim_\Q D$ such that $(X,\Delta)$ is lc near $x$ with a unique non-klt place 
whose centre contains $x$, that centre is $G$, 
and $(X,\Delta)$ is not klt at $y$ [\ref{HMX2}, Lemma 7.1]. 

If in addition we are also given a $\Q$-divisor $0\le M\sim_\Q D$, then we can assume $\Supp M\subset \Supp \Delta$ 
simply by adding a small multiple of $M$ to $\Delta$. Since $x,y$ are general, they are not 
contained in $\Supp M$.\\

(3)
\begin{lem}\label{l-non-klt-centres-i}
Under the setting of (2) assume that $A$ is an ample $\Q$-divisor. 
Let $\Delta$ and $G$ be chosen for a pair $x,y\in X$ of general closed points and 
assume $\dim G>0$ and $\vol(A|_G)> d^d$. Then either for $z_1=x, z_2=y$ or for 
$z_1=y, z_2=x$ we can find a rational number $0<c<2$ and  
a $\Q$-divisor $0\le L\sim_\Q cA$ such that 
\begin{itemize}
\item $(X,\Delta+L)$ is not klt near $z_2$ but it is lc near $z_1$,
\item  $(X,\Delta+L)$ has a minimal non-klt centre $G'\subsetneq G$ through $z_1$, and 
\item either $z_2\in G'$ or $(X,\Delta+L)$ has a non-klt centre containing $z_2$ but not $z_1$.
\end{itemize}
\end{lem}
\begin{proof}
 By [\ref{kollar-sing-pairs}, 6.8.1 and its proof], 
there exist a rational number $0<e<1$ and a $\Q$-divisor $0\le N\sim_\Q eA$ such that 
$(X,\Delta+N)$ is lc near $x$ but has a non-klt centre through $x$ other than $G$. 
Since intersection of non-klt centres near $x$ is a union of such centres, 
the minimal non-klt centre of $(X,\Delta+N)$ at $x$ is a proper subvariety ${G}'\subsetneq G$.

If $y\in G'$ or if $(X,\Delta+N)$ has a non-klt centre containing $y$ but not $x$, 
then we let $z_1=x,z_2=y$ and $c=e, L=N$. Thus we 
can assume that $y\notin G'$ and that every non-klt centre of $(X,\Delta+N)$ containing $y$ also contains $x$. In particular, this implies that $(X,\Delta+N)$ is lc at $y$. Moreover,   
 $(X,\Delta)$ is lc at $y$ and $G$ is the only non-klt centre of $(X,\Delta)$ containing $y$ as it is the only non-klt centre of $(X,\Delta)$ containing $x$. 

Let $G''$ be the minimal non-klt centre of $(X,\Delta+N)$ at $y$. Then $G''\subseteq G$ and $G''$ contains $x$. If $G''$ is a proper subvariety of $G$, then we let $z_1=y, z_2=x$ and switch $G',G''$ and again let $c=e,L=N$.  

Thus we can assume $G''=G$. Then since $G$ is a non-klt centre of $(X,\Delta)$ at $y$,  we see that 
$G\not\subseteq \Supp N$, hence $G$ is the unique non-klt centre $(X,\Delta+N)$ at $y$.
Then $(X,\Delta+(1+\epsilon)N)$ is lc at $y$ but not lc at $x$ for a small $\epsilon>0$ as $G'\subset \Supp N$. 
Here we apply [\ref{kollar-sing-pairs}, 6.8.1 and its proof] to find a rational number 
$0<e'<1$ and a $\Q$-divisor $0\le N'\sim_\Q e'A$ such that $(X,\Delta+N+N')$ is 
lc at $y$ with a minimal non-klt centre $G'''\subsetneq G$ at $y$ but not lc at $x$. 
This time we let $z_1=y, z_2=x$ and switch $G',G'''$ and let $c=e+e',L=N+N'$.

\end{proof}

\begin{lem}\label{l-non-klt-centres-ii}
Under the setting of (2) assume that $A$ is a nef and big $\Q$-divisor. 
Let $\Delta$ and $G$ be chosen for a pair $x,y\in X$ of general closed points and 
assume $\dim G>0$ and $\vol(A|_G)> d^d$. Then possibly switching $x,y$, there exist a $\Q$-divisor
$$
0\le {\Delta}^{(1)}\sim_\Q D+2A
$$ 
and a proper subvariety ${G}^{(1)}\subsetneq G$ such that  
$(X,{\Delta}^{(1)})$ is lc near $x$ with a unique non-klt place 
whose centre contains $x$, that centre is ${G}^{(1)}$, and $(X,{\Delta}^{(1)})$ is not klt at $y$ 
(compare with [\ref{HMX1}, Theorem 2.3.5]). 
\end{lem}
\begin{proof}
First note that since we are concerned with general points of $X$, to 
prove the lemma we can replace $X$ with a resolution and replace $D,\Delta,A$ with their pullbacks 
and replace $G$ with its birational transform, hence assume $X$ is smooth. 

First assume that $A$ is ample. By Lemma \ref{l-non-klt-centres-i}, either for $z_1=x, z_2=y$ or for 
$z_1=y, z_2=x$ we can find a rational number $0<c<2$ and  
a $\Q$-divisor $0\le L\sim_\Q cA$ such that 
\begin{itemize}
\item $(X,\Delta+L)$ is not klt near $z_2$ but it is lc near $z_1$,
\item  $(X,\Delta+L)$ has a minimal non-klt centre $G'\subsetneq G$ through $z_1$, and 
\item either $z_2\in G'$ or $(X,\Delta+L)$ has a non-klt centre containing $z_2$ but not $z_1$.
\end{itemize}
If $z_1=x, z_2=y$, then we apply Lemma \ref{l-unique-lc-place} to $(X,\Delta+L)$, $A$, $G'$, to find  rational numbers $0<t\ll s<1$ and  
a $\Q$-divisor $0\le L'\sim_\Q tA$ such that $(X,s\Delta+sL+L')$ is not klt near $y$ 
but it is lc near $x$ with a unique non-klt place whose centre contains $x$, 
and the centre of this non-klt place is $G'$. We then let 
$$
\Delta^{(1)}=s\Delta+sL+L'+ (1-s)M+(2-t-sc)A\sim_\Q \Delta+2A
$$
and let $G^{(1)}=G'$. On the other hand, if $z_1=y, z_2=x$, then we switch $x,y$ and again apply Lemma \ref{l-unique-lc-place} to $(X,\Delta+L)$, $A$, $G'$ and proceed as before. 

It remains to treat the case when $A$ is nef and big but not necessarily ample. 
Write $A\sim_\Q H+P$ independent of $x,y$ 
where $H\ge 0$ is ample and $P\ge 0$ and these are $\Q$-divisors. 
Since $x,y$ are general, they are not contained in $P$. 
Moreover, for any rational number $t\in (0,1)$, we have 
$$
A\sim_\Q (1-t)A+tH+tP
$$ 
where $(1-t)A+tH$ is ample.  
Now  taking $t$ small enough we have 
$$
\vol(((1-t)A+tH)|_G)>d^d,
$$ 
so we can apply the above arguments to $(1-t)A+tH$ to construct $\Delta^{(1)}$ 
and $G^{(1)}$ and then add $2tP$ to $\Delta^{(1)}$.\\

\end{proof}


\section{\bf Geometry of non-klt centres}

In this section we establish some results around the geometry of non-klt centres which are crucial for later sections.

\subsection{Definition of adjunction}\label{constr-adjunction-non-klt-centre}
First we recall the definition of adjunction which was introduced in [\ref{HMX1}]. 
We follow the presentation in [\ref{B-compl}].
Assume the following setting:
\begin{itemize}
\item $(X,B)$ is a projective klt pair, 

\item $G\subset X$ is a subvariety with normalisation $F$,

\item $X$ is $\Q$-factorial near the generic point of $G$,

\item $\Delta\ge 0$ is an $\R$-Cartier divisor on $X$, and 

\item  $(X, B + \Delta)$ is lc near the generic point of $G$, 
and there is a unique non-klt place of this pair with centre $G$ (but the pair may have other non-klt places whose centres are not $G$).
\end{itemize}\

We will define an $\R$-divisor $\Theta_F$ on $F$ with coefficients in $[0,1]$ giving an 
 adjunction formula
$$
K_F + \Theta_F+P_F\sim_\R (K_X + B + \Delta)|_F
$$
where in general $P_F$ is determined only up to $\R$-linear equivalence. Moreover,  
we will see that if the coefficients of $B$ are contained in a fixed DCC set $\Phi$, then the coefficients of $\Theta_F$ 
are also contained in a fixed DCC set $\Psi$ depending only on $\dim X$ and $\Phi$ [\ref{HMX2}, Theorem 4.2]. 

Let $\Gamma$ be the sum of $(B+\Delta)^{<1}$ and the support of $(B+\Delta)^{\ge 1}$. 
Put 
$$
N=B+\Delta-\Gamma
$$ 
which is supported in $\rddown{\Gamma}$.
Let $\phi\colon W\to X$ be a log resolution of $(X,B+\Delta)$ and let $\Gamma_W$ be the sum of the reduced exceptional 
divisor of $\phi$ and the birational transform of $\Gamma$. Let 
$$
N_W=\phi^*(K_X+B+\Delta)-(K_W+\Gamma_W).
$$
Then $\phi_*N_W=N\ge 0$ and $N_W$ is supported in $\rddown{\Gamma_W}$. 
Now run an MMP$/X$ on $K_W+\Gamma_W$ with scaling of some 
ample divisor. We reach a model $Y$ on which $K_Y+\Gamma_Y$ is a limit of movable$/X$ $\R$-divisors. 
Applying the general negativity lemma (cf. [\ref{B-lc-flips}, Lemma 3.3]), we deduce $N_Y\ge 0$. 
In particular, if $U\subseteq X$ is the largest open subset where $(X,B+\Delta)$ is lc, then 
$N_Y=0$ over $U$ and $(Y,\Gamma_Y)$ is a $\Q$-factorial dlt model of $(X,B+\Delta)$ over $U$. 
By assumption, $(X,B+\Delta)$ is lc but not klt at the generic point of $G$. By 
[\ref{B-compl}, Lemma 2.33], no non-klt centre of the pair contains $G$ apart from $G$ 
itself, hence we can assume there is a unique
component $S$ of $\rddown{\Gamma_Y}$ whose image on $X$ contains $G$, and that this image is $G$. 
Moreover,  $G$ is not inside the image of $N_Y$.

Let $h\colon S\to F$ be the morphism induced by $S\to G$. By [\ref{B-compl}, Lemma 2.33], $h$ is a contraction.
By divisorial adjunction we can write 
$$
K_S+\Gamma_S+N_S=(K_Y+\Gamma_Y+N_Y)|_S\sim_\R 0/F
$$
where $N_S=N_Y|_S$ is vertical over $F$. 

If $S$ is exceptional over $X$, then
let $\Sigma_Y$ be the sum of the exceptional$/X$ divisors on $Y$ plus 
the birational transform of $B$. Otherwise let $\Sigma_Y$ 
be the sum of the exceptional$/X$ divisors on $Y$ plus 
the birational transform of $B$ plus $(1-\mu_GB)S$. In any case, $S$ is a 
component of $\rddown{\Sigma_Y}$ and $\Sigma_Y\le \Gamma_Y$.
Applying adjunction again we get 
$K_S+\Sigma_S=(K_Y+\Sigma_Y)|_S$. Obviously $\Sigma_S\le \Gamma_S$. 

Now we define $\Theta_F$: 
for each prime divisor $D$ on $F$, let $t$ be the lc threshold of $h^*D$ with respect to 
$(S,\Sigma_S)$ over the generic point of $D$, and then let $\mu_D\Theta_F:=1-t$. 
Note that $h^*D$ is defined only over the generic 
point of $D$ as $D$ may not be $\Q$-Cartier. 

Note that if the coefficients of $B$ are contained in a fixed DCC set $\Phi$ including $1$, then the 
coefficients of $\Gamma_Y$ are contained in $\Phi$ which in turn implies that 
the coefficients of $\Gamma_S$ are contained in a fixed DCC set $\Psi'$. Applying the ACC 
for lc thresholds [\ref{HMX2}, Theorem 1.1] shows that the coefficients of $\Theta_F$ 
are also contained in a fixed DCC set $\Psi$ depending only on $\dim X$ and $\Phi$. 
Moreover, it turns out that $P_F$ is pseudo-effective 
(see [\ref{HMX2}, Theorem 4.2] and [\ref{B-compl}, Theorem 3.10]).

\subsection{Boundedness of singularities on non-klt centres}
The next result is a generalisation of [\ref{B-compl}, Proposition 4.6] which puts strong restrictions on 
singularities that can appear on non-klt centres under suitable assumptions.
We will not need the proposition in its full generality in this paper but it will likely be useful elsewhere.

\begin{prop}\label{p-bnd-sing-on-non-klt-centre}
Let $d,v$ be natural numbers and $\epsilon'<\epsilon$ be positive real numbers. 
Then there exists a positive real number $t$ 
depending only on $d,v,\epsilon,\epsilon'$ satisfying the following. Assume 
$X,C,M,S,\Delta$, $G,F,\Theta_F,P_F$ 
are as follows:
\begin{enumerate}
\item $(X,C)$ is a projective $\epsilon$-lc pair of dimension $d$,

\item $K_X$ is $\Q$-Cartier, 

\item $M\ge 0$ is a nef $\Q$-divisor on $X$ such that $|M|$ defines a birational map,

\item $S\ge 0$ is an $\R$-Cartier $\R$-divisor on $X$, 

\item the coefficients of $C+S$ are in $\{0\}\cup [\epsilon,\infty)$ and the coefficients of 
$M+C+S$ are in $[1,\infty)$, 

\item $G$ is a general member of a covering family of subvarieties of $X$, with normalisation $F$, 

\item $\Delta\ge 0$ is an $\R$-Cartier $\R$-divisor on $X$, 

\item $(X,\Delta)$ is lc near the generic point of $G$ with a unique non-klt place with centre $G$ (but the pair may have other non-klt places whose centres are not $G$), 

\item the adjunction formula
$$
K_F+\Theta_F+P_F\sim_\R (K_X+\Delta)|_F
$$
is as in \ref{constr-adjunction-non-klt-centre} assuming $P_F\ge 0$,

\item $\vol(M|_F)\le v$, 

\item  $K_X+C+\Delta$ is nef, 

\item $M-(K_X+C+S+\Delta)$ is big, and 

\item $tM-\Delta-S$ is big.\\
\end{enumerate}
Then  for any $0\le L_F\sim_\R M|_F$, the pair 
$$
(F,\Theta_F+P_F+C|_F+tL_F)
$$ 
is $\epsilon'$-lc.
\end{prop}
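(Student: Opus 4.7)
The plan is to argue by contradiction, turning a putative bad log discrepancy on $F$ into a bad log discrepancy on $X$ via the adjunction construction of \ref{constr-adjunction-non-klt-centre}, and to pin down the threshold $t$ using the volume bound $\vol(M|_F)\le v$ together with the birationality of $|M|$.

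First I would set up the contradiction: suppose that for arbitrarily small $t>0$ there exists $0\le L_F\sim_\R M|_F$ and a prime divisor $T$ over $F$ with
\[
a(T, F, \Theta_F+P_F+C|_F+tL_F)<\epsilon'.
\]
The adjunction formula (9) rewrites the pair as $\R$-linearly equivalent to $(K_X+C+\Delta+tM)|_F$. Hypothesis (12) lets me decompose $tM\sim_\R \Delta+S+A_t+P_t$ with $A_t$ ample and $P_t\ge 0$, and since $G$ is a general member of a covering family I can arrange that $G\not\subseteq \Supp P_t\cup \Supp A_t$ after perturbing. This converts the ambient class on $X$ to $K_X+C+2\Delta+S+(\text{ample and effective})$, i.e.\ an expression whose coefficients near $G$ are controlled through hypothesis (5).

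Next I would exploit the volume bound. By Lemma \ref{l-restriction-big-divisors} and the fact that $G$ is general, $M|_F$ is big and its volume is $\le v$ by (10). Using \ref{ss-non-klt-centres}(2--3) applied on $F$ together with the birationality of $|M|$ from (3), I can quantify precisely how much of $tL_F$ can be added to $\Theta_F+P_F+C|_F$ before creating an additional non-klt centre through a general point of $F$. This yields an explicit lower bound on $t$ that must hold for the putative bad divisor $T$ to persist — giving the sought threshold in terms of $d, v, \epsilon, \epsilon'$.

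The main obstacle, and therefore the technical heart of the argument, is the inverse-of-adjunction step that lifts the bad log discrepancy from $F$ to $X$. Unwinding \ref{constr-adjunction-non-klt-centre}, $T$ is pulled back along the dlt model $Y\to X$ through the unique component $S$ of $\rddown{\Gamma_Y}$ mapping to $G$. The log discrepancy $a(T,F,\Theta_F+P_F+C|_F+tL_F)<\epsilon'$ therefore corresponds to a divisor $T'$ over $X$ whose log discrepancy against $(X,C+(\Delta+tM))$ — interpreted correctly through the dlt model — is strictly less than $\epsilon'$. Absorbing the perturbation introduced in the second paragraph into $C$ costs at most $O(t)$ in log discrepancy thanks to (5), and hypothesis (11) ensures the $K_X+C+\Delta$ part is nef so no further negativity shows up. Choosing $t$ small enough (depending only on $d,v,\epsilon,\epsilon'$) that the cumulative perturbation is below $\epsilon-\epsilon'$, I conclude $a(T',X,C)<\epsilon$, contradicting (1). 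Balancing this ``budget'' $\epsilon-\epsilon'$ against the loss in Steps 2 and 3 is the delicate bookkeeping one has to carry out carefully; everything else is a reworking of the methods of \ref{ss-non-klt-centres} and of [\ref{B-compl}, Proposition 4.6].
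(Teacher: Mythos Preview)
Your proposal has a genuine gap at what you yourself identify as the technical heart: the ``inverse-of-adjunction'' step. There is no mechanism that takes a prime divisor $T$ over $F$ with $a(T,F,\Theta_F+P_F+C|_F+tL_F)<\epsilon'$ and produces a prime divisor $T'$ over $X$ with $a(T',X,C+\Delta+tM)<\epsilon'$. Inversion of adjunction of the Kawakita type works for divisorial adjunction in codimension one; the construction in \ref{constr-adjunction-non-klt-centre} is much more indirect --- $\Theta_F$ is defined via lc thresholds on a dlt model $Y$, $P_F$ is only an $\R$-linear equivalence class, and the map $S\to F$ is a genuine contraction, not birational. A divisor over $F$ pulls back to something over $S\subset Y$, but its log discrepancy with respect to $(Y,\Gamma_Y+N_Y)$ bears no controlled relation to any log discrepancy of $(X,C)$. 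So the contradiction with (1) never materialises.

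The paper's proof takes a completely different route and never returns to $X$. It works entirely on $F$: using (3), (5), (10), (12) together with [\ref{B-compl}, Proposition 4.4] it places $(F,\Supp(\Theta_F+C_F+S_F+M_F))$ in a bounded family of log smooth couples $(\overline{F},\Sigma_{\overline{F}})$, with the pushforward $\overline{M}$ of $M_F$ having coefficients bounded by a fixed $c$. Writing $I_F=\Theta_F+P_F-\Lambda_F\sim_\R\Delta|_F$ and using the nefness (11) plus the negativity lemma, being $\epsilon'$-lc for the pair on $F$ is reduced to being sub-$\epsilon'$-lc for $(\overline{F},\tilde{C}_{\overline{F}}+I_{\overline{F}}+tL_{\overline{F}})$. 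The latter is then established by a convexity trick: hypothesis (13) gives $0\le J_{\overline{F}}\sim_\R 2tM_{\overline{F}}-(I_{\overline{F}}+S_{\overline{F}}+tL_{\overline{F}})$, and since $2ltM_{\overline{F}}$ has coefficients $\le 2ltc=\delta$, [\ref{B-compl}, Proposition 4.2] forces $(\overline{F},\Gamma_{\overline{F}}+l(I_{\overline{F}}+S_{\overline{F}}+J_{\overline{F}}+tL_{\overline{F}}))$ to be klt; interpolating with the $\epsilon$-lc pair $(\overline{F},\Gamma_{\overline{F}})$ using $(l-1)/l>\epsilon'/\epsilon$ then gives the $\epsilon'$-lc conclusion. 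The constant $t=\delta/(2lc)$ is fixed at the outset, not found by a limiting contradiction argument.
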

\begin{proof}
As pointed out in \ref{constr-adjunction-non-klt-centre}, the
coefficients of $\Theta_{F}$ are in a fixed DCC set $\Psi$ depending only on $d$.  
Let $\mathcal{P}$ be the set of couples and $c$ the number given by [\ref{B-compl}, Proposition 4.4], 
for the data 
$$
\dim F,v,\lambda:=\min\Psi^{>0}\cup \{\epsilon\}.
$$ 
Let $\delta>0$ be the number given by [\ref{B-compl}, Proposition 4.2] for $\mathcal{P},\epsilon$. 
Let $l\in\N$ be the smallest number such that $\frac{l-1}{l}>\frac{\epsilon'}{\epsilon}$; 
such $l$ exists because $\frac{\epsilon'}{\epsilon}<1$.
Let $t=\frac{\delta}{2lc}$. We will show that this $t$ satisfies the proposition.  
Note that $t$ depends on $\delta,l,c$ which in turn depend on $\mathcal{P},c,\epsilon,\epsilon'$ 
and these in turn depend on $d,v,\epsilon,\epsilon'$.\\

\emph{Step 1.}
In this step we introduce some basic notation.
We will assume $\dim G>0$ otherwise the statement is vacuous.  
Note that we are assuming that $C,M,S$ are independent of $G$ so 
$G$ being general it is not contained in $\Supp (M+C+S)$ (however, $\Delta$ depends on $G$ 
whose support contains $G$). On the other hand, by [\ref{B-compl}, Lemma 2.6], 
there is a log resolution $\phi\colon W\to X$ of 
$$
(X,\Supp (M+C+S))
$$ 
such that we can write 
$$
M_W:=\phi^*M= A_{W}+R_{W}
$$ 
where $A_W$ is the movable part of $|M_W|$, $|A_W|$ is based point free defining a birational contraction, 
and $R_W\ge 0$ is the fixed part.\\

\emph{Step 2.}
In this step we have a closer look at the adjunction formula given in the statement,  and the related divisors. 
First note that since $G$ is a general member of a covering family, 
$X$ is smooth near the generic point of $G$.

By [\ref{HMX2}, Theorem 4.2][\ref{B-compl}, Lemma 3.12] (by taking $B=0$), we can write $K_F+\Lambda_F=K_X|_F$ 
where $(F,\Lambda_{F})$ is sub-klt and 
$\Lambda_{F}\le \Theta_{F}$. On the other hand, since $G$ is not contained in $\Supp C$, 
the unique non-klt place of $(X,\Delta)$ with centre $G$ is also a unique non-klt place of 
$(X,C+\Delta)$ whose centre is $G$. Thus 
applying [\ref{B-compl}, Lemma 3.12]
once more (this time by taking $B=C$), we can write $K_F+\tilde{C}_F=(K_X+C)|_F$ where 
$(F,\tilde{C}_{F})$ is sub-$\epsilon$-lc. Note that  
$$
\tilde{C}_F=\Lambda_F+C|_F\le \Theta_F+C|_F.
$$\

\emph{Step 3.} 
Let $M_F:=M|_F$, $C_F:=C|_F$, and $S_F:=S|_F$.
In this step we show 
$$
(F, \Supp (\Theta_F+C_F+S_F+M_F))
$$ 
is log birationally bounded using [\ref{B-compl}, Proposition 4.4].
Since $G$ is a general member of a covering family, we can choose a log resolution $F'\to F$ of the above pair  
such that we have an induced morphism $F'\to W$ and that $|A_{F'}|$ defines a birational contraction 
where $A_{F'}:=A_W|_{F'}$. Thus $|A_{F}|$ defines a birational map where $A_F$ is the 
pushdown of $A_{F'}$. This in turn implies $|M_F|$ defines a birational map because $A_F\le M_F$. 

Moreover,  
$$
K_F+{C}_F+\Theta_F+P_F\sim_\R (K_X+C+\Delta)|_F
$$ 
is nef. In addition,  
$$
M_F-(K_F+{C}_F+S_F+\Theta_F+P_F)\sim_\R (M-(K_X+C+S+\Delta))|_F
$$
is big by the generality of $G$ and by applying Lemma \ref{l-restriction-big-divisors} to the family of subvarieties to which $G$ belongs. This in turn implies 
$$
M_F-(K_F+{C}_F+S_F+\Theta_F)
$$
is big as well as $P_F\ge 0$. 

On the other hand, 
by [\ref{B-compl}, Lemma 3.11], 
$$
\mu_D(\Theta_F+ C_F+S_F+M_F)\ge 1
$$ 
for any component $D$ of $C_F+S_F+M_F$ because  each non-zero coefficient of $C+S+M$ 
is $\ge 1$ by assumption (note that although the latter divisor may not be a $\Q$-divisor but the lemma 
still applies as its proof works for $\R$-divisors as well). 
Similarly, applying the lemma again,
$$
\mu_D(\Theta_F+ \frac{1}{\epsilon} (C_F+S_F))\ge 1
$$
for any component $D$ of $C_F+S_F$ because each non-zero coefficient of $\frac{1}{\epsilon} (C+S)$ 
is $\ge 1$ by assumption. In particular, 
the non-zero coefficients of $\Theta_F+C_F+S_F$ are $\ge \lambda=\min\Psi^{>0}\cup \{\epsilon\}$.

Now applying [\ref{B-compl}, Proposition 4.4] to $F, B_F:=\Theta_F+C_F+S_F, M_F$, 
 there is a projective log smooth couple 
$(\overline{F},{\Sigma}_{\overline{F}})\in \mathcal{P}$ 
and a birational map $\overline{F}\bir F$ satisfying: 
\begin{itemize}
\item  ${\Sigma}_{\overline{F}}$ contains the exceptional 
divisor of  $\overline{F}\bir F$ and the birational transform of $\Supp (\Theta_F+C_F+S_F+M_F)$, and 

\item if $f\colon F'\to F$ and $g\colon F'\to \overline{F}$ is a common resolution and 
$M_{\overline{F}}$ is the pushdown of $M_F|_{F'}$, then each coefficient of $M_{\overline{F}}$ is at most $c$.\\ 
\end{itemize}

\emph{Step 4.} 
In this step we compare log divisors on $F$ and $\overline{F}$.
 First define $ {\Gamma}_{\overline{F}}:=(1-\epsilon) {\Sigma}_{\overline{F}}$.
Let $K_{F'}+\tilde{C}_{F'}$ be the pullback of $K_{F}+\tilde{C}_{F}$ and let $K_{\overline{F}}+\tilde{C}_{\overline{F}}$
be the pushdown of $K_{F'}+\tilde{C}_{F'}$ to $\overline{F}$. 
We claim that $\tilde{C}_{\overline{F}}\le {\Gamma}_{\overline{F}}$. 
If $\tilde{C}_{\overline{F}}\le 0$, then the claim holds trivially. 
Assume  $\tilde{C}_{\overline{F}}$ has a component $D$ with positive coefficient. Then $D$ 
is either exceptional$/F$ or is a component of the  birational transform of $\tilde{C}_{F}$ with positive coefficient. 
In the former case, $D$ is a component of ${\Sigma}_{\overline{F}}$ because  ${\Sigma}_{\overline{F}}$
contains the exceptional divisor of  $\overline{F}\bir F$. In the latter case,  $D$ is a component of 
the birational transform of $\Theta_F+C_F$ because $\tilde{C}_F\le \Theta_F+C_F$ by Step 2, hence again 
$D$ is a component of ${\Sigma}_{\overline{F}}$ as it contains 
the birational transform of 
$
\Supp (\Theta_F+C_F).
$ 
Moreover, since $(F,\tilde{C}_{F})$ is sub-$\epsilon$-lc, 
the coefficient of $D$ in $\tilde{C}_{\overline{F}}$ is at most $1-\epsilon$, hence 
$\mu_D\tilde{C}_{\overline{F}}\le \mu_D{\Gamma}_{\overline{F}}$. We have then proved the claim 
$\tilde{C}_{\overline{F}}\le {\Gamma}_{\overline{F}}$.\\

\emph{Step 5.} 
In this step we define a divisor $I_F$ and compare singularities on $F$ and $\overline{F}$. 
Let 
$$
I_F:=\Theta_F+P_F-\Lambda_F.
$$ 
By Step 2, $I_F\ge 0$.  
First note 
$$
I_{{F}}=\Theta_F+P_F-\Lambda_F=K_F+\Theta_F+P_F-K_F-\Lambda_F 
$$
$$
\sim_\R  (K_X+\Delta)|_F-K_X|_F\sim_\R \Delta|_F.
$$

Pick $0\le L_F\sim_\R M_F$.  
Recalling $\tilde{C}_F=C_F+\Lambda_F$ from Step 2, we see that 
$$
K_F+\tilde{C}_F+I_F+tL_F=K_F+C_F+\Lambda_F+\Theta_F+P_F-\Lambda_F+tL_F
$$
$$
=K_F+C_F+\Theta_F+P_F+tL_F\sim_\R (K_X+C+\Delta+tM)|_F
$$ 
is nef.

Let $I_{\overline{F}}=g_*f^*I_F$, $S_{\overline{F}}=g_*f^*S_F$, and $L_{\overline{F}}=g_*f^*L_F$.
Then by the previous paragraph and by the negativity lemma,
$$
f^*(K_F+\tilde{C}_F+I_F+tL_F)\le g^*(K_{\overline{F}}+\tilde{C}_{\overline{F}}+I_{\overline{F}}+tL_{\overline{F}})
$$
which implies that 
$$
(F,\tilde{C}_F+I_F+tL_F)
$$
is sub-$\epsilon'$-lc if 
 $$
 ({\overline{F}},\tilde{C}_{\overline{F}}+I_{\overline{F}}+tL_{\overline{F}})
 $$ 
 is sub-$\epsilon'$-lc.\\

\emph{Step 6.} 
In this step we finish the proof using [\ref{B-compl}, Proposition 4.2]. 
Note  
$$
2tM_F-(I_{F}+S_F+tL_F)\sim_\R 2tM_F-\Delta|_F-S|_F-tM_F
$$
$$
\sim_\R (tM-\Delta-S)|_F
$$
is big by the generality of $G$ and by Lemma \ref{l-restriction-big-divisors} as $tM-\Delta-S$ 
is big by assumption.
Thus there is  
$$
0\le J_{\overline{F}}\sim_\R 2tM_{\overline{F}}-(I_{\overline{F}}+S_{\overline{F}}+tL_{\overline{F}}).
$$

By construction, 
\begin{itemize}
\item $({\overline{F}},{\Gamma}_{\overline{F}})$ is $\epsilon$-lc, 
\item $({\overline{F}},{\Sigma}_{\overline{F}}=\Supp {\Gamma}_{\overline{F}})\in \mathcal{P}$, 
\item $\Supp M_{\overline{F}}\subseteq {\Sigma}_{\overline{F}}$,  
\item  we have 
$$
2ltM_{\overline{F}}\sim_\R lI_{\overline{F}}+lS_{\overline{F}}+lJ_{\overline{F}}+tlL_{\overline{F}},
$$ 
and 
\item the coefficients of $2ltM_{\overline{F}}$ are bounded from above by $2ltc=\delta$.
\end{itemize}
By our choice of $\delta$ and by applying
[\ref{B-compl}, Proposition 4.2] to 
$$
({\overline{F}},{\Gamma}_{\overline{F}}), ~~~ {\Sigma}_{\overline{F}}, ~~~2ltM_{\overline{F}}, ~~~lI_{\overline{F}}+lS_{\overline{F}}+lJ_{\overline{F}}+tlL_{\overline{F}}
$$
(these are $(X,B), T, N, L$ in the notation of [\ref{B-compl}, Proposition 4.2], respectively)
we deduce that 
$$
({\overline{F}},{\Gamma}_{\overline{F}}+lI_{\overline{F}}+lS_{\overline{F}}+lJ_{\overline{F}}+ltL_{\overline{F}})
$$
is klt, hence 
$$
({\overline{F}},{\Gamma}_{\overline{F}}+lI_{\overline{F}}+ltL_{\overline{F}})
$$
is klt as well. 

Therefore, keeping in mind that $({\overline{F}},{\Gamma}_{\overline{F}})$ is $\epsilon$-lc we see that 
$$
({\overline{F}},{\Gamma}_{\overline{F}}+I_{\overline{F}}+tL_{\overline{F}})
$$ 
is ${\epsilon'}$-lc by [\ref{B-BAB}, Lemma 2.3] because 
$$
{\Gamma}_{\overline{F}}+I_{\overline{F}}+tL_{\overline{F}}
=\left(\frac{l-1}{l}\right){\Gamma}_{\overline{F}}
+\frac{1}{l}({\Gamma}_{\overline{F}}+lI_{\overline{F}}+ltL_{\overline{F}})
$$
and because $(\frac{l-1}{l})\epsilon>\epsilon'$.
This then 
 implies that 
 $$
 ({\overline{F}},\tilde{C}_{\overline{F}}+I_{\overline{F}}+tL_{\overline{F}})
 $$ 
 is sub-$\epsilon'$-lc as $\tilde{C}_{\overline{F}}\le \Gamma_{\overline{F}}$ by Step 4.
Therefore, by Step 5, 
$$
(F,\tilde{C}_F+I_F+tL_F)
$$
is also sub-$\epsilon'$-lc. In other words,
$$
(F,C_F+\Theta_F+P_F+tL_F)
$$ 
is  $\epsilon'$-lc because 
$$
\tilde{C}_F+I_F+tL_F=C_F+\Theta_F+P_F+tL_F
$$
as we saw in Step 5.

\end{proof}

It is clear from the proof that if we replace the nef condition of $K_X+C+\Delta$ with $K_X+C+S+\Delta$ being nef, 
then we can deduce that  
$$
(F,C_F+S_F+\Theta_F+P_F+tL_F)
$$ 
is $\epsilon'$-lc.

\subsection{Descent of divisor coefficients along fibrations}
The adjunction formula in the next lemma is adjunction for fibrations, cf. 
[\ref{kaw-subadjuntion}][\ref{ambro-adj}][\ref{B-compl}, 3.4].

\begin{lem}\label{l-descent-weil-divs-fib}
Let $\epsilon$ be a positive real number. 
Then there is a natural number $l$ depending only on $\epsilon$ satisfying the following. 
Let $(X,B)$ be a klt pair and $f\colon X\to Z$ be a contraction. 
Assume that 
\begin{itemize}
\item $K_X+B\sim_\R 0/Z$,

\item $K_X+B\sim_\R f^*(K_Z+B_Z+M_Z)$ is the adjuction formula for fibrations,

\item $E=f^*L$ for some $\R$-Cartier $\R$-divisor $L$ on $Z$,

\item $\Phi\subset \R$ is a subset closed under multiplication with elements of $\N$,  
 
\item the coefficients of $E$ are in $\Phi$, and 

\item any component $D$ of $L$ has coefficient $\le 1-\epsilon$  in $B_Z$. 
\end{itemize}
Then the coefficients of $lL$ are in $\Phi$.
\end{lem}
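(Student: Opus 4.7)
The plan is to reduce the statement to a local computation at the generic point of each prime component of $L$, exploit that $Z$ is regular in codimension one to rewrite $L$ as an honest multiple of a prime divisor locally, and then bound the ramification of $f$ over that component using the canonical bundle formula together with the hypothesis on $B_Z$.

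First I would fix a prime divisor $D$ on $Z$ with $r:=\mu_D L\neq 0$. Since $Z$ is normal it is regular at the generic point $\eta_D$, so $D$ is Cartier in a neighbourhood of $\eta_D$, and consequently $f^*D=\sum_i a_iS_i$ locally there, where the $S_i$ are the prime divisors on $X$ dominating $D$ and each $a_i$ is a positive integer. Near $\eta_D$ the only component of $L$ passing through $\eta_D$ is $D$ itself, so there $L=rD$ and hence $E=f^*L=r\sum_i a_iS_i$ near the corresponding generic fibre. Reading off coefficients, $\mu_{S_i}E=ra_i$, which by hypothesis lies in $\Phi$ for every $i$.

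Next I would invoke the definition of the canonical bundle formula (as recalled in [\ref{B-compl}, 3.4]): $\mu_DB_Z=1-t_D$, where $t_D$ is the lc threshold of $f^*D$ with respect to $(X,B)$ over $\eta_D$. Since sub-lc-ness over $\eta_D$ for $(X,B+tf^*D)$ amounts to the inequalities $\mu_{S_i}B+ta_i\le 1$, this gives
\[
t_D=\min_i\frac{1-\mu_{S_i}B}{a_i}.
\]
The hypothesis $\mu_DB_Z\le 1-\epsilon$ forces $t_D\ge \epsilon$, while the klt assumption gives $0\le \mu_{S_i}B<1$. Combining these, $a_i\le (1-\mu_{S_i}B)/\epsilon\le 1/\epsilon$ for each $i$; in particular every $a_i$ lies in $\{1,2,\dots,\lfloor 1/\epsilon\rfloor\}$.

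To finish, I would set $l:=\mathrm{lcm}\{1,2,\dots,\lfloor 1/\epsilon\rfloor\}$, which depends only on $\epsilon$. For any $i$ one has $a_i\mid l$, so $l/a_i\in\N$, and
\[
lr=\frac{l}{a_i}\cdot(ra_i)\in\Phi
\]
because $\Phi$ is closed under multiplication by natural numbers. Hence every coefficient of $lL$ lies in $\Phi$, proving the claim. The delicate point is the local reduction: one must check that the $a_i$ really are positive integers (which uses regularity of $Z$ in codimension one) and that $f^*L=rf^*D$ near $\eta_D$ is legitimate even though $L$ is only $\R$-Cartier, not $\Q$-Cartier; once this local picture is secured the rest is elementary divisibility combined with the standard lc-threshold description of $B_Z$.
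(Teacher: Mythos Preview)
Your proof is correct and follows essentially the same approach as the paper's: localise at the generic point of a component $D$ of $L$, use the lc-threshold description of $\mu_D B_Z$ together with the bound $\mu_D B_Z\le 1-\epsilon$ to bound the multiplicities $a_i$ of $f^*D$ by $1/\epsilon$, and then exploit closure of $\Phi$ under multiplication by natural numbers. The paper takes $l=\lceil 1/\epsilon\rceil!$ where you take $l=\mathrm{lcm}\{1,\dots,\lfloor 1/\epsilon\rfloor\}$; both work. One small imprecision: your displayed formula $t_D=\min_i(1-\mu_{S_i}B)/a_i$ is in general only an upper bound for $t_D$ (the lc threshold could be dictated by divisors on a resolution rather than by the $S_i$), but you only use the direction $t_D\ge\epsilon\Rightarrow \mu_{S_i}B+\epsilon a_i\le 1$, which is the correct one and is exactly how the paper argues.
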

\begin{proof}
We can assume $L\neq 0$.   
Let $D$ be a component of $L$ and let $u$ be its coefficient in $L$. 
Shrinking $Z$ around the generic of $D$ we can assume that $D$ is the only component of $L$.
Let $t$ be the lc threshold of $f^*D$ with 
respect to $(X,B)$ over the generic point of $D$. By definition of adjunction for fibrations, 
the coefficient of $D$ in $B_Z$ is $1-t$.
Since $1-t\le 1-\epsilon$ by assumption,  $t\ge \epsilon$. 
So $(X,B+\epsilon f^*D)$ is lc over 
the generic point of $D$. Shrinking $Z$ around the generic point of 
$D$ again we can assume that 
$(X,B+\epsilon f^*D)$ is lc everywhere and that $f^*D$ is an integral divisor. 
Thus the coefficients of $f^*D$ are bounded 
from above by $v:=\lceil \frac{1}{\epsilon} \rceil$. Now by assumption, $E=f^*L=uf^*D$ has coefficients in $\Phi$. 
If $C$ is a component of $E$ with coefficient $e$ and if $h$ is its coefficient in 
$f^*D$, then $e=uh\in \Phi$ where $h\le v$. 
Therefore, letting $l=v!$ we see that $ul\in \Phi$ as $ul$ is a multiple of $e$ and 
$\Phi$ is closed under multiplication with elements of $\N$. 

\end{proof}

\subsection{Descent of divisor coefficients to non-klt centres}
Next we will show that coefficients of divisors restricted to non-klt centres behave well under suitable conditions, 
as in the next proposition.
The first half of the proof of the proposition is similar to that of [\ref{B-compl}, Proposition 3.15] but 
the second half is very different. The proposition is one of the key elements which will allow us to consider 
birational boundedness of divisors in a vastly more general setting than that considered in [\ref{B-compl}] 
which treated only anti-canonical divisors of Fano varieties.

\begin{prop}\label{p-non-klt-centres-restriction-weil-div}
Let $\epsilon$ be a positive real number. 
Then there is a natural number $q$ depending only on 
$\epsilon$ satisfying the following.
Let $(X,B), \Delta, G,F, \Theta_F,P_F$ be as in \ref{constr-adjunction-non-klt-centre}. 
Assume in addition that 
\begin{itemize}
\item $P_F$ is big and for any choice of $P_F\ge 0$ in its $\R$-linear equivalence class 
the pair $(F,\Theta_F+P_F)$ is $\epsilon$-lc, 

\item $\Phi\subset \R$ is a subset closed under addition,  

\item $E$ is an $\R$-Cartier $\R$-divisor on $X$ with coefficients in $\Phi$, and 

\item $G\not\subset \Supp E$.\\
\end{itemize}
Then $qE|_F$ has coefficients in $\Phi$.  

\end{prop}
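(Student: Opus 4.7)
The strategy is to reduce to a divisor-descent problem along the fibration $h\colon S\to F$ induced by the dlt model in \ref{constr-adjunction-non-klt-centre}, and to apply Lemma \ref{l-descent-weil-divs-fib}. Construct the dlt model $\pi\colon Y\to X$ of $(X,B+\Delta)$ as in \ref{constr-adjunction-non-klt-centre}, with $S\subset Y$ the unique component of $\rddown{\Gamma_Y}$ mapping onto $G$ and $h\colon S\to F$ the induced contraction. Divisorial adjunction yields
\[
K_S+\Gamma_S+N_S=(K_Y+\Gamma_Y+N_Y)|_S\sim_{\R} h^*(K_F+\Theta_F+P_F),
\]
a fibration-type adjunction for $h$. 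Since $G\not\subset\Supp E$, the pullback $E_Y:=\pi^*E$ does not contain $S$ in its support, so $E_S:=E_Y|_S$ is a well-defined $\R$-Cartier $\R$-divisor on $S$.

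It suffices to show, for each prime $D\subset F$ with $u:=\mu_D(E|_F)>0$, that $qu\in\Phi$ for some $q$ depending only on $d$ and $\epsilon$. Localise near the generic point of $D$ so that $D$ becomes Cartier on $F$; then $E|_F=uD$ and $E_S=u\cdot h^*D$ on this neighbourhood. Writing $h^*D=\sum a_iS_i$ over the primes $S_i$ of $S$ above $D$, one has $\mu_{S_i}(E_S)=ua_i$. By the construction of $\Theta_F$ in \ref{constr-adjunction-non-klt-centre}, some index $i_0$ satisfies $\mu_{S_{i_0}}\Sigma_S+(1-\mu_D\Theta_F)a_{i_0}=1$. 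Under the hypothesis that $(F,\Theta_F+P_F)$ is $\epsilon$-lc for every effective representative of $P_F$—such representatives exist because $P_F$ is big—any such $P_F'\ge 0$ has $\mu_D P_F'\ge 0$, forcing $\mu_D\Theta_F\le 1-\epsilon$, whence $a_{i_0}\le 1/\epsilon$. This portion of the argument follows the pattern of [\ref{B-compl}, Proposition 3.15].

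The second half of the proof, which departs from [\ref{B-compl}], invokes Lemma \ref{l-descent-weil-divs-fib}. Using bigness of $P_F$, pick an effective representative and perturb $(S,\Gamma_S+N_S)$ to a klt pair $(S,B_S)$ with $K_S+B_S\sim_{\R}0/F$: the ample part of $P_F$ pulls back via $h$ to an ample class over $F$ that allows us to absorb the reduced component of the dlt boundary and the vertical part $N_S$ into a klt configuration, without altering the fibration class. Since $\mu_D\Theta_F'\le 1-\epsilon$ along every component $D$ of $\Supp(E|_F)$ in the resulting adjunction, and $\Phi$, being closed under addition, is closed under $\N$-multiplication, Lemma \ref{l-descent-weil-divs-fib} applied to $h$, $(S,B_S)$, $E_S=h^*(E|_F)$ and $L=E|_F$ yields an integer $l=l(\epsilon)$ with $l\cdot(E|_F)$ having coefficients in $\Phi$; we take $q=l$. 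The main obstacle is the klt-perturbation step: one must produce $(S,B_S)$ with the required $\sim_{\R}0/F$ relation while preserving the identification $E_S=h^*(E|_F)$ and the coefficient-in-$\Phi$ property of $E_S$. Here the bigness of $P_F$—the key new ingredient compared to the Fano-specific setting of [\ref{B-compl}]—supplies the positivity needed to carry out the construction.
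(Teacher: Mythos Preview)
Your overall architecture---pass to the dlt model $Y$, restrict to $S$, then descend along $h\colon S\to F$ via Lemma \ref{l-descent-weil-divs-fib}---matches the paper's. But there is a genuine gap: the central hypothesis of Lemma \ref{l-descent-weil-divs-fib}, namely that the coefficients of $E_S$ lie in $\Phi$, is never verified, and in fact it is false without a preliminary multiplier. You write $\mu_{S_i}(E_S)=u\,a_i$ with $u=\mu_D(E|_F)$; but $u$ is precisely the unknown you are trying to control, so this expression gives no information about membership in $\Phi$. Your bound $a_{i_0}\le 1/\epsilon$ for a single index is neither sufficient (you would need all $a_i$) nor of the right type (you need integrality, not just a bound, to conclude $u\,a_i\in\Phi$ from $u\in\Phi$).

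The paper fills this gap in two steps you are missing. First, it shows $G$ is an \emph{isolated} non-klt centre of $(X,B+\Delta)$, using the hypothesis that $(F,\Theta_F+P_F)$ is $\epsilon$-lc for every effective choice of $P_F$; this forces $(Y,\Gamma_Y+N_Y)$ to be plt near $S$, so no exceptional divisor of $\pi$ other than $S$ meets $S$. Consequently $E_Y=\pi^*E$ coincides with the birational transform of $E$ near $S$, so its coefficients there lie in $\Phi$. Second, for each vertical prime $V\subset S$ the paper bounds the Cartier index of $K_Y+S$ (hence of every prime divisor on $Y$) near the generic point of $V$ by $\lfloor 1/\epsilon\rfloor$, again using the $\epsilon$-lc hypothesis via [\ref{B-compl}, Lemma 3.14(1)]. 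Setting $p=\lfloor 1/\epsilon\rfloor!$ then gives $\mu_V(pE_S)=\sum e_i\cdot(\text{nonnegative integer})$ with $e_i\in\Phi$, so $pE_S$ has coefficients in $\Phi$. Only then does the descent lemma apply (to $pE_S$), yielding the final $q=lp$.

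A secondary point: your ``klt-perturbation'' of $(S,\Gamma_S+N_S)$ is both unjustified and unnecessary. Once the isolated-centre step is done, $(Y,\Gamma_Y+N_Y)$ is plt near $S$, so $(S,\Gamma_S+N_S)$ is already klt and Lemma \ref{l-descent-weil-divs-fib} applies directly. What does require work---and where the bigness of $P_F$ genuinely enters---is bounding the discriminant $\Omega_F$ of $(S,\Gamma_S+N_S)\to F$ by $1-\epsilon$; note $\Omega_F\ge\Theta_F$, so the bound on $\Theta_F$ alone is insufficient. The paper handles this by a convexity argument mixing $\Omega_F+M_F$ and $\Theta_F+P_F$.
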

\begin{proof}
\emph{Step 1}.
 In this step we show $G$ is an isolated non-klt centre of $(X,B+\Delta)$.
We use the notation of \ref{constr-adjunction-non-klt-centre}. 
Remember that $(X,B+\Delta)$ is lc near the generic point of $G$. 
Also recall that $\Gamma+N=B+\Delta$, 
$$
K_Y+\Gamma_Y+N_Y=\pi^*(K_X+\Gamma+N),
$$
and $\rddown{\Gamma_Y}$ has a unique component $S$ mapping onto $G$ 
where $\pi$ denotes $Y\to X$. 

 Assume $G$ is not an isolated non-klt centre. Then some non-klt centre $H\neq G$
of $(X,\Gamma+N)$ intersects $G$. Applying [\ref{B-compl}, Lemma 3.14(2)], we can choose 
$P_F\ge 0$ in its $\R$-linear equivalence class such that 
the pair $(F,\Theta_F+P_F)$ is not $\epsilon$-lc, a contradiction. Therefore, 
$(Y,\Gamma_Y+N_Y)$ is plt near $S$ because  
any non-klt centre other than $S$ would map to a non-klt centre on $X$ disjoint from $G$ because $G$ is an isolated non-klt centre and because $S$ is the unique non-klt place with centre $G$.
In particular, no component of $\rddown{\Gamma_Y}-S$ intersects $S$ which implies that 
any prime exceptional divisor $J\neq S$ of $\pi$ is disjoint from $S$ because all the prime 
exceptional divisors of $\pi$ are components of $\rddown{\Gamma_Y}$.\\

\emph{Step 2.}
In this step we show that there is a 
natural number $p$ depending only on $\epsilon$ such that $pE_S$ has coefficients in $\Phi$ 
where $E_S=E_Y|_S$ and $E_Y=\pi^*E$.
Note that $E_S$ and $E|_F$ are well-defined as $\Q$-Weil divisors as $\Supp E$ does not contain $G$. 
Moreover, $S$ is not a component of $E_Y$ and near $S$, $E_Y$ is just the birational transform of 
$E$ because no exceptional divisor $J\neq S$ of $\pi$ intersects $S$, by Step 1.
Thus the coefficients of $E_Y$ near $S$ belong to $\Phi$. 

Let $V$ be a prime divisor on $S$.  
If $V$ is horizontal over $F$, then $V$ is not a component of $E_S$ because $E_S$ is vertical over $F$. 
Assume $V$ is vertical over $F$. We claim that if $T\neq S$ is any prime divisor on $Y$, then 
 $pT$ is Cartier near the generic point of $V$, 
for some natural number $p$ depending only on $\epsilon$.
Let $l$ be the Cartier index of $K_Y+S$ near the generic point of $V$. 
Then 
$$
\mu_V(\Gamma_S+N_S)\ge 1-\frac{1}{l}
$$ 
and the Cartier index of $T$ near the generic point of $V$ 
divides $l$, by [\ref{Shokurov-log-flips}, Proposition 3.9]. 

Assume $\frac{1}{l}< \epsilon$. Then 
$$
\mu_V(\Gamma_S+N_S)>1-\epsilon,
$$ 
hence $(S,\Gamma_S+N_S)$ is not $\epsilon$-lc along $V$. But then since $V$ is vertical over $F$,  
by [\ref{B-compl}, Lemma 3.14(1)],
we can choose $P_F\ge 0$ so that $(F,\Theta_F+P_F)$ is not $\epsilon$-lc, a contradiction. 
Therefore, $\frac{1}{l}\ge \epsilon$, so $\frac{1}{\epsilon}\ge l$. 
Now let $p=\rddown{\frac{1}{\epsilon}}!$. Then $pT$ is Cartier near the generic point of $V$ as claimed.

Let $E_1,\dots,E_r$ be the irreducible components of $E_Y$ which intersect $S$ and let $e_1,\dots,e_r$ 
be their coefficients. Then 
$$
\mu_V(pE_S)=\mu_V(\sum pe_iE_i|_S)=\sum e_i(\mu_VpE_i|_S).
$$ 
Since $\mu_VpE_i|_S$ 
are positive integers, since $e_i\in \Phi$, and since $\Phi$ is closed under addition, we see that 
 $\mu_V(pE_S)\in \Phi$.\\
 
\emph{Step 3}.
In this step we consider adjunction for $(S,\Gamma_S+N_S)$ over $F$. 
Recall that $S\to F$ is denoted by $h$ which is a contraction. 
Since $(Y,\Gamma_Y+N_Y)$ is plt near $S$, $(S,\Gamma_S+N_S)$ is klt. 
As $K_S+\Gamma_S+N_S\sim_\R 0/F$, we have the adjunction 
formula
$$
K_S+\Gamma_S+N_S\sim_\R h^*(K_F+\Omega_F+M_F)
$$
where $\Omega_F$ is the discriminant divisor and $M_F$ is the moduli divisor which is 
pseudo-effective (cf. [\ref{B-compl}, 3.4]). 
Recall from \ref{constr-adjunction-non-klt-centre} that $\Theta_F$ is defined by taking lc 
thresholds with respect to some boundary $\Sigma_S\le \Gamma_S+N_S$. Then $\Theta_F\le \Omega_F$ 
by definition of adjunction. 

We claim that the coefficients of $\Omega_F$ do not exceed $1-\epsilon$. Assume not. 
Pick a small real number $t>0$ such that some coefficient of 
$$
\Xi_F:=\Theta_F+(1-t)(\Omega_F-\Theta_F)=(1-t)\Omega_F+t\Theta_F
$$ 
exceeds $1-\epsilon$.  By construction, 
$$
K_F+\Omega_F+M_F\sim_\R (K_X+\Gamma+N)|_F=(K_X+B+\Delta)|_F\sim_\R K_F+\Theta_F+P_F,
$$ 
hence 
$$
\Omega_F+M_F\sim_\R \Theta_F+P_F.
$$ 
Since $M_F$ is pseudo-effective and $P_F$ is big, we can find 
$$
0\le R_F\sim_\R (1-t)M_F+tP_F.
$$
Then 
$$
\Xi_F+R_F\sim_\R (1-t)\Omega_F+t\Theta_F+(1-t)M_F+tP_F
$$
$$
=(1-t)(\Omega_F+M_F)+t(\Theta_F+P_F)\sim_\R \Theta_F+P_F
$$
which in particular means $K_F+\Xi_F+R_F$ is $\R$-Cartier. 

Now $(F,\Xi_F+R_F)$ is not $\epsilon$-lc by our choice of $\Xi_F$. 
On the other hand, since 
$$
\Xi_F-\Theta_F= (1-t)(\Omega_F-\Theta_F)\ge 0,
$$ 
choosing 
$$
0 \le P_F:=\Xi_F-\Theta_F+R_F
$$
we see that
$$
(F,\Theta_F+P_F)=(F,\Xi_F+R_F)
$$
is not $\epsilon$-lc, a contradiction. This proves the claim that 
 the coefficients of $\Omega_F$ do not exceed $1-\epsilon$.\\

\emph{Step 4.}
In this step we finish the proof. By Step 2, the coefficients of $pE_S$ are in $\Phi$. 
Moreover, by construction, 
$$
pE_S=h^*(pE|_F).
$$ 
Also since $\Phi$ is closed under addition, it is closed under multiplication with 
elements of $\N$. Thus applying Lemma \ref{l-descent-weil-divs-fib} to $(S,\Gamma_S+N_S)\to F$ and $pE_S$, 
there is a natural number $l$ depending only on 
$\epsilon$ such that the coefficients of $lpE|_F$ are in $\Phi$. Now let $q=lp$.

\end{proof}

In practice examples of $\Phi$ include $\Phi=\Z$ and $\Phi=\{r\in \R \mid r\ge \delta\}$ 
for some $\delta$.


\section{\bf Birational boundedness on $\epsilon$-lc varieties}

In this section we treat birational boundedness of linear systems of nef and big  
divisors on $\epsilon$-lc varieties which serves as the basis for the subsequent sections. 

\subsection{Main result}
The following theorem is a more general form of \ref{t-eff-bir-nefbig-weil} and the main result of this section.
 
\begin{thm}\label{t-eff-bir-general}
Let $d$ be a natural number and $\epsilon,\delta$ be positive real numbers. 
Then there exists a natural number $m$ depending only on $d,\epsilon,\delta$ satisfying the following. Assume 
\begin{itemize}
\item $X$ is a projective $\epsilon$-lc variety of dimension $d$,

\item $N$ is a nef and big $\R$-divisor on $X$, 

\item $N-K_X$ is pseudo-effective, and 

\item $N=E+R$ where $E$ is integral and pesudo-effective and $R\ge 0$ with coefficients in $\{0\}\cup [\delta,\infty)$.

\end{itemize}
\vspace{0.2cm}
Then $|m'N+L|$ and $|K_X+m'N+L|$ define birational maps for any integral pseudo-effective divisor $L$ 
and for any natural number $m'\ge m$.
\end{thm}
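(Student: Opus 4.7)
The plan is to follow the Hacon--M$^{\rm c}$Kernan--Xu strategy of producing non-klt centres at pairs of general points and then cutting them down via adjunction, using the machinery developed in Section~3. Throughout, $\epsilon'$, $q$, and all the numerical constants encountered will be shown to depend only on $d,\epsilon,\delta$, so the target bound $m$ will too.

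\textbf{Step 1 (Reduction to potential birationality).} By the observation that $|K_X+\lceil D\rceil|$ is birational whenever $D$ is potentially birational (\ref{ss-lin-systems}/\ref{ss-non-klt-centres}), and since adding any pseudo-effective integral divisor to a potentially birational divisor preserves birationality of the associated linear system (one still gets a non-klt centre at a general point after replacing $\Delta$ by $\Delta+0$), it suffices to prove that $cN$ is potentially birational for some $c=c(d,\epsilon,\delta)$. Both $|m'N+L|$ and $|K_X+m'N+L|$ then follow by writing $m'N+L-K_X=cN+(m'-c)N+L-K_X$ and using that $(m'-c)N+L-K_X$ is pseudo-effective for $m'\geq c+1$ (we absorb one extra copy of $N$ to cancel $K_X$, using $N-K_X$ pseudo-effective).

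\textbf{Step 2 (Induction on $d$; volume lower bound).} Argue by induction on $d$. The case $d=1$ is a direct Riemann--Roch check: the $\epsilon$-lc hypothesis is automatic, while $N-K_X$ pseudo-effective and $N=E+R$ with $E$ integral and $R$ having coefficients $\geq\delta$ give $\deg N\geq\max(1,2g-2)$ (up to a $\delta$-correction), so $|mN|$ is very ample for a bounded $m$. For the inductive step we first establish a uniform lower bound $\vol(N)\geq v_0(d,\epsilon,\delta)>0$; this is bootstrapped from the conclusion itself, since potential birationality of $cN$ forces $\vol(cN)\geq 1$, hence $\vol(N)\geq c^{-d}$. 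Concretely, one runs the argument below with the volume hypothesis as an inductive assumption and observes that at the end the required lower bound has been proved.

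\textbf{Step 3 (Creating and cutting down a non-klt centre).} Fix a pair $x,y$ of general closed points of $X$. Choose a bounded $k$ such that $\vol(kN)>(2d)^d$, which is possible by Step~2. Apply \ref{ss-non-klt-centres}(2) to $D=kN$ to produce $0\leq\Delta\sim_{\Q}kN$ and a non-klt centre $G$, in a bounded family of subvarieties of $X$, which is the centre of the unique non-klt place of $(X,\Delta)$ through $x$, with $(X,\Delta)$ not klt at $y$. Let $F$ denote the normalisation of $G$, and write the adjunction
\[
K_F+\Theta_F+P_F\sim_{\R}(K_X+\Delta)|_F
\]
of \ref{constr-adjunction-non-klt-centre}. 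If $\dim G=0$, Kawamata--Viehweg vanishing together with Step~1 concludes the argument. If $\dim G>0$, I want to invoke \ref{ss-non-klt-centres}(3) with auxiliary divisor $\lambda N$ to replace $G$ by a proper subvariety $G^{(1)}\subsetneq G$, which requires $\vol((\lambda N)|_F)>d^d$ for some bounded $\lambda$.

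\textbf{Step 4 (Applying induction on $F$).} To secure this volume bound I apply the inductive hypothesis on $F$. The required data transfer as follows:
\begin{itemize}
\item Proposition~\ref{p-bnd-sing-on-non-klt-centre} (with an appropriate choice of $M,C,S$ using $kN$ and its bounded volume on $F$ coming from the bounded family of $G$) yields that $(F,\Theta_F+P_F)$ is $\epsilon'$-lc for some $\epsilon'=\epsilon'(d,\epsilon)>0$.
\item Proposition~\ref{p-non-klt-centres-restriction-weil-div} applied with $\Phi=\{0\}\cup[\delta,\infty)$ (which is closed under addition) gives that $qR|_F$ has coefficients in $\Phi$; similarly $qE|_F$ is integral up to a bounded denominator, providing a decomposition $qN|_F=E_F+R_F$ of the required shape on $F$.
\item Since $P_F$ is pseudo-effective and $N-K_X$ is pseudo-effective, a short manipulation using the adjunction above shows that $N|_F-K_F$ is pseudo-effective.
\end{itemize}
By induction there is a bounded $m_1$ with $|m_1N|_F|$ birational, whence $\vol(N|_F)\geq m_1^{-\dim F}$. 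Choosing $\lambda$ a bounded multiple of $m_1$ gives $\vol((\lambda N)|_F)>d^d$, so \ref{ss-non-klt-centres}(3) produces $\Delta^{(1)}\sim_{\Q}(k+2\lambda)N$ with a strictly smaller non-klt centre $G^{(1)}\subsetneq G$. After at most $d$ such cutting-down steps we reach $\dim G=0$, and a $\Q$-divisor $\sim_{\Q}cN$ witnessing potential birationality for a bounded $c$.

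\textbf{Main obstacle.} The delicate point is the simultaneous induction between the lower bound on $\vol(N)$ (needed to create the first non-klt centre) and the upper bound on $\vol(N|_F)$ (needed to apply Proposition~\ref{p-bnd-sing-on-non-klt-centre}), together with the DCC control on the coefficients of $R|_F$ via Proposition~\ref{p-non-klt-centres-restriction-weil-div}. Verifying that all the hypotheses of the latter two propositions are met at each cutting-down stage --- in particular that the divisor $M$ playing the role of the nef big auxiliary in \ref{p-bnd-sing-on-non-klt-centre} can be taken to be a bounded multiple of $N$ with the right boundedness of $\vol(M|_F)$ (from the bounded family of $G$) and the right positivity $M-(K_X+C+S+\Delta)$ big --- is the technical core of the argument, and is where the bulk of the work lies.
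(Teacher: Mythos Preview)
Your overall strategy — cut down non-klt centres at general points and apply induction on $F$ via Propositions~\ref{p-bnd-sing-on-non-klt-centre} and~\ref{p-non-klt-centres-restriction-weil-div} — is the right engine, but two structural pieces are missing, and the paper's proof is organised quite differently to supply them.

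\textbf{The circular volume bound.} Your Step~2 claims a uniform lower bound $\vol(N)\ge v_0$ ``bootstrapped from the conclusion itself''. This is genuinely circular: you need $\vol(kN)>(2d)^d$ for some bounded $k$ \emph{before} you can create the first non-klt centre, so you cannot deduce it from potential birationality of $cN$, which is exactly what you are trying to prove. The paper avoids this by never proving a direct volume lower bound. Instead it argues by contradiction with a sequence $(X_i,N_i)$ and proves a dichotomy (Proposition~\ref{p-eff-bir-nefbig-weil-1}): if $m_i$ is the least integer with $|m_iN_i|$ birational, then either $m_i$ or $\vol(m_iN_i)$ is bounded. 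The sequence structure lets one compare the growth rates $m_i/n_i$, $l_i/n_i$, $m_i/l_i$ and derive a contradiction; this is not something a direct argument at a single $X$ can reproduce.

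\textbf{Missing positivity of $K_X+N$.} The cutting-down argument needs condition~(13) of Proposition~\ref{p-bnd-sing-on-non-klt-centre}, namely $tM-\Delta-S$ big, and in the actual application (Step~5 of the proof of Proposition~\ref{p-eff-bir-nefbig-weil-1}) this unwinds to requiring $N+K_X$ big. You have only $N-K_X$ pseudo-effective, and nothing in your outline produces the other sign. The paper's fix is Lemma~\ref{l-peff-threshold}: using BAB in dimension $d$, there is a bounded $l$ with $K_X+lN$ big. One then replaces $N$ by (roughly) $K_X+3lN$, runs an MMP, and lands in the setting where \emph{both} $N\pm K_X$ are big, so Propositions~\ref{p-eff-bir-nefbig-weil-1} and~\ref{p-eff-bir-nefbig-weil-2} apply. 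Without this step your verification of the hypotheses of Proposition~\ref{p-bnd-sing-on-non-klt-centre} in Step~4 cannot be completed.

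In short, the paper's route is: (i) use BAB (Lemma~\ref{l-peff-threshold}) to arrange $K_X+lN$ big; (ii) prove the ``$m$ bounded or $\vol(mN)$ bounded'' dichotomy by a sequence/contradiction argument (Proposition~\ref{p-eff-bir-nefbig-weil-1}); (iii) upgrade to ``$m$ bounded'' via log birational boundedness and an lc-threshold bound (Proposition~\ref{p-eff-bir-nefbig-weil-2}); (iv) deduce the theorem. Your plan collapses (i)--(iii) into a single direct induction, and that collapse is where the gaps appear.
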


Special cases of the theorem are when $R=0$ which is the statement of \ref{t-eff-bir-nefbig-weil}, and 
when $E=0$ in which case $N$ is effective with coefficients $\ge \delta$. The theorem does not hold if 
we drop the pseudo-effectivity condition of $E$: indeed, one can easily find counter-examples 
by considering $\epsilon$-lc Fano varieties $X$, $\Q$-divisors $0\le B\sim_\Q -K_X$ with coefficients $\ge \delta$ 
and then letting $E=K_X$ and $R=B+tB$ with $t$ arbitrarily small.

The theorem implies a more general form of \ref{cor-eff-bir-nefbig-weil}.

\begin{cor}\label{cor-eff-bir-nefbig-general}
Let $d$ be a natural number and $\epsilon,\delta$ be positive real numbers. 
Then there exist natural numbers $m,l$ depending only on $d,\epsilon, \delta$ satisfying the following. Assume that 
\begin{itemize}
\item $X$ is a projective $\epsilon$-lc variety of dimension $d$, 

\item $N$ is a nef and big $\R$-divisor on $X$, and

\item $N=E+R$ where $E$ is integral and pseudo-effective and $R\ge 0$ with coefficients in 
$\{0\}\cup [\delta,\infty)$.
\end{itemize}
\vspace{0.2cm}
Then $|m'K_X+l'N+L|$ defines a birational map for any natural numbers $m'\ge m$ and $l'\ge m'l$ and any
pseudo-effective integral divisor $L$.
 
\end{cor}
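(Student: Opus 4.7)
The plan is to deduce the corollary from Theorem \ref{t-eff-bir-general} applied to an auxiliary nef and big divisor that combines $K_X$ with $N$. Let $m_0=m_0(d,\epsilon,\delta)$ denote the constant supplied by Theorem \ref{t-eff-bir-general}; we set $m:=m_0$.

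The first step is to produce, uniformly in the data, a natural number $l_0=l_0(d,\epsilon,\delta)$ such that for every $(X,N)$ satisfying the hypotheses the divisor
\[
\tilde N := K_X + l_0 N \;=\; \tilde E + \tilde R, \qquad \tilde E := K_X + l_0 E, \quad \tilde R := l_0 R,
\]
is nef and big, with $\tilde E$ integral and pseudo-effective and the non-zero coefficients of $\tilde R$ at least $l_0\delta\ge\delta$. Since $\tilde N-K_X=l_0N$ is automatically pseudo-effective, $\tilde N$ then satisfies all the hypotheses of Theorem \ref{t-eff-bir-general} applied to $(X,\tilde N)$ with the same constants $d,\epsilon,\delta$. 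Obtaining such a uniform $l_0$ is the main obstacle: when $K_X$ is pseudo-effective the choice $l_0=1$ suffices, while when $K_X$ fails to be pseudo-effective one has to exploit the $\epsilon$-lc hypothesis---via length-of-extremal-ray bounds for $\epsilon$-lc pairs together with the integrality of $E$, or via BAB-type boundedness after a $K_X$-MMP---to get a uniform bound on how negative $K_X$ can be relative to $E$.

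With $l_0$ in hand, set $l:=l_0+1$. For any $m'\ge m$, $l'\ge m'l$, and integral pseudo-effective $L$, one rewrites
\[
m'K_X+l'N+L \;=\; K_X + m_0\tilde N + \tilde L,
\]
where $\tilde L := (m'-1-m_0)K_X+(l'-m_0l_0)N+L$. The bound $l'\ge m'(l_0+1)$ forces $l'-m_0l_0\ge m'\ge m_0$, so $(l'-m_0l_0)N$ is a large nef and big multiple of $N$; together with the pseudo-effectivity of $L$ this dominates the possibly negative $(m'-1-m_0)K_X$ and shows that $\tilde L$ is pseudo-effective. Integrality of $\tilde L$---a secondary technical point, since $N$ need not have rational coefficients---is handled by a slight modification of the decomposition $\tilde N=\tilde E+\tilde R$ which absorbs the fractional contribution of $l'R$ into $\tilde R$ while preserving both the coefficient lower bound $\ge\delta$ and the nef-bigness of $\tilde N$. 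Applying the second conclusion of Theorem \ref{t-eff-bir-general} to $(X,\tilde N,\tilde L)$ then yields that $|K_X+m_0\tilde N+\tilde L|=|m'K_X+l'N+L|$ defines a birational map, which is the conclusion sought.
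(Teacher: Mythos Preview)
The principal gap is the claim that $\tilde N=K_X+l_0N$ can be made \emph{nef} (and big) by choosing $l_0$ large enough. Bigness is indeed available uniformly---this is exactly Lemma \ref{l-peff-threshold}---but nefness can fail for every $l_0$. For instance, take $X$ to be the blow-up of $\PP^2$ at a point with exceptional curve $C$, and let $N$ be the pullback of a line; then $N\cdot C=0$ while $K_X\cdot C=-1$, so $(K_X+l_0N)\cdot C=-1$ for all $l_0$. Since Theorem \ref{t-eff-bir-general} requires its divisor to be nef and big, it cannot be applied to $\tilde N$ on $X$ as you propose.

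The paper's proof circumvents this by viewing $M:=K_X+(n+2r)N$ (for suitable bounded $n,r$) as the log divisor of a generalised pair, running an MMP on $M$ to reach a minimal model $X'$ on which $M'$ is nef and big, and only then invoking Theorem \ref{t-eff-bir-general} on $X'$. The required decomposition $M'=F'+T'$ (with $F'$ integral big and $T'\ge 0$ having non-zero coefficients $\ge 1$) is arranged in advance using floors and the structure $N=E+R$. Once $|mM|$ is known to define a birational map, the passage to $|m'K_X+l'N+L|$ is carried out via potentially birational divisors rather than by a second direct appeal to Theorem \ref{t-eff-bir-general}; this also sidesteps the integrality issues you flag for $\tilde L$.

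Two secondary points in your outline are also not settled: the pseudo-effectivity of $\tilde E=K_X+l_0E$ (note that $E$ is only pseudo-effective, not nef and big, so Lemma \ref{l-peff-threshold} does not apply to it directly), and the integrality of $\tilde L$ when $R$ has irrational coefficients. These are genuine, but they become moot once the nefness obstruction forces the change of strategy above.
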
\

\subsection{Birational or volume boundedness}
In this and the next subsection
we aim to derive a special case of Theorem \ref{t-eff-bir-general}, where we add the condition that $K_X+N$ is also big, 
from the theorem in lower dimension. Later in the section 
we will remove this condition using the BAB [\ref{B-BAB}, Theorem 1.1] 
in dimension $d$.
We begin with a birationality statement in which we either bound the birationality index or the volume of the 
relevant divisor.

\begin{prop}\label{p-eff-bir-nefbig-weil-1}
Let $d$ be a natural number and $\epsilon,\delta$ be positive real numbers. Assume that 
Theorem \ref{t-eff-bir-general} holds in dimension $\le d-1$.
Then there exists a natural number $v$ depending only on $d,\epsilon,\delta$ satisfying the following. Assume 
\begin{itemize}
\item $X$ is a projective $\epsilon$-lc variety of dimension $d$,

\item $N$ is a nef and big $\Q$-divisor on $X$,

\item $N-K_X$ and $N+K_X$ are big, and

\item $N=E+R$ where $E$ is integral and pseudo-effective and $R\ge 0$ with coefficients in 
$\{0\}\cup [\delta,\infty)$.
\end{itemize}
\vspace{0.2cm}
If $m$ is the smallest natural number such that $|mN|$ defines a birational map, then 
 either $m\le v$ or $\vol(mN)\le v$.
\end{prop}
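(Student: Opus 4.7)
I plan to argue the contrapositive: if $\vol(mN)>v$ for a sufficiently large $v=v(d,\epsilon,\delta)$, then I will produce a natural number $m^{\star}=m^{\star}(d,\epsilon,\delta)$ such that $|m^{\star}N|$ defines a birational map; minimality of $m$ then forces $m\le m^{\star}$. The mechanism is the standard Hacon-M$\rm ^c$Kernan-Xu non-klt centre technique combined with the adjunction and descent results of Section 3, adapted to the $E+R$ decomposition.

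First, choose a rational number $\lambda\in(0,1)$ with $\vol(\lambda mN)>(2d)^d$; for $v$ large this can be arranged with $\lambda m$ bounded. By \ref{ss-non-klt-centres}(2) applied to $D=\lambda mN$, for every general pair of closed points $x,y\in X$ there is a subvariety $G\ni x$ from a bounded covering family of subvarieties of $X$, with normalisation $F$, and a $\Q$-divisor $0\le\Delta\sim_\Q\lambda mN$ such that $(X,\Delta)$ is lc near $x$ with a unique non-klt place centred on $G$ and is not klt at $y$. Suppose first that $\dim G>0$. The adjunction formula of \ref{constr-adjunction-non-klt-centre} gives
$$
K_F+\Theta_F+P_F\sim_\R(K_X+\Delta)|_F.
$$
I will apply Proposition \ref{p-bnd-sing-on-non-klt-centre} (with $C=0$, $M$ a multiple of $mN$ whose linear system is birational by the very choice of $m$, and suitable $S$) to conclude that $F$ is $\epsilon'$-lc for some $\epsilon'=\epsilon'(d,\epsilon,\delta)>0$, and then apply Proposition \ref{p-non-klt-centres-restriction-weil-div} separately to $E$ (with $\Phi=\Z$) and to $R$ (with $\Phi=[\delta,\infty)$, both closed under addition) to obtain a decomposition $qN|_F=qE|_F+qR|_F$ with $qE|_F$ integral pseudo-effective and all nonzero coefficients of $qR|_F$ at least $\delta$, for some $q=q(d,\epsilon)$.

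Having verified that $qN|_F$ is nef and big (Lemma \ref{l-restriction-big-divisors}, for general $G$) and that $qN|_F-K_F$ is pseudo-effective (from the adjunction and $N-K_X$ pseudo-effective, using that $\lambda m$ is bounded), all the hypotheses of Theorem \ref{t-eff-bir-general} in dimension $\dim F<d$ are in place. The induction hypothesis then furnishes $m_F=m_F(d,\epsilon,\delta)\in\N$ so that $|K_F+m_FqN|_F+L_F|$ is birational for every integral pseudo-effective $L_F$ on $F$. Standard Kawamata-Viehweg vanishing applied to $(X,\Delta)$ --- whose $G$ is an isolated non-klt centre thanks to the unique-place condition --- lifts this to birationality of a system of the form $|K_X+\lceil\Delta\rceil+m_FqN|$ on $X$. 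Using bigness of $K_X+N$ to absorb the $K_X$ term into a bounded multiple of $N$, and effectivity of $\Delta\sim_\Q\lambda mN$ (with $\lambda m$ bounded) to control $\lceil\Delta\rceil$, one obtains birationality of $|m^{\star}N|$ for $m^{\star}$ depending only on $d,\epsilon,\delta$. The case $\dim G=0$ (so $G=\{x\}$ is an isolated non-klt point) is handled directly by Kawamata-Viehweg without invoking induction, followed by the same absorption of $K_X$ using $K_X+N$ big.

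The main obstacle is the calibration of the constants: choosing $v$ large enough that $\lambda$ can be taken small and $\lambda m$ kept bounded (so that $\Delta$ has controlled coefficients and $qN|_F-K_F$ remains pseudo-effective for $q$ depending only on $d,\epsilon$), while simultaneously ensuring $\vol(\lambda mN)>(2d)^d$. The extension step and the final absorption of $K_X$ both rely essentially on the bigness of $K_X\pm N$, which is precisely why this extra hypothesis on $N+K_X$ appears here but not in Theorem \ref{t-eff-bir-general}. A secondary technical point is the careful verification of all the hypotheses of Proposition \ref{p-bnd-sing-on-non-klt-centre}, particularly the coefficient conditions on $C+S$ and on $M+C+S$, and the requirement that $(F,\Theta_F+P_F)$ be $\epsilon'$-lc for \emph{every} choice of $P_F\ge 0$ in its $\R$-linear equivalence class; this is where the structural control afforded by the adjunction construction pays off.
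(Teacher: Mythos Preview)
Your calibration claim in the first paragraph is the heart of the problem, and it fails. You assert that from $\vol(mN)>v$ one can choose $\lambda\in(0,1)$ with $\vol(\lambda mN)>(2d)^d$ and $\lambda m$ bounded. But $\vol(\lambda mN)=(\lambda m)^d\vol(N)$, so the condition forces $\lambda m>2d/\vol(N)^{1/d}$, and all you know about $\vol(N)$ is $\vol(N)>v/m^d$; this only gives $\lambda m>2dm/v^{1/d}$, which is not bounded unless $m$ is --- exactly what you are trying to prove. The whole scheme of producing a single bounded $m^{\star}$ by this route therefore collapses at the outset. The paper avoids this by not trying to bound $m$ directly: it passes to a sequence $(X_i,N_i)$ with $m_i\to\infty$ and $\vol(m_iN_i)\to\infty$, and shows instead that the \emph{ratio} $m_i/n_i$ is bounded (where $n_i$ is the least integer with $\vol(n_iN_i)>(2d)^d$); boundedness of this ratio is exactly what forces one of $m_i,\vol(m_iN_i)$ to be bounded.

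Your second main step --- ``lift birationality from $F$ to $X$ via Kawamata--Viehweg vanishing'' --- is also not how the argument goes and is not made precise in your sketch. The paper never lifts sections from $F$. Instead, once $(F,\Theta_F+P_F)$ is shown to be $\epsilon'$-lc (via Proposition~\ref{p-bnd-sing-on-non-klt-centre}) and $qN|_F$ has the required $E+R$ structure (via Proposition~\ref{p-non-klt-centres-restriction-weil-div}), the induction hypothesis gives a \emph{volume lower bound} $\vol(pqn_iN_i|_{F_i})\ge 1$. This is fed back into the cutting-down-centres mechanism \ref{ss-non-klt-centres}(3): the paper introduces an auxiliary threshold $l_i$ (least integer with $\vol(l_iN_i|_{G_i})>d^d$ for all positive-dimensional $G_i$), shows one may assume $l_i/n_i\to\infty$ and $m_i/l_i$ bounded, and then the volume bound from induction forces $l_i\le 2dpqn_i$, a contradiction. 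This ratio/threshold bookkeeping is the real engine of the proof, and your proposal omits it entirely.
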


\begin{proof}
We follow the proof of [\ref{B-compl}, Proposition 4.8].

\emph{Step 1}.
In this step we setup basic notation and make some reductions. 
If the proposition does not hold, then there is a sequence $X_i,N_i$ of varieties 
and divisors as in the proposition such that if $m_i$ is the smallest natural number 
so that $|m_iN_i|$ defines a birational map, then both the numbers $m_i$ and 
the volumes $\vol(m_iN_i)$ form increasing sequences approaching $\infty$. 
Let $n_i\in\N$ be a natural number so that $\vol(n_iN_i)>(2d)^d$. 
Taking a $\Q$-factorialisation we can assume $X_i$ are $\Q$-factorial. 
 
Assume that $\frac{m_i}{n_i}$ is always (i.e. for any choice 
of $n_i$ as above) bounded from above. 
Then letting $n_i\in\N$ be the smallest number so that $\vol(n_iN_i)>(2d)^d$, 
 either $n_i=1$ which immediately implies $m_i$ is bounded from above, or $n_i>1$ in which case 
we have 
$$
\vol(m_iN_i)=(\frac{m_i}{n_i-1})^d \vol((n_i-1)N_i)\le (\frac{m_i}{n_i-1})^d(2d)^d
$$
so $\vol(m_iN_i)$ is bounded from above. 

Therefore, it is enough to show that $\frac{m_i}{n_i}$ is always bounded from above 
where $n_i$ is arbitrary as in the first paragraph. Assume otherwise. 
We can then assume that the numbers $\frac{m_i}{n_i}$ form an increasing sequence 
approaching $\infty$. We will derive a contradiction.\\

\emph{Step 2.}
In this step we fix $i$ and create a covering family of non-klt centres on $X_i$. 
Replacing $n_i$ with $n_i+1$ we can assume $\vol((n_i-1)N_i)>(2d)^d$. Thus 
applying \ref{ss-non-klt-centres}(2),   
there is a  covering family of subvarieties of $X_i$ 
such that for any two general closed points $x_i,y_i\in X_i$ we can choose a  member  
 $G_i$ of the family and choose a $\Q$-divisor 
$$
0\le \tilde{\Delta}_i\sim_\Q (n_i-1)N_i
$$  
so that $(X_i,\tilde{\Delta}_i)$ is lc near $x_i$ with a unique 
non-klt place whose centre contains $x_i$, that centre is $G_i$, and $(X_i,\tilde{\Delta}_i)$ is not klt near $y_i$.

On the other hand, by assumption $N_i-K_{X_i}$ is big. Fix some 
$$
0\le Q_i\sim_\Q N_i-K_{X_i}, 
$$
independent of the choice of $x_i,y_i$; we can assume $x_i,y_i$ are not contained in $\Supp Q_i$ as 
$x_i,y_i$ are general. 
Then  we get 
$$
0\le \Delta_i:=\tilde{\Delta}_i+Q_i\sim_\Q n_iN_i-K_{X_i} 
$$
such that $(X_i,\Delta_i)$ is lc near $x_i$ with a unique 
non-klt place whose centre contains $x_i$, that centre is $G_i$, and $(X_i,{\Delta}_i)$ is not klt near $y_i$.

Since $x_i,y_i$ are general, we can assume $G_i$ is a general member of 
the above covering family of subvarieties. 
Recall from \ref{ss-cov-fam-subvar} that this means that the family is given by finitely many 
morphisms $V^j\to T^j$ of projective varieties with accompanying surjective morphisms $V^j\to X$ 
and that each $G_i$ is a general fibre of one of the morphisms $V^j\to T^j$. Moreover, we can assume 
the points of $T^j$ corresponding to such $G_i$ are dense in $T^j$. 

Let 
$$
d_i:=\max_j\{\dim V^j-\dim T^j\}.
$$
Assume $d_i=0$, that is, $\dim G_i=0$ for all the $G_i$. Then $n_iN_i-K_{X_i}$ is 
potentially birational. Let $r\in \N$ be the smallest number such that $r\delta\ge 1$. 
By assumption, $N_i=E_i+R_i$ with $E_i$ integral and pseudo-effective and $R_i\ge 0$ whose non-zero 
coefficients are $\ge \delta$. Thus the fractional part of $(n_i+r)N_i-K_{X_i}$, say $R_i'$, 
is supported in $R_i$ and the coefficients of $rR_i$ are $\ge 1$. So 
$$
\rddown{(n_i+r)N_i-K_{X_i}}=(n_i+r)N_i-K_{X_i}-R_i'={n_i}N_i-K_{X_i}+rE_i+rR_i-R_i'
$$ 
where $rR_i-R_i'\ge 0$, hence $\rddown{(n_i+r)N_i-K_{X_i}}$ is potentially birational.  
Therefore,  
$$
|\rddown{(n_i+r)N_i}|=|K_{X_i}+\rddown{(n_i+r)N_i-K_{X_i}}|
$$ 
defines a birational map by [\ref{HMX1}, Lemma 2.3.4] which in turn implies $|(n_i+r)N_i|$ 
defines a birational map; this means $m_i\le n_i+r$ giving a contradiction as 
we can assume $m_i/n_i\gg 0$. Thus we can assume $d_i>0$, hence $\dim G_i>0$ for all the $G_i$  
appearing as general fibres of $V^j\to T^j$ for some $j$.\\
 
\emph{Step 3.} 
In this step we find a sub-family of the $G_i$ so that $\vol(m_iN_i|_{G_i})$ is bounded from above, 
independent of $i$. For each $i$ let $l_i\in\N$ be the smallest number so that $\vol(l_iN_i|_{G_i})>d^d$ for all 
the $G_i$ with positive dimension. 
Assume $\frac{l_i}{n_i}$ is bounded from above by some natural number $a$. Then for each $i$ and 
each positive dimensional $G_i$, we have 
$$
{d^d}< \vol(l_iN_i|_{G_i}) \le \vol(an_iN_i|_{G_i}). 
$$
Thus applying Lemma \ref{l-non-klt-centres-ii} and replacing $n_i$ with $3an_i$  
we can modify $\Delta_i,G_i$, and possibly switch $x_i,y_i$, so that we decrease the number $d_i$.  
Repeating the process we either get to the situation in which $d_i=0$ which yields a contradiction 
as in Step 2, or we can assume $\frac{l_i}{n_i}$ is an 
increasing sequence approaching $\infty$. 

On the other hand, if $\frac{m_i}{l_i}$ is not bounded from above, then we can assume 
$\frac{m_i}{l_i}$ is an increasing sequence approaching $\infty$, hence 
we can replace $n_i$ with $l_i$ (by adding appropriately to $\Delta_i$) in which case $\frac{l_i}{n_i}$ is bounded so we 
can argue as in the previous paragraph by decreasing $d_i$. So we can assume $\frac{m_i}{l_i}$ is bounded from above.

Now for each $i$, there is $j$ so that if $G_i$ is a general fibre of 
$V^j\to T^j$, then $G_i$ is positive dimensional and 
$$
\vol((l_i-1)N_i|_{G_i})\le d^d,
$$ 
 by definition of $l_i$.
In order to get a contradiction in the following steps it suffices to consider only such $G_i$.
\emph{From now on when we mention $G_i$ we assume it is positive dimensional and that it 
satisfies the  inequality just stated}.
  In particular,  
$$
\vol(m_iN_i|_{G_i})=\left(\frac{m_i}{l_i-1}\right)^{\dim G_i}\vol((l_i-1)N_i|_{G_i})\le \left(\frac{m_i}{l_i-1}\right)^dd^d
$$ 
is bounded from above, so $\vol(m_iN_i|_{G_i})<u$ for some natural number $u$ independent of $i$.\\

\emph{Step 4.} 
Let $F_i$ be the normalisation of $G_i$. In this step we fix $i$ and apply adjunction by restricting to $F_i$.
Since $G_i$ is a general member of a covering family, $X_i$ is smooth  near the generic point 
of $G_i$. By the adjunction of \ref{constr-adjunction-non-klt-centre} (taking $B=0$ and $\Delta=\Delta_i$), we can write 
$$
K_{F_i}+\Delta_{F_i}:=K_{F_i}+\Theta_{F_i}+P_{F_i}\sim_\R (K_{X_i}+\Delta_i)|_{F_i}\sim_\R n_iN_i|_{F_i}
$$
where $\Theta_{F_i}\ge 0$ with coefficients in some fixed DCC set $\Psi$ independent of $i$, 
and $P_{F_i}$ is pseudo-effective. 
Since $x_i$ is a general point, 
we can pick  $0\le \tilde{N}_i\sim_\Q N_i$ not containing $x_i$. By definition of $\Theta_{F_i}$, 
adding $\tilde{N}_i$ to $\Delta_i$ does not change $\Theta_{F_i}$ but changes $P_{F_i}$ to 
$P_{F_i}+\tilde{N}_i|_{F_i}$. Thus replacing 
$n_i$ with $n_i+1$ and changing $P_{F_i}$ up to $\R$-linear equivalence we can assume 
$P_{F_i}$ is effective and big (alternatively, using the notation of Step 2, we can first apply the adjunction to $(X_i,\tilde{\Delta}_i)$ and then add ${Q}_i|_{F_i}$ to deduce $P_{F_i}$ is big).

On the other hand, by [\ref{HMX2}, Theorem 4.2] and [\ref{B-compl}, Lemma 3.12],
 we can write 
$$
K_{F_i}+\Lambda_{F_i}=K_{X_i}|_{F_i}
$$ 
where $(F_i,\Lambda_{F_i})$ is sub-$\epsilon$-lc and $\Lambda_{F_i}\le \Theta_{F_i}$.\\ 

\emph{Step 5.}
In this step we reduce to the situation in which $(F_i,\Delta_{F_i})$ is ${\epsilon}'$-lc 
for every $i$ where $\epsilon'=\frac{\epsilon}{2}$. 
 Pick $0\le M_i\sim m_iN_i$, independent of $x_i,y_i$. Since $G_i$ is general, we can assume 
 $\Supp M_i$ does not contain $G_i$. Let $M_{F_i}:=M_i|_{F_i}$.
Let $t$ be the number given by Proposition \ref{p-bnd-sing-on-non-klt-centre} 
for the data $d,u,\epsilon,\epsilon'$. Let $r\in \N$ be the smallest number such that $r\ge \frac{1}{\delta}$.
We can assume that $\frac{n_i+1+r}{m_i}<t$ for every $i$. 
Let $C_i=0$ and let $S_i=rR_i$. We want to apply \ref{p-bnd-sing-on-non-klt-centre} to 
$$
X_i,C_i,M_i,S_i,\Delta_i,G_i,F_i,\Theta_{F_i},P_{F_i}
$$ 
where $P_{F_i}$ can be any effective divisor in its $\R$-linear equivalence class. 
 
Conditions (1)-(4) of \ref{p-bnd-sing-on-non-klt-centre} are obviously satisfied. Condition (5) is satisfied because 
the non-zero coefficients of $S_i$ are $\ge 1$ and similarly the non-zero coefficients of $M_i+S_i$ are 
$\ge 1$ as the fractional part of $M_i$ is supported in $\Supp R_i=\Supp S_i$; the latter claim follows from  
$$
M_i\sim m_iN_i=m_iE_i+m_iR_i
$$
 and the assumption that $E_i$ is integral. Conditions (6)-(9) are satisfied by construction, and condition 
(10) is ensured by the end of Step 3. Condition (11) follows from $K_{X_i}+\Delta_i\sim_\Q n_iN_i$, and 
(12) follows from bigness of 
$$
M_i-(K_{X_i}+S_i+\Delta_i)\sim_\Q M_i-(K_{X_i}+\Delta_i)-S_i
$$
$$
\sim_\Q m_iN_i-n_iN_i-S_i=(m_i-n_i)N_i-rR_i=(m_i-n_i-r)N_i+rE_i
$$
as we can assume $m_i> n_i+r$. Finally, condition (13) is satisfied because 
from $\frac{n_i+1+r}{m_i}<t$, we have 
$$
t m_i-n_i-1-r>0,
$$ 
hence  
$$
t M_i-\Delta_i-S_i\sim_\R t m_iN_i-n_iN_i+K_{X_i}-S_i
$$
$$
= 
(t m_i-n_i-1-r)N_i+N_i+K_{X_i}+rN_i-S_i
$$
is big as $N_i+K_{X_i}$ is big by assumption and $rN_i-S_i=rE_i$ is pseudo-effective. 
 
Now by Proposition \ref{p-bnd-sing-on-non-klt-centre}, 
 $(F_i,\Delta_{F_i}+t M_{F_i})$ is $\epsilon'$-lc for every $i$ which in particular means 
  $(F_i,\Delta_{F_i})$ is $\epsilon'$-lc.\\ 

\emph{Step 6.}
In this step we finish the proof. Let $E_{F_i}=E_i|_{F_i}$, $R_{F_i}=R_i|_{F_i}$, and 
$N_{F_i}=N_i|_{F_i}$ which are all well-defined as $\Q$-Weil 
divisors as we can assume $G_i$ is not contained in $\Supp E_i\cup \Supp R_i$.
Applying Proposition \ref{p-non-klt-centres-restriction-weil-div}, by taking $\Phi=\Z$, 
there is a natural number $q$ depending only on $\epsilon'$ such that $qE_{F_i}$ is an integral divisor. 
Applying the proposition again, this time taking $\Phi=\R^{\ge \delta}$, and replacing $q$ we can assume that 
the non-zero coefficients of $qR_{F_i}$ are $\ge \delta$. Thus $qN_{F_i}=qE_{F_i}+qR_{F_i}$ is the 
sum of a pseudo-effective integral divisor and an effective $\Q$-divisor whose non-zero coefficients are $\ge \delta$.  
  
On the other hand, by construction, 
$$
qn_iN_{F_i}-K_{F_i}=(q-1)n_iN_{F_i}+n_iN_{F_i}-K_{F_i}\sim_\R (q-1)n_iN_{F_i}+\Delta_{F_i}
$$
is big. Thus since we are assuming Theorem \ref{t-eff-bir-general} in dimension $\le d-1$, 
we deduce that there is a natural number $p$ depending only on $d,\epsilon',\delta$ 
such that $|pqn_iN_{F_i}|$ defines a birational map (note that $K_{F_i}$ may not be 
$\Q$-Cartier but we can apply the theorem to a small $\Q$-factorialisation of $F_i$). Then  
$\vol(pqn_iN_{F_i})\ge 1$, hence 
$$
\vol((2d)^dpqn_iN_i|_{G_i})=\vol((2d)^dpqn_iN_{F_i})\ge (2d)^d.
$$ 
But then by Step 3, we have 
$l_i-1< (2d)^dpqn_i$ which is a contradiction since we assumed 
$\frac{l_i}{n_i}$ is an unbounded sequence.

\end{proof}

\subsection{Birational boundedness}
In this subsection we strengthen \ref{p-eff-bir-nefbig-weil-1} by showing that we can actually 
take $m$ itself to be bounded.

\begin{lem}\label{l-bnd-lc-model-ep-lc}
Let $d,v$ be natural numbers, $\epsilon$ be a positive real number, and $\Phi\subset [0,1]$ be a finite set of 
rational numbers. Assume Theorem \ref{t-eff-bir-general} holds in dimension $\le d-1$.
Assume that 
\begin{itemize}
\item $(X,B)$ is a projective $\epsilon$-lc pair of dimension $d$,

\item the coefficients of $B$ are in $\Phi$,

\item $K_X+B$ is ample with $\vol(K_X+B)\le v$, and 

\item $2K_X+B$ is big.
\end{itemize}
Then such $(X,\Supp B)$ form a bounded family.
\end{lem}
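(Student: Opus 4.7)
The plan is to combine Proposition \ref{p-eff-bir-nefbig-weil-1} (available via the inductive hypothesis) with the boundedness theorem for stable pairs of Hacon--M${}^{\rm c}$Kernan--Xu.

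First, fix $l \ge 2$ with $l\Phi \subset \Z$ (possible because $\Phi$ is a finite set of rationals) and set $N := l(K_X+B)$, a nef and big integer divisor. Since $K_X+B$ is ample and $l \ge 2$,
\[
N - K_X = (l-1)(K_X+B) + B
\]
is big; and since $2K_X+B$ is big by hypothesis,
\[
N + K_X = (l-1)(K_X+B) + (2K_X+B)
\]
is big. Writing trivially $N = E+R$ with $E := N$ integral and pseudo-effective and $R := 0$, Proposition \ref{p-eff-bir-nefbig-weil-1} produces a constant $v_0 = v_0(d,\epsilon)$ such that the smallest $m_0$ with $|m_0 N|$ defining a birational map satisfies either $m_0 \le v_0$ or $\vol(m_0 N) \le v_0$. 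Since $\vol(N) = l^d\vol(K_X+B) \le l^d v$, in both cases we obtain an integer divisor $H := m_0 N$ on $X$ with $|H|$ defining a birational map and $\vol(H)$ bounded in terms of $d,\epsilon,\Phi,v$.

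Next, I would extract log birational boundedness of $(X,\Supp B)$ from this data by a standard Chow-variety/generic projection argument: the image $\phi_{|H|}(X)$ lies in a bounded family of subvarieties of projective space of bounded degree, so $X$ is birationally bounded; and since the intersection number $B \cdot H^{d-1}$ is controlled by $\vol(K_X+B) \le v$, the pushforward of $B$ under $\phi_{|H|}$ has bounded degree, so the couple $(X,\Supp B)$ is log birationally bounded.

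Finally, since $(X,B)$ is lc with coefficients in the finite (hence DCC) set $\Phi$, $K_X+B$ is ample, and $\vol(K_X+B)\le v$, the boundedness theorem for lc stable pairs [\ref{HMX3}] upgrades this log birational boundedness to genuine log boundedness of $(X,\Supp B)$. The hardest part is this last upgrade: the $\epsilon$-lc hypothesis together with the finiteness of $\Phi$ is precisely what prevents unbounded degeneration within the log birationally bounded family produced in the previous step. Alternatively, one could carry out the upgrade by hand, pulling $(X,B)$ back to a common log resolution with the bounded ambient family from the previous step and using discrepancy bounds from $\epsilon$-lc together with ACC-type results for log canonical thresholds to control the induced log canonical contraction.
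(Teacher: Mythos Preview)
Your approach is essentially the same as the paper's: choose $l$ clearing denominators so that $N=l(K_X+B)$ is integral, check $N\pm K_X$ are big, apply Proposition~\ref{p-eff-bir-nefbig-weil-1} (on a $\Q$-factorialisation) to bound $\vol(mN)$ with $|mN|$ birational, deduce log birational boundedness, then upgrade to log boundedness. Two small points of imprecision: for the log birational boundedness step the paper invokes [\ref{B-compl}, Proposition 4.4] rather than an ad hoc Chow-variety argument (this packages the degree bounds you sketch); and for the final upgrade the correct citation is [\ref{HMX2}, Theorem 1.6] (a consequence of [\ref{HMX1}, Theorem 1.8]), not [\ref{HMX3}], which concerns stable pairs of \emph{fixed} volume and would not directly apply to the range $\vol(K_X+B)\le v$.
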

\begin{proof}
There is a natural number $l>1$ depending only on $\Phi$ such that $N:=l(K_X+B)$ is integral. 
Moreover, 
$$
N-K_X=(l-1)(K_X+B)+B
$$ 
and 
$$
N+K_X=(l-1)(K_X+B)+2K_X+B
$$ 
are both big. Now applying Proposition \ref{p-eff-bir-nefbig-weil-1} to a $\Q$-factorialisation of $X$ 
we deduce that there is a natural number $v'$ depending only on $d,\epsilon$ such that if 
$m\in \N$ is the smallest number such that $|mN|$ defines a birational map, then either $m\le v'$ or $\vol(mN)\le v'$. 
If $m\le v'$, then 
$$
\vol(mN)=\vol(ml(K_X+B))\le (ml)^dv\le (v'l)^dv
$$ 
So in any case $\vol(mN)$ is bounded from above. 

Pick $0\le M\sim mN$. Then, by [\ref{B-compl}, Proposition 4.4], 
$(X,\Supp (B+M))$ is birationally bounded. Finally apply [\ref{HMX2}, Theorem 1.6].

\end{proof}

\begin{prop}\label{p-eff-bir-nefbig-weil-2}
Let $d$ be a natural number and $\epsilon,\delta$ be positive real numbers. Assume that 
Theorem \ref{t-eff-bir-general} holds in dimension $\le d-1$.
Then there exists a natural number $m$ depending only on $d,\epsilon,\delta$ satisfying the following. Assume 
\begin{itemize}
\item $X$ is a projective $\epsilon$-lc variety of dimension $d$,

\item $N$ is a nef and big $\Q$-divisor on $X$,

\item $N-K_X$ and $N+K_X$ are big, and

\item $N=E+R$ where $E$ is integral and pseudo-effective and $R\ge 0$ with coefficients in 
$\{0\}\cup [\delta,\infty)$.
\end{itemize}
\vspace{0.2cm}
Then  $|mN|$ defines a birational map.
\end{prop}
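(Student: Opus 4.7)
The plan is to argue by contradiction, bootstrapping from Proposition \ref{p-eff-bir-nefbig-weil-1} via Lemma \ref{l-bnd-lc-model-ep-lc} together with a volume lower bound on bounded families. Suppose no such $m$ exists. Then there is a sequence $(X_i,N_i)$ of counter-examples satisfying the hypotheses of the proposition, with minimal birationality indices $m_i\to\infty$. Applying Proposition \ref{p-eff-bir-nefbig-weil-1} to each $(X_i,N_i)$, after passing to a subsequence we may assume $\vol(m_iN_i)\le v$ for a uniform constant $v$ (the alternative $m_i\le v$ would already contradict $m_i\to\infty$); in particular $|m_iN_i|$ defines a birational map and $\vol(N_i)\le v/m_i^d\to 0$.

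Pick a general $M_i\in|m_iN_i|$. By [\ref{B-compl}, Proposition~4.4] applied to the $\epsilon$-lc variety $X_i$ and $M_i$ (using $|M_i|$ birational and $\vol(M_i)\le v$), the couple $(X_i,\Supp M_i)$ is log birationally bounded, with a uniform constant $c$ bounding the coefficients of the pushforward of $M_i$ on the bounded model.

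Next I aim to invoke Lemma \ref{l-bnd-lc-model-ep-lc}. Using the log birational boundedness and generality of $M_i$, there is a uniform rational $\tau=\tau(d,\epsilon)\in(0,1]$ such that $(X_i,\tau\,\Supp M_i)$ is $\epsilon/2$-lc, since the relevant log resolution sits in a bounded family. Set $B_i:=\tau\,\Supp M_i$, whose coefficients lie in the finite set $\{0,\tau\}$. Using the hypothesis that $K_{X_i}+N_i$ is big, together with the fact that $\Supp M_i$ dominates a positive rational multiple of $N_i$ (controlled uniformly via $c$), one checks that both $K_{X_i}+B_i$ and $2K_{X_i}+B_i$ are big once $m_i$ is large. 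Running an MMP on $K_{X_i}+B_i$ (terminating by [\ref{BCHM}] since the pair is klt with $K+B$ big) yields the log canonical model $X_i^c$ with $K_{X_i^c}+B_i^c$ ample, and $(X_i^c,B_i^c)$ still $\epsilon/2$-lc. The volume $\vol(K_{X_i^c}+B_i^c)=\vol(K_{X_i}+B_i)$ is uniformly bounded above via the log birational boundedness, by comparing volumes on a common resolution with the bounded model (exceptional contributions absorbed by the $\epsilon$-lc property). Lemma \ref{l-bnd-lc-model-ep-lc} then shows that $(X_i^c,\Supp B_i^c)$ belongs to a bounded family.

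Finally I derive the contradiction. Since the $X_i^c$ lie in a bounded family of projective $\epsilon/2$-lc varieties, their Picard groups and the Cartier indices of Weil divisors are uniformly bounded; via birational equivalence, the $X_i$ are birationally bounded with the same control. It follows that nef and big integral Weil divisors on the $X_i$ have a uniform positive lower bound on their volume, since after a uniform bounded scaling they represent Cartier classes inside a bounded set of numerical classes on a bounded family. This contradicts $\vol(N_i)\le v/m_i^d\to 0$.

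The main obstacle is verifying the hypotheses of Lemma \ref{l-bnd-lc-model-ep-lc} for $(X_i^c,B_i^c)$, especially the uniform upper bound on $\vol(K_{X_i^c}+B_i^c)$, which requires a careful volume comparison through a common resolution with the log birationally bounded model, together with tracking how $\Supp M_i$ dominates a uniform multiple of $N_i$. A secondary obstacle is the final step of converting birational boundedness of the $X_i^c$ into a genuine uniform lower bound for $\vol(N_i)$ on the original $X_i$; this depends on standard properties of bounded families of $\epsilon$-lc varieties.
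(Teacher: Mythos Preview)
Your overall architecture matches the paper's proof: reduce via Proposition~\ref{p-eff-bir-nefbig-weil-1} to the case $\vol(mN)\le v$ with $m$ large, establish log birational boundedness of $(X,\Supp M)$ for $M\in|mN|$, pass to a bounded lc model via Lemma~\ref{l-bnd-lc-model-ep-lc}, and derive a contradiction. However, two of your steps are genuine gaps rather than mere obstacles, and the paper handles them differently.

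First, the assertion that log birational boundedness alone yields a uniform $\tau$ with $(X_i,\tau\,\Supp M_i)$ being $\epsilon/2$-lc is not justified by ``the relevant log resolution sits in a bounded family''. Log birational boundedness controls the model $(\overline{X},\overline{\Sigma})$, not singularities on $X_i$ itself; the birational map $\overline{X}\dashrightarrow X_i$ may contract divisors. The paper instead works with $tM$ (not $\tau\,\Supp M$) and uses that $K_X+\Delta+tM\sim_\Q N+tM$ is \emph{nef}, together with the negativity lemma, to transfer the sub-$\epsilon/2$-lc property from $\overline{X}$ back to $X$. Your approach with $\Supp M_i$ loses this nefness and hence the transfer mechanism. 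Relatedly, your claim that $\Supp M_i$ dominates a uniform positive multiple of $N_i$ requires bounding the coefficients of $M_i$ on $X_i$ (not on $\overline{X}$), which is precisely what the $\epsilon/2$-lc property of $(X_i,tM_i)$ would give---so this is circular as written.

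Second, your endgame is shakier than necessary. The $X_i^c$ are bounded, but you have not shown the $X_i$ themselves are bounded, and $N_i$ is not an integral divisor, so the volume lower bound you invoke does not apply directly. The paper avoids this entirely: on the lc model $Y$ it takes a very ample $A_Y$ with $A_Y^{d-1}\cdot\rddown{M}_Y$ bounded above, then uses $\rddown{M}_Y\sim_\Q mN_Y-P_Y$ together with the elementary bound $A_Y^{d-1}\cdot N_Y\ge\min\{1,\delta\}$ (coming from $N_Y=E_Y+R_Y$) to force $m$ bounded---no return to $X_i$ is needed.
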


\begin{proof}
\emph{Step 1.}
In this step we apply \ref{p-eff-bir-nefbig-weil-1} and introduce some notation. 
Taking a small $\Q$-factorialisation, we will assume $X$ is $\Q$-factorial.
Let $v\in \N$ be the number given by Proposition \ref{p-eff-bir-nefbig-weil-1} for the data 
$d,\epsilon,\delta$. Then there is $m\in \N$ such that $|mN|$ 
defines a birational map and either $m\le v$ or $\vol(mN)\le v$. 
In the former case we are done by replacing $m$ with $v!$, hence we can assume the latter holds 
and that $m$ is sufficiently large. 
 
Since $N-K_X$ is big, we can find 
$$
0\le \Delta\sim_\Q N-K_X.
$$
Pick $0\le M\sim mN$.\\

\emph{Step 2.}
In this step we show that $(X,\Supp (M+R))$ is birationally bounded. 
Let $B=\frac{1}{\delta}R$. Then 
$$
M-(K_X+B)\sim mN-K_X-\frac{1}{\delta}R=(m-\frac{1}{\delta}-1)N+N-K_X+\frac{1}{\delta}N-\frac{1}{\delta}R
$$
is big because $m-\frac{1}{\delta}-1\ge 0$, $N-K_X$ is big, and 
$\frac{1}{\delta}N-\frac{1}{\delta}R=\frac{1}{\delta}E$ is pseudo-effective. 
On the other hand, since $M\sim mE+mR$, 
the fractional part of $M$ is supported in $B$, so for any component $D$ of $M$ 
we have $\mu_D(B+M)\ge 1$. Therefore, by [\ref{B-compl}, Proposition 4.4],  
there exist a bounded set of couples $\mathcal{P}$ and a natural number $c$ 
depending only on $d,v$ such that 
we can find a projective log smooth couple
 $({\overline{X}},\overline{{\Sigma}})\in\mathcal{P}$ and a birational map $\overline{X}\bir X$ such that 
\begin{itemize}
\item  $\Supp \overline{{\Sigma}}$ contains the exceptional 
divisors of  $\overline{X}\bir X$ and the birational transform of $\Supp (B+M)$;

\item if $\phi\colon X'\to X$ and $\psi\colon X'\to \overline{X}$ is a common resolution and 
$\overline{M}$ is the pushdown of $M':=M|_{X'}$, then each coefficient of $\overline{M}$ 
is at most $c$;

\item we can choose $X'$ so that $M'\sim A'+R'$ where $A'$ is big, $|A'|$ is base point free, $R'\ge 0$, 
and $A'\sim 0/\overline{X}$.\\
\end{itemize}

\emph{Step 3.}
In this step we want to show that
there is a positive rational number $t$ depending only on $\mathcal{P},c,\epsilon$ such that we can assume 
$(X,tM)$ is $\frac{\epsilon}{2}$-lc. We do this by finding 
$t$ so that $(X,\Delta+tM)$ is $\frac{\epsilon}{2}$-lc. 
For now let $t$ be a positive rational number.
Then 
$$
K_X+\Delta+tM\sim_\Q N+tM \sim_\Q (1+tm)N
$$ 
is nef and big. 
Let $K_{\overline{X}}+\overline{\Lambda}=\psi_*\phi^*K_X$ be the crepant 
pullback of $K_X$ to $\overline{X}$. Since $X$ has $\epsilon$-lc singularities, the coefficients 
of $\overline{\Lambda}$ are at most $1-\epsilon$. Moreover, $\Supp \overline{\Lambda}\subseteq \overline{\Sigma}$. 
On the other hand, since $N+K_X$ is assumed big, we can find 
$$
0\le C\sim_\Q N+K_X.
$$
Note that $\Delta+C\sim_\Q 2N$.
Then 
$$
(\frac{2}{m}+t)M\sim_\Q 2N+tM\sim_\Q \Delta+C+tM.
$$ 

Now if $\overline{\Delta}=\psi_*\phi^*\Delta$ and $\overline{C}=\psi_*\phi^*C$, 
then  
$$
\psi_*\phi^*2(\Delta+C+tM)=2(\overline{\Delta}+\overline{C}+t\overline{M}) \sim_\Q (\frac{4}{m}+2t)\overline{M}.
$$
By Step 2, $\Supp \overline{M}\subseteq \overline{\Sigma}$ and  
 $(\frac{4}{m}+2t)\overline{M}$ has coefficients $\le (\frac{4}{m}+2t)c$. Therefore, 
applying [\ref{B-compl}, Proposition 4.2], by taking   
$m$ sufficiently large and taking a fixed $t$ sufficiently small, depending only on $\mathcal{P},c,\epsilon$, 
we can assume that 
$$
(\overline{X},(1-\epsilon)\overline{\Sigma}+2\overline{\Delta}+2\overline{C}+2t\overline{M})
$$ 
is klt. Then 
$$
(\overline{X},(1-\epsilon)\overline{\Sigma}+\overline{\Delta}+\overline{C}+t\overline{M})
$$ 
is $\frac{\epsilon}{2}$-lc as $(\overline{X},(1-\epsilon)\overline{\Sigma})$ is $\epsilon$-lc which in turn implies 
$$
(\overline{X},\overline{\Lambda}+\overline{\Delta}+t\overline{M})
$$ 
is sub-$\frac{\epsilon}{2}$-lc because $\overline{\Lambda}\le (1-\epsilon)\overline{\Sigma}$.

Now since $K_X+\Delta+tM$ is nef and since its crepant pullback to $\overline{X}$ is just 
$$
K_{\overline{X}}+\overline{\Lambda}+\overline{\Delta}+t\overline{M},
$$ 
we deduce that $(X,\Delta+tM)$ is $\frac{\epsilon}{2}$-lc  
because by the negativity lemma
$$
\phi^*(K_X+\Delta+tM)\le \psi^*(K_{\overline{X}}+\overline{\Lambda}+\overline{\Delta}+t\overline{M}). 
$$
Therefore, $(X,tM)$ is $\frac{\epsilon}{2}$-lc.\\ 

\emph{Step 4.}
In this step we finish the proof by applying Lemma \ref{l-bnd-lc-model-ep-lc}.
First we show $K_X+t\rddown{M}$ and $2K_X+t\rddown{M}$ are big.
As noted above, the factional part of $M$, say $P$, is supported in $R$. 
Thus $\frac{1}{\delta}R-P\ge 0$. 
Then since $m$ is sufficiently large and $t$ is fixed, and since $K_X+N$ is big, 
we can ensure that 
$$
K_X+t\rddown{M}= K_X+tM-tP\sim_\Q K_X+tmN-tP 
$$
$$
= K_X+N+(tm-1-\frac{t}{\delta})N+\frac{t}{\delta}E+\frac{t}{\delta}R-tP
$$
is big. Similar reasoning shows that $2K_X+t\rddown{M}$ is also big. 

Now $(X,t\rddown{M})$ has an lc model, say $(Y,t\rddown{M}_Y)$. By construction, $(Y,t\rddown{M}_Y)$ is 
$\frac{\epsilon}{2}$-lc, the coefficients of $t\rddown{M}_Y$ are in a fixed finite set depending only 
on $t$, and 
$$
\vol(K_Y+t\rddown{M}_Y)= \vol(K_X+t\rddown{M})< \vol(N+tM)=\vol((\frac{1}{m}+t)M)
$$ 
is bounded from above as $\vol(M)\le v$ by Step 1. 
Also $2K_Y+t\rddown{M}_Y$ is big. Therefore, by Lemma \ref{l-bnd-lc-model-ep-lc}, 
such $(Y,\Supp \rddown{M}_Y)$ form a bounded family. In particular, there is a very ample divisor 
$A_Y$ on $Y$ such that $A_Y^{d}$ and $A_Y^{d-1}\cdot \Supp \rddown{M}_Y$ are bounded from above (note that the intersection 
number $A_Y^{d-1}\cdot Q$ is well-defined for any $\R$-divisor even if $Q$ is not $\R$-Cartier). 
Since the coefficients of $\rddown{M}_Y$ are bounded from above, $A_Y^{d-1}\cdot \rddown{M}_Y$ is also 
bounded from above.

Now since 
$$
\rddown{M}_Y= mN_Y-P_Y \sim_\Q (m-\frac{1}{\delta})N_Y+\frac{1}{\delta}E_Y+\frac{1}{\delta}R_Y-P_Y,
$$
where $\frac{1}{\delta}E_Y+\frac{1}{\delta}R_Y-P_Y$ is pseudo-effective, we see that 
$A_Y^{d-1}\cdot (m-\frac{1}{\delta})N_Y$ is bounded from above. However, 
$$
A_Y^{d-1} \cdot N_Y= A_Y^{d-1} \cdot (E_Y+R_Y)\ge \delta.
$$ 
Therefore,   
$$
A_Y^{d-1}\cdot (m-\frac{1}{\delta})N_Y\ge (m-\frac{1}{\delta})\delta 
$$ 
can get arbitrarily large, a contradiction.  

\end{proof}

\subsection{Some remarks}\label{rem-Fano-eff-bir-new-proof}
(1) {So far in this and the previous sections we have tried to avoid using 
results of [\ref{HMX1}][\ref{HMX2}][\ref{B-compl}][\ref{B-BAB}] 
as much as possible because one of our goals (here and in the future) is to get new proofs of some of the 
main results of those papers. To be more precise, when trying to prove a statement in dimension $d$ 
we have tried to minimise relying on results of those papers in dimension $d$. 
Although we have used many of the technical auxiliary results and ideas of the first three 
papers but we have not used their main results in dimension $d$ with the exception of 
 [\ref{HMX1}, Theorem 1.8]: indeed 
we used  [\ref{HMX2}, Theorem 1.6] in the proof of Lemma \ref{l-bnd-lc-model-ep-lc}, 
which is a quick consequence of [\ref{HMX1}, Theorem 1.8]. 
 
(2) 
We will argue that if in Propositions \ref{p-eff-bir-nefbig-weil-1} and \ref{p-eff-bir-nefbig-weil-2} 
we assume $R\neq 0$, then we can modify their proofs so that we do not need Lemma \ref{l-bnd-lc-model-ep-lc} 
nor Theorem \ref{t-eff-bir-general} in lower dimension. 
Indeed we can modify Step 6 of the proof of \ref{p-eff-bir-nefbig-weil-1} as follows. Since $R_i$ has non-zero 
coefficients $\ge \delta$, adding $\frac{1}{\delta}N$ to $\Delta$ and using [\ref{B-compl}, Lemma 3.14(2)] 
we can ensure that $(F_i,\Delta_{F_i})$ is not $\epsilon'$-lc, contradicting Step 5.

In \ref{p-eff-bir-nefbig-weil-2}, we can modify Steps 3-4 of the proof as follows. By the previous paragraph, 
we can find $m$ such that $|mN|$ defines a birational map 
and either $m$ or $\vol(mN)$ is bounded from above. Since $R\neq 0$ and since its coefficients are $\ge \delta$, 
there is 
$$
0\le L\sim_\Q \frac{3}{\delta} N=\frac{1}{\delta}N+\frac{2}{\delta}E+\frac{2}{\delta}R
$$ 
such that some coefficient of $L$ exceeds $1$. On the other hand,
$$
L+\Delta+C+tM\sim_\Q (\frac{3}{m\delta}+\frac{3}{m}+t)M,
$$
so applying the arguments of Step 3 we can assume $(X,L+tM)$ is $\frac{\epsilon}{2}$-lc. 
This is a contradiction because $(X,L+tM)$ is not lc as some coefficient of $L$ 
exceeds $1$.

(3) Let $X$ be an $\epsilon$-lc Fano variety of dimension $d$, for $\epsilon>0$, and let $N=-2K_X$.
Assuming Theorem \ref{t-eff-bir-general} in lower dimension and 
applying Proposition \ref{p-eff-bir-nefbig-weil-2}, we deduce that there is a 
natural number $m$ depending only on $d,\epsilon$ such that $|-mK_X|$ defines a 
birational map. So we get a new proof of [\ref{B-compl}, Theorem 1.2] 
which is one of the main results of that paper. The two proofs have obvious similarities 
but also quite different in some sense. The proof in [\ref{B-compl}] relies heavily on 
boundedness of complements in dimension $d$. Indeed the birational boundedness of $|-mK_X|$ 
and the boundedness of complements are proved together in an inductive process.
The current proof relies on the BAB 
[\ref{B-BAB}, Theorem 1.1] in lower dimension which is reasonable as we want to apply induction on dimension: indeed, we are applying Propossition \ref{p-eff-bir-nefbig-weil-2} which 
needs Theorem \ref{t-eff-bir-general} in dimenion $d-1$ which (as we will see) in turn relies on 
Lemma \ref{l-peff-threshold} below in dimension $d-1$ hence we need BAB in dimension $d-1$.

(4) Now assume that $X$ is a projective $\epsilon$-lc variety of dimension $d$, for $\epsilon>0$, with $K_X$ ample 
and let $N:=2K_X$. Assuming Theorem \ref{t-eff-bir-general} in lower dimension and
applying Proposition \ref{p-eff-bir-nefbig-weil-2}, we deduce that there is a
natural number $m$ depending only on $d,\epsilon$ such that $|mK_X|$ defines a 
birational map. This is a special case of [\ref{HMX2}, Theorem 1.3]. 
The proof here is not that different because in this special setting many of the 
complications of the current proof disappear and the proof simplifies to that of [\ref{HMX2}, Theorem 1.3] 
except that towards the end of the proof where we have $m$ with $|mK_X|$ birational and 
$\vol(mK_X)$ bounded and we want to show $m$ is bounded, we use different arguments.

(5) Now assume $X$ is a projective $\epsilon$-lc variety of dimension $d$, for $\epsilon>0$, 
with $K_X\equiv 0$, that is, 
$X$ is Calabi-Yau. Assume $N$ is a nef and big $\Q$-divisor on $X$ such that 
$N=E+R$ where $E$ is integral and pseudo-effective and $R\ge 0$ with coefficients in 
$\{0\}\cup [\delta,\infty)$, for some $\delta>0$. 
Then assuming Theorem \ref{t-eff-bir-general} in lower dimension and
applying Proposition \ref{p-eff-bir-nefbig-weil-2}, we deduce that there is a
natural number $m$ depending only on $d,\epsilon,\delta$ such that $|mN|$ defines a 
birational map. Note that by (2), if $R\neq 0$, then we do not need to assume 
Theorem \ref{t-eff-bir-general} in lower dimension.
Also note that we can replace the $\epsilon$-lc condition with klt but for this we need to 
use the global ACC result [\ref{HMX2}, Theorem 1.5].

\subsection{Pseudo-effective threshold of nef and big divisors}
The following lemma proves to be useful in many places in this paper. 
Its proof in dimension $d$ relies on the BAB [\ref{B-BAB}, Theorem 1.1] in dimension $d$.

\begin{lem}\label{l-peff-threshold}
Let $d$ be a natural number and $\epsilon,\delta$ be positive real numbers. Then 
there is a natural number $l$ depending only on $d,\epsilon,\delta$ satisfying the 
following. Assume 
\begin{itemize}
\item $X$ is an $\epsilon$-lc variety of dimension $d$, 

\item $X\to Z$ is a contraction, 

\item $N$ is an $\R$-divisor on $X$ which is nef and big over $Z$, and  

\item $N=E+R$ where $E$ is integral and pseudo-effective over $Z$ and $R\ge 0$ with coefficients in 
$\{0\}\cup [\delta,\infty)$.
\end{itemize}
Then $K_X+lN$ is big over $Z$.
\end{lem}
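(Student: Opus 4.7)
The plan is to bound the pseudo-effective threshold
$$
\tau:=\inf\{t\ge 0 : K_X+tN \text{ is pseudo-effective}/Z\}
$$
by a constant depending only on $d,\epsilon,\delta$; then any integer $l>\tau$ suffices, since $K_X+lN=(K_X+\tau N)+(l-\tau)N$ is the sum of a pseudo-effective and a big divisor over $Z$, hence big over $Z$. We may assume $\tau>0$, as otherwise $K_X+N$ is already big$/Z$ and $l=1$ works.

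I would produce a Mori fibre space via MMP. After a small $\Q$-factorialisation we may assume $X$ is $\Q$-factorial. Fix a small ample $\Q$-divisor $A/Z$ and a small rational $\epsilon'>0$, so that $N+\epsilon' A$ is ample$/Z$; by [\ref{BCHM}] the $K_X$-MMP over $Z$ with scaling of $N+\epsilon' A$ exists and terminates, either with a minimal model of $K_X$ (giving $K_X$ pseudo-effective$/Z$ and $l=1$) or with a Mori fibre space $f\colon X'\to T/Z$ and final scaling value $\lambda$. On a general fibre $F$ of $f$, $F$ is an $\epsilon$-lc Fano of dimension at most $d$, $N_F$ is nef and big on $F$ by Lemma \ref{l-restriction-big-divisors}, and $-K_F\equiv\lambda(N_F+\epsilon' A_F)$. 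Since $K_{X'}+\lambda(N_{X'}+\epsilon' A_{X'})$ is nef$/Z$ hence pseudo-effective$/Z$, a limiting argument as $\epsilon'\to 0$ (using that $\lambda$ will be uniformly bounded in $\epsilon'$) yields $\tau\le\lambda$.

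The heart of the argument uses BAB. Since $F$ is $\epsilon$-lc Fano of dimension at most $d$, by [\ref{B-BAB}, Theorem 1.1] it lies in a bounded family depending only on $d,\epsilon$, providing a natural number $r$, a positive real number $V$, and a very ample divisor $H_F$ on $F$, all depending only on $d,\epsilon$, such that every integral Weil divisor on $F$ has Cartier index dividing $r$ and $H_F^{\dim F-1}\cdot(-K_F)\le V$. Intersecting $-K_F\equiv\lambda(N_F+\epsilon' A_F)$ with $H_F^{\dim F-1}$ and dropping the non-negative $\epsilon' A_F$ term gives $\lambda\cdot H_F^{\dim F-1}\cdot N_F\le V$, reducing the task to a uniform positive lower bound on $H_F^{\dim F-1}\cdot N_F$. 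Using the restriction $N_F=E_F+R_F$ (valid for general $F$): if $R_F\ne 0$, then some prime component $D$ of $R_F$ has coefficient $\ge\delta$, and since $rD$ and $H_F$ are Cartier with $H_F^{\dim F-1}$ represented by an effective irreducible curve, $rH_F^{\dim F-1}\cdot D\ge 1$, which combined with $H_F^{\dim F-1}\cdot E_F\ge 0$ from pseudo-effectivity of $E_F$ gives $H_F^{\dim F-1}\cdot N_F\ge\delta/r$; if $R_F=0$, then $N_F=E_F$ is nef, big and integral, so $rN_F$ is Cartier, nef and big, whence $H_F^{\dim F-1}\cdot rN_F$ is a positive integer and $H_F^{\dim F-1}\cdot N_F\ge 1/r$. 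In either case $\lambda\le rV/\min(1,\delta)$, and we may take $l=\lceil rV/\min(1,\delta)\rceil+1$. The main technical obstacles will be the MMP/limit subtleties ensuring $\tau\le\lambda$ independent of $\epsilon'$, and verifying the uniform Cartier index $r$ on the bounded family of $\epsilon$-lc Fanos supplied by BAB.
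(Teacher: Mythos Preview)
Your overall strategy matches the paper's: reduce to an $\epsilon$-lc Fano fibre via MMP, apply BAB, and bound the threshold by intersecting with a very ample class. However, the implementations differ in two notable ways.

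First, the paper avoids your $\epsilon'$-perturbation and limiting argument entirely by treating $(X,tN)$ (with $t=\tau$) as a generalised pair with nef part $tN$ and running the generalised MMP on $K_X+tN$ over $Z$ [\ref{BZh}, Lemma~4.4]. This lands on a model where $K_{X'}+tN'$ is nef and semi-ample over $Z$, defining a non-birational contraction $X'\to V'$; a further $K_{X'}$-MMP over $V'$ then yields the Mori fibre space $X''\to W''$. Since $K_{X'}+tN'\equiv 0/V'$, generalised log discrepancies are preserved, so the Fano fibre $F''$ is automatically $\epsilon$-lc with $K_{F''}+tN_{F''}\equiv 0$, and no limit is needed. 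Your route can be made to work, but the identification $\tau=\lambda(\epsilon')$ (the final scaling value) and the passage $\epsilon'\to 0$ require exactly the extra bookkeeping you flag as an obstacle.

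Second, for the lower bound on $H_F^{\dim F-1}\cdot N_F$, the paper does not invoke a bounded Cartier index. Instead it observes that $A^{d-1}$ (for $A$ very ample on the Fano fibre) can be represented by a curve lying in the smooth locus, so its intersection with any integral Weil divisor is already an integer; this immediately gives $E_F\cdot A^{d-1}\in\Z_{\ge 0}$ and $R_F\cdot A^{d-1}\ge\delta$ when $R_F\neq 0$, hence $N_F\cdot A^{d-1}\ge\min\{1,\delta\}$. Your bounded-Cartier-index claim is true for bounded families of klt varieties but needs separate justification; the smooth-locus trick sidesteps it.

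One minor correction: your claim that $N_F$ is nef and big on $F$ is not justified after the $K_X$-MMP, since nefness of $N$ need not survive and $F$ is a fibre over $T$, not over $Z$, so Lemma~\ref{l-restriction-big-divisors} does not apply directly. Fortunately your actual argument never uses this --- you only need $H_F^{\dim F-1}\cdot E_F\ge 0$, which follows from pseudo-effectivity of $E_F$ on a very general fibre.
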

\begin{proof}
Taking a $\Q$-factorialisation we can assume $X$ is $\Q$-factorial. 
Let $t$ be the  smallest non-negative real number 
such that $K_X+tN$ is pseudo-effective over $Z$. It is enough to show $t$ is bounded from 
above because $K_X+\lceil {(t+1)}\rceil N$ is big over $Z$. In particular, we can assume $t>0$.

Consider $(X,tN)$ as a generalised pair over $Z$ with nef part $tN$. 
Then the pair is generalised $\epsilon$-lc. Since $tN$ is big over $Z$, we can run an MMP$/Z$ on $K_X+tN$ which 
ends with a minimal model $X'$ on which $K_{X'}+tN'$ is nef and semi-ample over $Z$ [\ref{BZh}, Lemma 4.4]; 
here $K_{X'}+tN'$ is the pushdown of $K_X+tN$. 
So $K_{X'}+tN'$ defines a contraction $X'\to V'/Z$ which is non-birational 
otherwise $K_X+tN$ would be big over $Z$ which means we can decrease $t$ 
keeping $K_X+tN$ pseudo-effective over $Z$, contradicting the definition of $t$ and the assumption $t>0$.

Now since $tN'$ is big over $V'$ and $K_{X'}+tN'\equiv 0/V'$, 
$K_{X'}$ is not pseudo-effective over $V'$, hence we can run an MMP$/V'$ on $K_{X'}$ 
which ends with a Mori fibre space $X''\to W''/V'$. 
Since $(X,tN)$ is generalised $\epsilon$-lc, $(X',tN')$ is generalised $\epsilon$-lc 
which in turn implies $(X'',tN'')$ is generalised $\epsilon$-lc because $K_{X'}+tN'\equiv 0/V'$ 
(note that the nef parts of both 
$(X',tN')$ and $(X'',tN'')$ are pullbacks of $tN$ to some common resolution of $X,X',X''$). Thus  
$X''$ is $\epsilon$-lc because for each prime divisor $D$ over $X''$ we have 
$$
a(D,X'',0)\ge a(D,X'',tN'')\ge \epsilon.
$$

Let $F''$ be a general fibre of $X''\to W''$. Then $F''$ is an $\epsilon$-lc Fano variety.
Restricting to $F''$ we get 
$$
K_{F''}+tN_{F''}=(K_{X''}+tN'')|_{F''}\equiv 0
$$
where $N_{F''}=N''|_{F''}$. Moreover, $N_{F''}=E_{F''}+R_{F''}$ where 
$E_{F''}=E''|_{F''}$, $R_{F''}=R''|_{F''}$ and $E'',R''$ are the pushdowns of $E,R$. 
Then $E_{F''}$ is integral and the coefficients of 
$R_{F''}$ are in $\{0\}\cup [\delta,\infty)$. On the other hand, since $E''$ is 
pseudo-effective over $W''$, $E_{F''}$ is pseudo-effective. 
Therefore, replacing $X,N,E,R$ with $F'',N_{F''},E_{F''},R_{F''}$ and replacing $Z$ with a point, 
we can assume that $X$ is an $\epsilon$-lc Fano variety and that $K_X+tN\equiv 0$.
 
Now by [\ref{B-BAB}, Theorem 1.1], $X$ belongs to a bounded family of varieties. 
Thus there is a very ample divisor $A$ on $X$ with $-K_X\cdot A^{d-1}$ bounded from above. 
If $R\neq 0$, then $E\cdot A^{d-1}\ge 0$ and $R\cdot A^{d-1}\ge \delta$ because $A^{d-1}$ can be 
represented by a curve inside the smooth locus of $X$. But if $R=0$, then $N\cdot A^{d-1}=E\cdot A^{d-1}\ge 1$. 
Letting $\lambda=\min\{1,\delta\}$, we then have 
$$
N\cdot A^{d-1}=(E+R)\cdot A^{d-1}\ge \lambda.
$$ 
 Thus 
$$
t=\frac{-K_X\cdot A^{d-1}}{N\cdot A^{d-1}}\le -\frac{1}{\lambda}K_X\cdot A^{d-1}
$$ 
is bounded from above. Finally note that $\lambda$ depends only on $\delta$ while 
$-K_X\cdot A^{d-1}$ depends only on $d,\epsilon$.

\end{proof}

\subsection{Proofs of \ref{t-eff-bir-general}, \ref{t-eff-bir-nefbig-weil},
\ref{cor-eff-bir-nefbig-general}, \ref{cor-eff-bir-nefbig-weil}, \ref{cor-cy}}

\begin{proof}(of Theorem \ref{t-eff-bir-general})
We apply induction on dimension so assume the theorem holds in lower dimension.
Taking a small $\Q$-factorialistion we can assume that $X$ is $\Q$-factorial. 
Replacing $N$ with $2N$ we can assume $N-K_X$ is big.
By Lemma \ref{l-peff-threshold}, there is a natural number $l$ depending only on $d,\epsilon,\delta$ 
such that $K_{X}+lN$ is big. Replacing $l$ we can assume that $l\ge \frac{1}{\delta}$. 
There is an $\R$-divisor $B\le \frac{1}{2}R$ such that  
$$
M:=K_X+B+3lN
$$ 
is a $\Q$-divisor and $(X,B)$ is $\frac{\epsilon}{2}$-lc. 
In particular, $(X,B+3lN)$ is generalised $\frac{\epsilon}{2}$-lc where the nef part is $3lN$.

Now $\rddown{K_X+2lN}$ is big because if $Q$ is the fractional part of $K_X+2lN$, then 
$$
\rddown{K_X+2lN}=K_X+2lN-Q=K_X+lN+lE+lR-Q
$$
where $lR-Q\ge 0$ as $Q$ is supported in $\Supp R$ and $lR$ has non-zero coefficients $\ge 1$. 
Thus letting 
$$
F:=\rddown{K_X+2lN}+lE ~~~~\mbox{and}~~~~~ S=B+Q+lR
$$ 
we see that 
$M=F+S$ where $F$ is big and integral and the non-zero coefficients of $S$ are $\ge 1$.

We can run an MMP on $M$ ending with a minimal model $X'$ [\ref{BZh}, Lemma 4.4]. Then 
$(X',B'+3lN')$ is generalised $\frac{\epsilon}{2}$-lc which implies that $X'$ is $\frac{\epsilon}{2}$-lc.  
Moreover, under our assumptions, $M'$ is a nef and big $\Q$-divisor, and 
$M'-K_{X'}$, and $M'+K_{X'}$ are big, and 
$M'=F'+S'$ where $F'$ is big and integral and the non-zero coefficients of $S'$ are $\ge 1$. 
Therefore, applying Proposition \ref{p-eff-bir-nefbig-weil-2}, 
there is a natural number $n$ depending only on $d,\epsilon,\delta$ such that $|nM'|$ defines 
a birational map. Then $|nM|$ also defines a birational map. 

Let $m=(20dn+2)l$. Let $L$ be any pseudo-effective integral divisor. We will show that $|m'N+L|$ and $|K_X+m'N+L|$ 
define birational maps for any natural number $m'\ge m$. 
Since $B\le \frac{1}{2}R$ and $N$ is big and $E$ is pseudo-effective, 
$$
N-B=E+R-B
$$ 
is big. Moreover, if $P$ is the fractional part of 
$m'N+L$, then $P$ is supported in $R$ and $P<lR$. Now since $|nM|$ defines a birational map, 
$4dnM+L$ and $4dnM+L-K_X$ are potentially birational. Moreover, $lN-K_X$ and $lN-B$ are big 
and $3lN=M-K_X-B$. Then
$$
\rddown{m'N+L}=m'N+L-P=(20dn+1)lN+L-P+(m'-m+l)N
$$
$$
=4dn(M-K_X-B)+8dnlN+lN+L-P+(m'-m+l)N
$$
$$
=4dnM+L+4dn(lN-K_X)+4dn(lN-B)+lE+lR-P+(m'-m+l)N
$$
is potentially birational. Similar reasoning shows that $\rddown{m'N+L-K_X}$ is potentially birational
where we make use of the extra $(m'-m+l)N$ at the end of the above formula and the fact that $N-K_X$ is big.

Therefore, $|K_X+\rddown{m'N+L}|$ and $|\rddown{m'N+L}|$ define birational maps by [\ref{HMX1}, Lemma 2.3.4]. 
This in turn implies that $|K_X+{m'N+L}|$ and $|{m'N+L}|$ define birational maps. 

\end{proof}

\begin{proof}(of Theorem \ref{t-eff-bir-nefbig-weil})
This follows from Theorem \ref{t-eff-bir-general}.

\end{proof}

\begin{proof}(of Corollary \ref{cor-eff-bir-nefbig-general})
By Lemma \ref{l-peff-threshold}, there is a natural number $n>1$ depending only on $d,\epsilon,\delta$ 
such that $K_X+nN$ is big. Let $r\in\N$ be the smallest number such that $r\delta\ge 1$. 
Let $$
M:=K_X+(n+2r)N
$$
and 
$$
F:=\rddown{K_X+(n+r)N+rE},
$$ 
and $T=M-F$. Then $F$ is big because if $P$ 
is the fractional part of 
$$
K_X+(n+r)N+rE,
$$
 then 
$$
F=K_X+(n+r)N+rE-P=K_X+nN+2rE+rR-P
$$
where $rR-P\ge 0$ as $P$ is supported in $R$ and the non-zero coefficients of $rR$ are $\ge 1$. 
Moreover, 
$$
T=M-F=K_X+(n+2r)N-(K_X+(n+r)N+rE-P)=rR+P
$$ 
is supported in $R$ with any non-zero  
coefficient $\ge 1$.

Considering $(X,(n+2r)N)$ as a generalised pair with nef part $(n+2r)N$, running an MMP on $M$  
ends with a minimal model $X'$ [\ref{BZh}, Lemma 4.4]. Then $M'=F'+T'$ is nef and big 
where $F'$ is integral and big and $T'\ge 0$ with any non-zero  
coefficient $\ge 1$, $M'-K_{X'}$ is big, and $X'$ is $\epsilon$-lc. 
Applying Theorem \ref{t-eff-bir-general}, there is a 
natural number $m$ depending only on $d,\epsilon$ such that $|mM'|$ defines a birational map. 
Therefore, $|mM|$ also defines a birational map. 

Now since $|mM|$ defines a birational map, $3dmM$ is potentially birational. 
Replacing $m$ with  $3dm+1$, we can assume $mM-K_X$ is potentially birational. 
Let $l=n+3r$. Let $L$ be an integral pseudo-effective divisor, and pick natural numbers $m'\ge m$ and $l'\ge lm'$.
If $Q$ is the fractional part of  $m'K_X+l'N+L-K_X$, then we see that 
$$
\rddown{m'K_X+l'N+L-K_X}=m'K_X+l'N+L-K_X-Q
$$
$$
=m'M+(l'-lm'+rm')N+L-K_X-Q 
$$
$$
=mM-K_X+(m'-m)M+(l'-lm')N+L+rm'N-Q    
$$
is potentially birational because $rm'R-Q\ge 0$ as $Q$ is supported in $R$. 
Therefore, 
$$
|\rddown{m'K_X+l'N+L}|
$$ 
defines a birational map by [\ref{HMX2}, 2.3.4] which in turn implies that 
$$
|m'K_X+l'N+L|
$$ 
defines a birational map.

\end{proof}

\begin{proof}(of Corollary \ref{cor-eff-bir-nefbig-weil})
This is a special case of Corollary \ref{cor-eff-bir-nefbig-general}.

\end{proof}

\begin{proof}(of Corollary \ref{cor-cy})
Since $K_X+B\equiv 0$ and $N$ is big, there is a minimal model of $N$. Replacing $X$ 
with the minimal model we can assume $N$ is nef and big. Taking a $\Q$-factorialisation we 
can assume that $X$ is $\Q$-factorial. On the other hand, there is a positive real number $\epsilon$ 
depending only on $d,\Phi$ such that $X$ is $\epsilon$-lc: this follows from the global ACC 
result of [\ref{HMX2}], see for example [\ref{B-compl}, Lemma 2.48]. 
Now  $N-K_X\equiv N+B$ is big, so we can apply Theorem \ref{t-eff-bir-nefbig-weil}.
  
\end{proof}

Note that in the previous proof if $B=0$, then $N+K_X$ is big so instead of \ref{t-eff-bir-nefbig-weil} 
we can apply \ref{p-eff-bir-nefbig-weil-2} if we assume \ref{t-eff-bir-general} in lower dimension.\\


\section{\bf Birational boundedness on pseudo-effective pairs}

In this section we treat birational boundedness of divisors on pairs $(X,B)$ with pseudo-effective $K_X+B$.

\subsection{Main result}
The main result of this section is the following more general form of Theorem \ref{t-eff-bir-nefbig-klt}.

\begin{thm}\label{t-eff-bir-nefbig-klt-general}
Let $d$ be a natural number, $\delta$ be a positive real number, and $\Phi\subset [0,1]$ be a DCC set of rational numbers. 
Then there is a natural number $m$ depending only on $d,\delta,\Phi$ satisfying the following.
Assume 
\begin{itemize}
\item $(X,B)$ is a klt projective pair of dimension $d$, 

\item the coefficients of $B$ are in $\Phi$, 

\item $N$ is a nef and big $\R$-divisor, 

\item $N-(K_X+B)$ and $K_X+B$ are pseudo-effective, and 

\item $N=E+R$ where $E$ is integral and pseudo-effective  and $R\ge 0$ with coefficients in 
$\{0\}\cup [\delta,\infty)$.
\end{itemize}
Then $|m'N+L|$ and $|K_X+m'N+L|$ define birational maps for any natural number $m'\ge m$ and any 
integral pseudo-effective divisor $L$.
\end{thm}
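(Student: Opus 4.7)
The plan is to mirror the inductive structure of Theorem \ref{t-eff-bir-general}, using the pseudo-effectivity of $K_X+B$ as a substitute for the $\epsilon$-lc condition. I would argue by induction on $d$, assuming the statement in dimension $<d$. After taking a small $\Q$-factorialization and approximating $N$ by a $\Q$-divisor so that the decomposition $N=E+R$ is preserved, the base case $d=1$ is elementary. For the inductive step, the overall goal is to prove an analogue of Proposition \ref{p-eff-bir-nefbig-weil-1} (either the birationality index $m$ or $\vol(mN)$ is bounded), then an analogue of Proposition \ref{p-eff-bir-nefbig-weil-2} (bounding $m$ itself), and finally to conclude with the usual potentially-birational arguments that lift to the full statement about $|m'N+L|$ and $|K_X+m'N+L|$.

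For a general pair of closed points $x,y\in X$, since $N$ is nef and big and $K_X+B$ is pseudo-effective, for $n$ sufficiently large the divisor $nN-(K_X+B)$ is big with volume exceeding $(2d)^d$. Applying \ref{ss-non-klt-centres}(2) and (3), we can find $\Delta\sim_\Q nN-(K_X+B)$ and a member $G$ of a covering family of subvarieties such that $(X,B+\Delta)$ is lc near $x$ with a unique non-klt place whose centre is $G$, and non-klt at $y$, with $\dim G$ controlled inductively. Let $F$ denote the normalization of $G$. The adjunction formula from \ref{constr-adjunction-non-klt-centre} applied to the klt pair $(X,B)$ then yields
\[
K_F+\Theta_F+P_F \sim_\R (K_X+B+\Delta)|_F \sim_\R nN|_F,
\]
where the coefficients of $\Theta_F$ lie in a fixed DCC set $\Psi$ depending only on $d,\Phi$, and $P_F$ is pseudo-effective and may be chosen effective in its $\R$-linear equivalence class after enlarging $n$ by one.

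The idea is to apply the induction hypothesis on $F$ with nef-big divisor $\tilde{N}_F:=qnN|_F$, for a suitable bounded $q\in\N$, and boundary $\Theta_F$. The two crucial pseudo-effectivity hypotheses of the theorem on $F$ are automatic for $n$ sufficiently large: indeed, since $N|_F$ is big and $P_F$ is pseudo-effective, the rearrangement $K_F+\Theta_F\sim_\R nN|_F-P_F$ is pseudo-effective, and on the other hand $\tilde{N}_F-(K_F+\Theta_F)=(q-1)nN|_F+P_F$ is pseudo-effective as well. The pair $(F,\Theta_F)$ is klt by the uniqueness of the non-klt place whose centre is $G$, combined with [\ref{B-compl}, Lemma 2.33]. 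So provided the decomposition $\tilde{N}_F=\tilde{E}_F+\tilde{R}_F$ can be arranged with $\tilde{E}_F$ integral pseudo-effective and the non-zero coefficients of $\tilde{R}_F$ bounded below by some fixed $\delta'>0$, the induction gives $|m \tilde{N}_F|$ birational for $m$ depending only on $d,\delta,\Phi$; combined with bounds on $\vol(nN|_F)$ in terms of $\vol(mN)$, this feeds back into a birational/volume dichotomy on $X$ as in \ref{p-eff-bir-nefbig-weil-1}.

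The main obstacle is precisely the control of the decomposition of $N|_F$: in the $\epsilon$-lc case of Section 4, Proposition \ref{p-non-klt-centres-restriction-weil-div} ensures that $qE|_F$ is integral and $qR|_F$ has non-zero coefficients $\ge\delta$, but its proof relies crucially on $(F,\Theta_F+P_F)$ being $\epsilon$-lc, which is not available here. To overcome this I would add a small multiple of $R$ into the boundary: the ACC for lc thresholds [\ref{HMX2}, Theorem 1.1] together with the DCC hypotheses on $\Phi$ and the $\delta$-bound on $R$ gives a uniform $\eta>0$ with $(X,B+\eta R)$ still klt; since $(X,B+\eta R)$ has boundary in a DCC set, the induction hypothesis applies to this modified pair, and the analysis of \ref{constr-adjunction-non-klt-centre} and the descent results of Section 3 can be adapted (perhaps using generalised-pair formalism) to yield a uniform $\delta'$. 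Once this obstacle is cleared, the upgrade from birational-or-volume boundedness to $m$-boundedness follows the template of Proposition \ref{p-eff-bir-nefbig-weil-2} together with Lemma \ref{l-bnd-lc-model-ep-lc}, and the final birationality of $|m'N+L|$ and $|K_X+m'N+L|$ is obtained by the same potentially-birational manipulation as in the proof of Theorem \ref{t-eff-bir-general}, exploiting that $K_X+B$ is pseudo-effective in place of the earlier argument using $N-K_X$ big.
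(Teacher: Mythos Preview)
Your approach diverges fundamentally from the paper's and has a genuine gap at precisely the point you flag as the ``main obstacle.''

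The paper does not argue by induction on dimension through non-klt centres at all. Instead it reduces directly to the already-established $\epsilon$-lc case, Theorem \ref{t-eff-bir-general}. The key new ingredient is Proposition \ref{p-pseudo-eff-rddwn}: for some $l$ depending only on $d,\Phi$, if $K_X+B$ is pseudo-effective with coefficients in $\Phi\cup(\frac{l-1}{l},1]$, then $K_X+B_{\rddown{l}}$ is still pseudo-effective. One then extracts all prime divisors with $a(D,X,B)<\frac{1}{2l}$ to obtain $X'\to X$; the resulting $X'$ is $\frac{1}{2l}$-lc, the exceptional divisors appear in $B'$ with coefficient in $(1-\frac{1}{2l},1)$, and $K_{X'}+B'_{\rddown{l}}$ remains pseudo-effective. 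A careful combinatorial construction then writes a bounded multiple of $N'$ as $(F'+G')+(T'+V')$ with $F'+G'$ integral and big and $T'+V'\ge 0$ with non-zero coefficients $\ge 1$, so Theorem \ref{t-eff-bir-general} applies on $X'$. The whole point is to manufacture an $\epsilon$-lc model rather than to work around the absence of one.

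Your proposed fix does not close the gap. First, the claim that $K_F+\Theta_F\sim_\R nN|_F-P_F$ is pseudo-effective is unjustified: $P_F$ is only known to be pseudo-effective (or a specific effective representative), and subtracting it from a big divisor need not yield something pseudo-effective; there is no reason $(F,\Theta_F)$ should have pseudo-effective log canonical class. Second, the ACC for lc thresholds does not produce a uniform $\eta>0$ with $(X,B+\eta R)$ klt: since $(X,B)$ is merely klt and not $\epsilon$-lc, the lc threshold of $R$ can be arbitrarily small (think of $R$ passing through a klt point of very large index). Without a uniform $\epsilon$-lc bound on $(F,\Theta_F+P_F)$ you cannot invoke Proposition \ref{p-non-klt-centres-restriction-weil-div}, and the same lack of an $\epsilon$-lc bound obstructs your proposed analogues of Proposition \ref{p-eff-bir-nefbig-weil-2} and Lemma \ref{l-bnd-lc-model-ep-lc}, both of which use $\epsilon$-lc essentially (the latter through [\ref{HMX2}, Theorem 1.6]). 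The paper's extraction-to-$\epsilon$-lc manoeuvre is exactly what is needed to escape this circle.
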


\subsection{Pseudo-effective log divisors}
For a real number $b$ and a natural number $l$ let $b_{\rddown{l}}:=\frac{\rddown{lb}}{l}$.
Similarly for an $\R$-divisor $B$ and a natural number $l$ let $B_{\rddown{l}}:=\frac{\rddown{lB}}{l}$. 
The following statement was proved in 
[\ref{HMX2}] when $K_X+B$ is big (it follows from [\ref{HMX2}, Lemma 7.3]). 
We extend it to the case when $K_X+B$ is only pseudo-effective.

\begin{prop}\label{p-pseudo-eff-rddwn}
Let $d$ be a natural number and $\Phi\subset [0,1]$ be a DCC set of rational numbers. 
Then there is a natural number $l$ depending only on $d,\Phi$ satisfying the following. Assume 
\begin{itemize}
\item $(X,B)$ is an lc projective pair of dimension $d$,

\item the coefficients of $B$ are in $\Phi\cup (\frac{l-1}{l},1]$, and 

\item $K_X+B$ is pseudo-effective.
\end{itemize}
Then $K_X+B_{\rddown{l}}$ is pseudo-effective. 
\end{prop}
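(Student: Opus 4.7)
The plan is to mimic the strategy of [\ref{HMX2}, Lemma 7.3], which treats the analogous claim when $K_X+B$ is big, and extend it to the pseudo-effective setting via a Mori fibre space argument. Arguing by contradiction, suppose there exist lc projective pairs $(X_i,B_i)$ of dimension $d$ and natural numbers $l_i\to\infty$ such that each $B_i$ has coefficients in $\Phi\cup(\frac{l_i-1}{l_i},1]$, each $K_{X_i}+B_i$ is pseudo-effective, yet each $K_{X_i}+(B_i)_{\rddown{l_i}}$ is not. Passing to $\Q$-factorial dlt modifications (noting that $1_{\rddown{l_i}}=1$ so the exceptional contributions are unaffected by round-down, and that pushforward preserves pseudo-effectivity in the birational setting) we may assume each $X_i$ is $\Q$-factorial. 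Set $C_i:=B_i-(B_i)_{\rddown{l_i}}\ge 0$; every coefficient of $C_i$ lies in $[0,1/l_i)$.

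Next I would run a $(K_{X_i}+(B_i)_{\rddown{l_i}})$-MMP with scaling of $C_i$. Since $K_{X_i}+(B_i)_{\rddown{l_i}}+C_i=K_{X_i}+B_i$ is pseudo-effective while $K_{X_i}+(B_i)_{\rddown{l_i}}$ is not, the smallest scaling value $t_i$ lies in $(0,1]$, and the MMP terminates at a Mori fibre space $Y_i\to T_i$ on which $K_{Y_i}+(B_i)_{Y_i,\rddown{l_i}}$ is anti-ample over $T_i$ while $K_{Y_i}+(B_i)_{Y_i,\rddown{l_i}}+t_iC_{i,Y_i}$ is numerically trivial over $T_i$. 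Restricting to a general fibre $F_i$ via Lemma \ref{l-fibres-over-smooth} produces an lc pair $(F_i,D_i)$ with $K_{F_i}+D_i\equiv 0$ whose coefficients are of the form $(1-t_i)b_{\rddown{l_i}}+t_ib$ for coefficients $b$ of $B_{i,Y_i}$ along components meeting $F_i$.

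A limiting argument combined with DCC of $\Phi$ places the coefficients of the $D_i$ in a uniform DCC subset of $[0,1]$, so the global ACC theorem [\ref{HMX2}, Theorem 1.5] forces them into a fixed finite subset $\Phi_0$. Since consecutive elements of $\Phi_0$ are separated by a positive amount while $|(1-t_i)b_{\rddown{l_i}}+t_ib-b|\le(1-t_i)/l_i\to 0$, for $i$ large we must have $(1-t_i)(b-b_{\rddown{l_i}})=0$ for every relevant $b$. If every such $b$ satisfies $l_ib\in\Z$ then $C_{i,Y_i}|_{F_i}=0$, so $K_{F_i}+(B_i)_{Y_i,\rddown{l_i}}|_{F_i}$ would be both anti-ample on $F_i$ and numerically trivial, a contradiction; hence some $b$ satisfies $l_ib\notin\Z$, forcing $t_i=1$. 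Consequently $K_{Y_i}+B_{i,Y_i}\equiv 0$ over $T_i$ and $C_{i,Y_i}$ is relatively ample over $T_i$ yet has all coefficients strictly less than $1/l_i$.

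The main obstacle is extracting the final contradiction. My approach is to restrict to $F_i$: $(F_i,B_{i,Y_i}|_{F_i})$ is an lc log Calabi--Yau pair with coefficients in $\Phi_0$ (hence bounded below by some $\phi_0>0$), and $C_{i,Y_i}|_{F_i}$ is ample with support inside $\Supp B_{i,Y_i}|_{F_i}$, which yields the coefficient-wise inequality $C_{i,Y_i}|_{F_i}\le(1/(l_i\phi_0))B_{i,Y_i}|_{F_i}$ and hence $\vol(C_{i,Y_i}|_{F_i})\le (l_i\phi_0)^{-\dim F_i}\vol(B_{i,Y_i}|_{F_i})$. Combining this shrinking upper bound with a uniform positive lower bound on $\vol(C_{i,Y_i}|_{F_i})$ — to be obtained by showing the log Fano pairs $(F_i,(B_i)_{Y_i,\rddown{l_i}}|_{F_i})$ form a bounded family via [\ref{B-BAB}, Theorem 1.1], after suitably handling the components whose coefficient lies in $(\frac{l_i-1}{l_i},1]$ so that the $\epsilon$-lc hypothesis becomes applicable for some fixed $\epsilon>0$ — yields the desired contradiction. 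Establishing this boundedness in the presence of coefficients approaching $1$, which prevents a direct application of BAB, is the technical heart of the argument.
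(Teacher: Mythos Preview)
Your overall strategy (contradiction, dlt modification, MMP to a Mori fibre space, restrict to a general fibre, apply global ACC) matches the paper's, but there are genuine gaps that the paper handles and you do not.

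\textbf{Preservation of lc along the MMP.} Your MMP is $(K_{X_i}+(B_i)_{\rddown{l_i}})$-negative; on each contracted ray the scaling relation gives $K_{X_i}+B_i = (K_{X_i}+(B_i)_{\rddown{l_i}}+\lambda C_i) + (1-\lambda)C_i>0$, so the steps are $(K_{X_i}+B_i)$-\emph{positive}. Hence there is no reason $(Y_i,B_{i,Y_i})$ stays lc, and without this your restriction $(F_i,D_i)$ need not be lc, so global ACC cannot be applied. The paper confronts this directly (its Step~3): assuming $(Y^l,B_{Y^l})$ fails to be lc for infinitely many $l$, it produces boundaries $C_{Y^l}$ with $\Delta_{Y^l}\le C_{Y^l}\le B_{Y^l}$ whose coefficients are DCC and which realise an infinite strictly increasing set of lc thresholds, contradicting ACC for lc thresholds.

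\textbf{The step ``$(1-t_i)(b-b_{\rddown{l_i}})=0$'' is not justified, and the BAB detour is both unnecessary and incomplete.} Knowing the coefficient $(1-t_i)b_{\rddown{l_i}}+t_ib$ lies in a finite set $\Phi_0$ and is within $1/l_i$ of $b$ does \emph{not} force it to equal $b$: $b$ itself could be drifting, approaching an element of $\Phi_0$ from above. What one actually needs is that the relevant $b$'s form a finite set. The paper obtains this cleanly: pick a horizontal component with $b^l_{\rddown{l}}<a^l\le b^l$, where $a^l\in\Phi_0$; then $a^l\le b^l<a^l+1/l$, and DCC of the $b^l$ together with finiteness of the $a^l$ forces the $b^l$ into a finite set. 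Now a single $p$ clears all denominators of the $b^l$, and for $l$ divisible by $p$ one gets $b^l_{\rddown{l}}=b^l$, contradicting $b^l_{\rddown{l}}<a^l\le b^l$. This finishes the proof with no appeal to BAB; your proposed endgame via boundedness of the $F_i$ runs into exactly the obstacle you flag (coefficients approaching $1$), and is superfluous once the finite-set observation is made.
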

\begin{proof}
\emph{Step 1.} 
In this step we introduce some notation.
Adding $1$ to $\Phi$ we can assume $1\in \Phi$. 
Assume the proposition does not hold. Then for each $l\in \N$ there is a pair $(X^l,B^l)$ such that 
$(X^l,B^l)$ is lc projective of dimension $d$, the coefficients of $B^l$ are in $\Phi\cup (\frac{l-1}{l},1]$, 
and $K_{X^l}+B^l$ is pseudo-effective but such that $K_{X^l}+B^l_{\rddown{l}}$ is not pseudo-effective. 
Let $(V^l,R^l)$ be a $\Q$-factorial dlt model of $(X^l,B^l)$. Then $(V^l,R^l)$ is lc projective of dimension $d$, the coefficients of $R^l$ are in $\Phi\cup (\frac{l-1}{l},1]$, 
and $K_{V^l}+R^l$ is pseudo-effective. Moreover, the pushdown of $R^l_{\rddown{l}}$ is just $ B^l_{\rddown{l}}$, so  $K_{V^l}+R^l_{\rddown{l}}$ is not pseudo-effective otherwise  $K_{X^l}+B^l_{\rddown{l}}$ would be pseudo-effective.
Thus replacing $(X^l,B^l)$ with $(V^l,R^l)$ we can assume $(X^l,B^l)$ is $\Q$-factorial dlt.

Also increasing coefficients of $B^l$ in $(\frac{l-1}{l},1)$ slightly we can assume that 
$B^l$ is a $\Q$-divisor; note that this does not change $B^l_{\rddown{l}}$ because for any $b\in (\frac{l-1}{l},1)$, 
we have $b_{\rddown{l}}=\frac{l-1}{l}$.

Let $\Psi$ be the union of the coefficients of all the $B^l$. Then $\Psi$ is a DCC set: since $\Phi$  
is DCC it is enough to check that $\Psi\setminus \Phi$ is DCC; the latter follows from the fact that if  
$b_l\in \Psi\setminus \Phi$ is a coefficient of $B^l$, then  
$b_l\in (\frac{l-1}{l},1]$, so any infinite sequence of such coefficients approaches $1$, hence cannot be a 
strictly decreasing sequence.\\

\emph{Step 2.} 
In this step we show certain sets of coefficients are DCC.
Suppose that for each $l$ we have a boundary $C^l$ such that $B^l_{\rddown{l}}\le C^l\le B^l$. 
We argue that the set of 
coefficients of all the $C^l$ put together satisfies DCC. Assume not. Then there is an infinite 
subset $L\subset \N$ of numbers that for each $l\in L$ we can  
 pick a coefficient $c^l$ of $C^l$ such that the 
$c^l$ form a strictly decreasing sequence, that is, $c_{l'}<c_l$ for any $l,l'\in L$ 
with $l<l'$. For each $l$, $c^l$ is the coefficient of $C^l$ of some component, say $D^l$.
Let $b^l$ be the coefficient of $D^l$ in $B^l$.  
Replacing $L$ with an infinite subset we can assume that the 
$b^l$ with $l\in L$ form an increasing sequence approaching a limit $b$. But then the numbers $c^l$
also approach $b$ as $l$ goes to $\infty$ because 
$$
b^l-\frac{1}{l} < b^l_{\rddown{l}}\le c^l\le b^l
$$ 
 where the first inequality follows from $lb^l-1<\rddown{lb^l}$, a contradiction.\\  

\emph{Step 3.} 
In this step we run an MMP and reduce to the case in which we have a Mori fibre space structure 
$X^l\to T^l$ for all but finitely many $l$.
For ease of notation in this step we write $\Delta^l:=B^l_{\rddown{l}}$. Since $K_{X^l}+\Delta^l$ is not 
pseudo-effective and since $({X^l},\Delta^l)$ is $\Q$-factorial dlt, we can run an MMP on $K_{X^l}+\Delta^l$, 
with scaling of some ample divisor, ending with a Mori fibre space  
$Y^l\to T^l$ [\ref{BCHM}]. Denote the pushdowns of $B^l,\Delta^l$ 
to $Y^l$ by $B_{Y^l},\Delta_{Y^l}$. 

By assumption, $K_{Y^l}+B_{Y^l}$ is pseudo-effective, hence it is 
nef over $T^l$. We claim that  
$(Y^l,B_{Y^l})$ is lc for all but finitely many $l$. Assume not. Then there is an infinite subset $L\subset \N$ 
such that for each $l\in L$, $(Y^l,B_{Y^l})$ is not lc.
Then for each $l\in L$ we have a boundary $C_{Y^l}$ such that 
\begin{itemize}
\item $\Delta_{Y^l}\le C_{Y^l}\le B_{Y^l}$, 
\item $(Y^l,C_{Y^l})$ is lc, 
\item some component $D^l$ of $C_{Y^l}$ contains a non-klt centre of $(Y^l,C_{Y^l})$, and 
\item if $c^l,b^l$ are the coefficients of $D^l$ in $C_{Y^l}$ and $B_{Y^l}$ respectively, then $c^l<b^l$.
\end{itemize}
By the previous step, the set of the coefficients of all the $C_{Y^l}$ satisfies DCC.
Moreover, the set of the $c^l$ is not finite 
because otherwise replacing $L$ we can assume $c^l$ is fixed and then from 
$b^l-\frac{1}{l}<c^l<b^l$ we deduce that the $b^l$ approach $c^l$ so the set of 
the $b^l$ is not DCC, a contradiction. Therefore, replacing $L$ we can assume that the $c^l$ 
form a strictly increasing sequence.
Now $c^l$ is the lc threshold of $D^l$ with respect to the pair  
$({Y^l},C_{Y^l}-c^lD^l)$ as $D^l$ contains a non-klt centre of $(Y^l,C_{Y^l})$. 
Moreover, the set of the coefficients of all the $C_{Y^l}-c^lD^l$ satisfies DCC.
Therefore, we get a contradiction by the ACC for lc thresholds [\ref{HMX2}, Theorem 1.1].

For those $l$ such that $(Y^l,B_{Y^l})$ is lc we replace $(X^l,B^l)$ with $(Y^l,B_{Y^l})$ (then $(X^l,B^l)$ may no longer be dlt but $({X^l},B^l_{\rddown{l}})$ is still dlt). 
Therefore,  we can assume that for all but finitely many $l$ we have a Mori fibre space 
structure $X^l\to T^l$ such that $K_{X^l}+B^l_{\rddown{l}}$ is 
anti-ample over $T^l$.\\

\emph{Step 4.} 
In this step we derive a contradiction.
For each $l$ as in the previous paragraph (that is, those for which we have $X^l\to T^l$) 
we can find a boundary $\Theta^l$ such that 
$B^l_{\rddown{l}}\le \Theta^l\le B^l$ and $K_{X^l}+\Theta^l\equiv 0/T^l$. 
By Step 2, the set $\Omega$ of the coefficients of all such $\Theta^l$ form a DCC set. 
Let $F^l$ be a general fibre of $X^l\to T^l$ and let $\Theta_{F^l}=\Theta^l|_{F^l}$ (if $\dim T^l=0$, then $F^l=X^l$). 
Then $(F^l,\Theta_{F^l})$ is lc, $K_{F^l}+\Theta_{F^l}\equiv 0$, and the coefficients of 
$\Theta_{F^l}$ belong to $\Omega$ because 
if $\Theta^l=\sum a_{i}^l B_{i}^l$ where $B_{i}^l$ are the irreducible components, 
then $\Theta_{F^l}=\sum a_{i}^lB_{i}^l|_{F^l}$ where $B_{i}^l|_{F^l}$ are reduced with no common 
components for distinct $i$. 
By the global ACC [\ref{HMX2}, Theorem 1.5], the set of the coefficients of all the 
$\Theta_{F^l}$ is finite, hence the set of the horizontal (over $T^l$) coefficients of all the $\Theta^l$ is also 
finite. 

Since $K_{X^l}+B^l_{\rddown{l}}$ is 
anti-ample over $T^l$,  we can find a horizontal component of $B^l$ with coefficients 
$b^l_{\rddown{l}}, a^l,b^l$ in $B^l_{\rddown{l}},\Theta^l,B^l$, respectively, such that 
$b^l_{\rddown{l}}<a^l$.
Since the $a^l$ belong to a finite set, the $b^l$ also belong to a finite set otherwise they would not form a 
DCC set. Thus there is a natural number $p$ such that $pb^l$ is integral for all such $b^l$. 
But then for every $l$ divisible by $p$, we have 
$$
b^l_{\rddown{l}}=\frac{\rddown{lb^l}}{l}=\frac{{lb^l}}{l}=b^l,
$$ 
contradicting $b^l_{\rddown{l}}<a^l\le b^l$.
 
\end{proof}

\subsection{Proofs of \ref{t-eff-bir-nefbig-klt-general} and \ref{t-eff-bir-nefbig-klt}}

\begin{proof}(of Theorem \ref{t-eff-bir-nefbig-klt-general})
\emph{Step 1.}
In this step we introduce some notation. 
Let $l$ be as in Proposition \ref{p-pseudo-eff-rddwn} for the data $d,\Phi$. Let $(X,B),N,E,R$ be as in 
Theorem \ref{t-eff-bir-nefbig-klt-general}.
Replacing $X$ with a $\Q$-factorialisation we 
can assume that $X$ is $\Q$-factorial. Let $X'\to X$ be the birational map which extracts exactly
the exceptional prime divisors $D$ over $X$  with log discrepancy 
$$
a(D,X,B) <\frac{1}{2l}.
$$  
Here by exceptional prime divisors over $X$ we mean prime divisors on birational models of 
$X$ which are exceptional over $X$.
If there is no such $D$, then $X'\to X$ is the identity morphism.
Let $K_{X'}+B',N',E',R'$ be the pullbacks of $K_X+B, N, E,R$ to $X'$, respectively.\\ 

\emph{Step 2.}
In this step we study $\Delta':=B'_{\rddown{l}}$. By construction, $K_{X'}+B'$ is pseudo-effective 
and the coefficients of $B'$ belong to $\Phi\cup (1-\frac{1}{2l},1)$ because the non-exceptional$/X$ 
components of $B'$ have coefficients in $\Phi$ and the exceptional$/X$ components 
$D$ of $B'$ have coefficients  
$$
\mu_DB'=1-a(D,X,B)\in (1-\frac{1}{2l},1)
$$ 
as 
$$
a(D,X,B)\in (0,\frac{1}{2l}).
$$
Thus by Proposition \ref{p-pseudo-eff-rddwn}, $K_{X'}+\Delta'$ is pseudo-effective.
Moreover, for exceptional $D$, 
$
\mu_D\Delta'=1-\frac{1}{l} 
$ 
by definition of $\Delta'$, hence $\mu_D(B'-\Delta')> \frac{1}{2l}$.

On the other hand, by our choice of $X'\to X$, for any exceptional prime divisor $C$ over 
$X'$,  we have  
$$
a(C,X',B')=a(C,X,B)\ge \frac{1}{2l}.
$$
Thus $(X',0)$ is $\frac{1}{2l}$-lc.\\

\emph{Step 3.}
In this step we introduce some notation. Let $r\in \N$ be the smallest number such that $r\delta\ge 1$. 
Let  
\begin{itemize}

\item $S'$ be the sum of all the exceptional$/X$ prime divisors on $X'$,

\item  $J'=\Supp (S'+R')$,

\item $F'=\rddown{6lN'-2S'+J'}$, 

\item $P'$ be the fractional part of $6lN'-2S'+J'$, 

\item $T'=6lN'-F'$, 

\item $G'=\rddown{ 2rE'}$, 

\item $Q'$ be the fractional part of $2rE'$, and  

\item $V'=2rR'+Q'$, 

\end{itemize}

Note that $P'$ is supported in $J'$ because $P'$ is also the fractional part of $6lN'$ 
and because any component of $6lN'=6lE'+6lR'$ with non-integral 
coefficient is either a component of $R'$ or an exceptional component of $E'$ as $E$ is integral. 
Moreover, $Q'$ is exceptional over $X$ because $2rE$ is integral, hence $Q'$ is supported in $S'$.\\

\emph{Step 4.}
In this step we show that $F'+G'$ is pseudo-effective and that $T'+V'$ is supported in $J'$ whose non-zero coefficients $\ge 1$.
Let 
$$
A'=N'-(K_{X'}+B')
$$ 
which is pseudo-effective as $N-(K_X+B)$ is pseudo-effective by assumption.
We then have 
$$
N'=K_{X'}+B'+A'=K_{X'}+\Delta'+A'+B'-\Delta'
$$
where $K_{X'}+\Delta'+A'$ is pseudo-effective as $K_{X'}+\Delta'$ is pseudo-effective by Step 2, 
and $B'-\Delta'\ge 0$ with coefficients of exceptional 
components $> \frac{1}{2l}$ again by Step 2. In particular, 
$$
2l(B'-\Delta')-S'\ge 0
$$ 
and so $2lN'-S'$ is pseudo-effective.

Now
$$
F'+G'=\rddown{6lN'-2S'+J'}+\rddown{2rE'}
$$
$$
=6lN'-2S'+J'-P'+2rE'-Q'
$$
$$
=6lN'-3S'+2rE'+J'-P'+S'-Q',
$$
is pseudo-effective because $6lN'-3S'$ is pseudo-effective by the previous paragraph, $2rE'$ is pseudo-effective, and 
$$
J'-P'+S'-Q'\ge 0
$$ 
as $J'$ contains the support of $P'$ and as $S'$ contains the support of $Q'$ 
by Step 3. 

On the other hand,
$$
T'+V'=6lN'-F'+2rR'+Q'
$$
$$
=6lN'-(6lN'-2S'+J'-P')+2rR'+Q'
$$
$$
=2S'-J'+P'+2rR'+Q'
$$
$$
\ge 2S'-J'+2rR'
$$
where the  inequality follows from $P',Q'\ge 0$. In particular, $T'+V'$ is supported in $J'$. 
Moreover,  $S'-J'+rR'\ge 0$ because the non-zero 
coefficients of $S'+rR'$ are $\ge 1$: this is clear for the exceptional components; 
and for the non-exceptional components it follows from $r\delta \ge 1$ and the assumption 
that the non-zero coefficients of $R$ are $\ge \delta$. Therefore, 
$$
T'+V'\ge 2S'-J'+2rR'\ge S'-J'+rR'+S'+rR'\ge S'+rR'\ge J',
$$
so $\Supp(T'+V')=J'$, and the non-zero coefficients of $T'+V'$ are $\ge 1$ by the previous sentence.\\ 

\emph{Step 5.}
In this step we finish the proof by applying \ref{t-eff-bir-general}.
From the equalities in Step 4 we see that 
$$
F'+G'+T'+V' 
$$
$$
=(6lN'-2S'+J'-P'+2rE'-Q')+(2S'-J'+P'+2rR'+Q')
$$
$$
=6lN'+2rE'+2rR'
$$
$$
=(6l+2r)N'.
$$ 
Also recall from Step 2 that $X'$ is $\frac{1}{2l}$-lc. Moreover, $(6l+2r)N'-K_{X'}$ is big 
because $N'-K_{X'}=B'+A'$ is pseudo-effective by the first paragraph of Step 4. 

Therefore, applying Theorem \ref{t-eff-bir-general} to 
$$
{ X', ~~~~~(6l+2r)N'=F'+G'+T'+V'},
$$ 
we deduce that there is a natural number $n$ 
depending only on $d,\frac{1}{2l}$ such that the linear system 
$
|n(6l+2r)N'|
$ 
defines a birational map. In particular, $3dn(6l+2r)N'$ is potentially birational which 
in turn implies that $3dn(6l+2r)N$ is potentially birational.

We will show that 
$$
m:=3dn(6l+2r)+r+2
$$ 
satisfies the theorem.
Let $L$ be any pseudo-effective integral divisor on $X$.
Pick a natural number $m'\ge m$. Let $I$ be the fractional part of $m'N$. Then 
$$
\rddown{m'N+L}=m'N-I+L
$$
$$
=(3dn(6l+2r)+r+2)N+(m'-m)N-I+L
$$
$$
=3dn(6l+2r)N+rE+(m'-m+2)N+L+rR-I
$$
is potentially birational because 
$$
rE+(m'-m+2)N+L
$$ 
is big and $rR-I\ge 0$ as $I$ is supported in $R$, 
 $r\delta \ge 1$, and the non-zero coefficients of $R$ are $\ge \delta$.
Similar reasoning shows that $\rddown{m'N+L-K_{X}}$ is potentially birational where we make use of 
the fact that $N-K_{X}$ is pseudo-effective. Therefore, by [\ref{HMX1}, Lemma 2.3.4],
$$
|\rddown{m'N+L}| ~~~~\mbox{and}~~~~ |K_X+\rddown{m'N+L}|
$$ 
define briational maps which in turn imply that 
$$
|m'N+L| ~~~~\mbox{and}~~~~ |K_{X}+m'N+L|
$$ 
define briational maps. 
Finally note that $m$ depends only on $d,\Phi,\delta$ because 
$n$ depends only on $d,\frac{1}{2l}$ and because $l,r$ depend only 
on $d,\Phi, \delta$.

\end{proof}

\begin{proof}(of Theorem \ref{t-eff-bir-nefbig-klt})
This follows from Theorem \ref{t-eff-bir-nefbig-klt-general}.

\end{proof}


\section{\bf Boundedness of polarised pairs}

In this section we treat boundedness of polarised pairs, namely we prove \ref{t-bnd-nef-pol-pairs}, 
\ref{cor-bnd-cy}, \ref{t-lct-nef-big-slc-cy}, 
and \ref{cor-bnd-semi-lc-pol-CY}. Bounding certain lc thresholds plays a key role in the proofs of all these results.
This bounding is achieved through a combination of birational boundedness of linear systems, birational boundedness of 
pairs, and boundedness of lc thresholds on bounded families.

\subsection{Polarised nef $\epsilon$-lc pairs}
We begin with proving a more general version of Theorem \ref{t-bnd-nef-pol-pairs}.

\begin{thm}\label{t-bnd-nef-pol-pairs-general}
Let $d$ be a natural number and $v,\epsilon,\delta$ be positive real numbers. 
Consider pairs $(X,B)$ and divisors $N$ on $X$ such that  
\begin{itemize}
\item $(X,B)$ is projective $\epsilon$-lc of dimension $d$,

\item the coefficients of $B$ are in $\{0\}\cup [\delta,\infty)$,

\item $K_X+B$ is nef, 

\item $N$ is a nef and big $\R$-divisor, 

\item $N=E+R$ where $E$ is integral and pseudo-effective  and $R\ge 0$ with coefficients in 
$\{0\}\cup [\delta,\infty)$, and 

\item $\vol(K_X+B+N)\le v$.
\end{itemize}
Then the set of such $(X,\Supp B)$ forms a bounded family. 
If in addition $N\ge 0$, then the set of such $(X,\Supp(B+N))$ forms a bounded family.
\end{thm}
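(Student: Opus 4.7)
The plan is to reduce to log birational boundedness of the couples $(X,\Supp(B+N))$ via a uniform birationality bound for a suitable nef and big divisor, and then to pass to genuine boundedness by an ACC-type argument. Taking a small $\Q$-factorialisation, we may assume $X$ is $\Q$-factorial. By Lemma \ref{l-peff-threshold}, there is a natural number $l$ depending only on $d,\epsilon,\delta$ such that $K_X+lN$ is big; enlarging $l$ we assume $l\delta\ge 1$. Set $M:=K_X+B+3lN$. Since $K_X+B$, $N$ are nef and $K_X+lN$ is big, $M$ is nef and big; and because $3l(K_X+B+N)-M=(3l-1)(K_X+B)$ is nef, we have $\vol(M)\le (3l)^dv$.

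The key step is to apply Theorem \ref{t-eff-bir-general} to $X$ with $M$ in place of $N$, yielding a natural number $m$ depending only on $d,\epsilon,\delta$ such that $|mM|$ defines a birational map. All hypotheses are immediate ($M-K_X=B+3lN$ is pseudo-effective) except the decomposition $M=E_M+R_M$. Imitating Step 1 of the proof of \ref{t-eff-bir-general}, let $Q$ be the fractional part of $K_X+2lN$: since $K_X+2lE$ is integral, $Q$ is supported in $\Supp R$; since $lR$ has coefficients $\ge 1$ on $\Supp R$, we have $lR-Q\ge 0$. Thus $\rddown{K_X+2lN}=(K_X+lN)+lE+(lR-Q)$ is big and integral, and setting
$$
E_M:=\rddown{K_X+2lN}+lE,\qquad R_M:=B+Q+lR,
$$
yields $M=E_M+R_M$ with $E_M$ integral and big (hence pseudo-effective), while the nonzero coefficients of $R_M$ are $\ge 1$ on $\Supp R$ (from the $lR$ summand) and $\ge\delta$ on $\Supp B\setminus\Supp R$ (where they equal those of $B$).

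Picking an effective integral divisor $0\le D\sim \rddown{mM}$, and noting $\vol(mM)\le (3ml)^dv$, we apply [\ref{B-compl}, Proposition 4.4] (as in Step 2 of the proof of \ref{p-eff-bir-nefbig-weil-2}, with $\frac{1}{\delta}R+D$ playing the role of the divisor whose components all have coefficient $\ge 1$) to obtain a bounded family of projective log smooth couples $\mathcal P$ and, for each $(X,B,N)$ as in the theorem, a birational map $\overline X\bir X$ from some $(\overline X,\overline\Sigma)\in\mathcal P$ whose reduced total transform contains $\Supp(B+N+D)$; in particular, $(X,\Supp(B+N))$ is log birationally bounded. To upgrade to log boundedness, fix a small rational $t>0$, run the perturbation argument from Step 3 of the proof of \ref{p-eff-bir-nefbig-weil-2} to arrange that $(X,B+tM)$ is $\frac{\epsilon}{2}$-lc, and note that $K_X+B+tM$ is big with $\vol(K_X+B+tM)$ bounded in terms of $d,\epsilon,\delta,v$. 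Applying [\ref{HMX2}, Theorem 1.6] to the lc model of $(X,B+tM)$, as in the proof of Lemma \ref{l-bnd-lc-model-ep-lc}, then gives log boundedness of $(X,\Supp B)$, and of $(X,\Supp(B+N))$ when $N\ge 0$.

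The main obstacle is the decomposition of $M$: the naive choice $E_M=K_X+lE$, $R_M=B+lR$ fails because $K_X+lE$ need not be pseudo-effective, while taking fractional parts to fix integrality destroys the lower bound $\delta$ on the coefficients of $R_M$. The remedy, borrowed from Step 1 of the proof of \ref{t-eff-bir-general}, is to route an extra copy of $lN$ through the integer part: the inequality $l\delta\ge 1$ ensures that $lR$ absorbs the fractional part $Q$ of $K_X+2lN$ on $\Supp R$, which makes $\rddown{K_X+2lN}$ big, and the pieces can then be reassembled so that $E_M$ is integral pseudo-effective and $R_M$ retains the desired $\delta$-bound. Once this decomposition is in hand, the rest is a standard assembly of Theorem \ref{t-eff-bir-general}, [\ref{B-compl}, Proposition 4.4] and [\ref{HMX2}, Theorem 1.6].
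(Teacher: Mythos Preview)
Your overall strategy is close to the paper's, but there is a genuine gap in the final step. You claim that applying [\ref{HMX2}, Theorem 1.6] to the lc model of $(X,B+tM)$ gives log boundedness of $(X,\Supp B)$. However, that theorem bounds the log canonical model, and in your setting $K_X+B+tM=(1+t)(K_X+B)+3ltN$ is only nef and big, not ample: there can be curves $C$ with $(K_X+B)\cdot C=N\cdot C=0$. So the lc model is a nontrivial contraction $X\to Y$, and you have only bounded $(Y,\Supp(B_Y+M_Y))$, not $X$. Your citation of Lemma~\ref{l-bnd-lc-model-ep-lc} does not help, since there $K_X+B$ is assumed ample. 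The paper handles exactly this point in its Step~5: it first proves the theorem under the extra hypothesis that $K_X+B+N$ is ample (which forces their $M$, and hence $K_X+B+\tfrac{t}{2}M$, to be ample, so the lc model is $X$ itself), and then in the general case passes to the contraction $X\to Y$ defined by $K_X+B+N$, observes $K_X+B\sim_\R 0/Y$ and $N\sim_\R 0/Y$, and invokes [\ref{B-lcyf}, Theorem~1.3] to lift boundedness from $Y$ back to $X$. Your argument would work if you inserted the ampleness assumption first (note that $K_X+B+N$ ample does make your $M=K_X+B+3lN=(K_X+B+N)+(3l-1)N$ ample, since ample plus nef is ample) and then added this descent step.

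A secondary point: [\ref{B-compl}, Proposition~4.4] is stated for nef $\Q$-divisors, whereas your $M$ is an $\R$-divisor, so the application is not quite immediate. The paper deals with this by choosing an effective $M$ and then using Lemma~\ref{l-perturbation-R-div} to produce a nearby ample $\Q$-divisor $A$ with $\Supp A=\Supp M$ and $2A\ge M$; this relies on $M$ being ample so that the perturbation $A$ stays ample. You would need an analogous maneuver once you restore the ampleness hypothesis.
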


\begin{proof}
\emph{Step 1.} In this step we will define a divisor $M$ and study some of its properties.
Applying Corollary \ref{cor-eff-bir-nefbig-general} on a small $\Q$-factorialisation of $X$, there exist natural numbers 
$m,l\ge \frac{1}{\delta}$ depending only on $d,\epsilon,\delta$ such that the linear system 
$$
|mK_{X}+lmN+mE|
$$  
defines a birational map. Pick an element 
$L$ 
of this linear system and then define 
$$
M:=mB+mR+L
$$ 
which is effective.

 From here to the end of Step 4 we assume that $K_X+B+N$ is ample. 
Then 
$$
M\sim mB+mR+mK_{X}+lmN+mE=m(K_X+B+N)+lmN
$$
is ample, and $|M|$ defines a birational map as $M\ge L$. Moreover, for any component $D$ of $M$, 
we have 
$$
\mu_D(B+M)\ge \mu_D M\ge 1;
$$ 
indeed, if $D$ is not a component of the fractional part of $M$, then 
obviously $\mu_DM\ge 1$; if $D$ is a component of the fractional part of $M$, then $D$ is a component of 
$B+R$ and we have 
$$
\mu_D M\ge \mu_D(mB+mR)\ge 1
$$ 
as $m\delta\ge 1$ and as the non-zero coefficients of $B+R$ are $\ge \delta$.
In addition, 
$$
\vol(M)=\vol(m(K_X+B+N)+lmN)
$$
$$
\le \vol(m(K_X+B+N)+lm(K_X+B)+lmN)
$$
$$
= \vol((l+1)m(K_X+B+N))\le ((l+1)m)^{d}v.
$$
Also it is clear from the definition of $M$ that $M-(K_X+B)$ is big.\\

\emph{Step 2.}
In this step we show that $(X,\Supp (B+M))$ is birationally bounded. 
By Lemma \ref{l-perturbation-R-div}, there is a $\Q$-Cartier 
$\Q$-divisor $A$ such that $\Supp A=\Supp M$ and $M-A$ has arbitrarily small coefficients.  
In particular, $A\ge 0$ as $M\ge 0$, and we can assume that $A$ is ample as $M$ is ample, and that 
$$
\vol(A)\le ((l+1)m)^{d}v+1.
$$  
Moreover, we can assume that  
$2A\ge M$ which in particular means that $|2A|$ defines a birational map and 
the coefficients of $2A$ are $\ge 1$ by Step 1. In addition, $2A-(K_X+B)$ is big as $M-(K_X+B)$ is big. 
Also note that for any component $D$ of $2A$ we have 
$$
\mu_D(B+2A)\ge \mu_D(B+M)\ge 1.
$$

Now applying [\ref{B-compl}, Proposition 4.4] to $(X,B),2A$, we deduce that 
there exist a bounded set of couples $\mathcal{P}$ and a natural number $c$ 
depending only on $d,v,l,m,\delta$ such that 
we can find a projective log smooth couple
 $({\overline{X}},\overline{{\Sigma}})\in\mathcal{P}$ and a birational map $\overline{X}\bir X$ such that 
\begin{itemize}
\item  $\Supp \overline{{\Sigma}}$ contains the exceptional 
divisors of  $\overline{X}\bir X$ and the birational transform of $\Supp (B+2A)=\Supp (B+M)$;

\item if $\phi\colon X'\to X$ and $\psi\colon X'\to \overline{X}$ is a common resolution, 
then each coefficient of $\overline{A}:=\psi_*\phi^*A$ is at most $c$.\\
\end{itemize}

\emph{Step 3.}
Next we show that the lc threshold $t$ of $M$ with respect to $(X,B)$ is bounded 
from below away from zero. Let  
$$
K_{\overline{X}}+{\overline{B}}=\psi_*\phi^*(K_X+B).
$$ 
By definition of $t$, $(X,B+tM)$ is not klt, hence $(X,B+2tA)$ is also not klt as $2A\ge M$.
Then, since $K_X+B+2tA$ is ample, by the negativity lemma 
$$
\phi^*(K_X+B+2tA)\le \psi^*(K_{\overline{X}}+{\overline{B}}+2t\overline{A}),
$$
which implies that $({\overline{X}},\overline{{B}}+2t\overline{{A}})$ is not sub-klt. 
Thus 
$$
({\overline{X}},(1-\epsilon)\overline{{\Sigma}}+2t\overline{{A}})
$$ 
is not sub-klt 
as $\overline{{B}}\le (1-\epsilon)\overline{{\Sigma}}$ because $(X,B)$ is $\epsilon$-lc. 
Then since the above pair is log smooth, $t$ is bounded from below away from zero depending only on 
$\epsilon,c$. Then $t$ depends only on $d,v,l,m,\epsilon,\delta$, so we can assume 
it depends only on $d,v,\epsilon,\delta$.
Also note that $t\le 1$ because as noted above each component $D$ of $M$ 
satisfies $\mu_D M\ge 1$.\\

\emph{Step 4.}
In this step we show that $(X,\Supp (B+M))$ is bounded.
By the previous step, $(X,B+\frac{t}{2}M)$ is $\frac{\epsilon}{2}$-lc as $(X,B)$ is $\epsilon$-lc. Moreover, 
$K_X+B+\frac{t}{2}M$ is ample and the non-zero coefficients of $B+\frac{t}{2}M$ are $\ge \min\{\delta,\frac{t}{2}\}$. 
And by Step 2, $(X,\Supp (B+M))$ is birationally bounded.
Therefore, applying [\ref{HMX2}, Theorem 1.6], we deduce that $(X,\Supp (B+M))$ is bounded.

Now assume $N\ge 0$. By adding a multiple of $N$ to $M$ in Step 1, say by replacing $l$ with $l+1$, 
we can assume $M\ge N$. Thus boundedness of  
$(X,\Supp (B+M))$ implies boundedness of $(X,\Supp (B+N))$.
\\

\emph{Step 5.}
Now we treat the general case when $K_X+B+N$ is only nef and big.
Since $(X,B)$ is klt, $K_X+B$ is nef, and $N$ is nef and big, we see that $K_X+B+N$ is nef and big and semi-ample 
by the base point freeness theorem, 
hence it defines a birational contraction $X\to Y$. Then $K_X+B\equiv 0/Y$ and $N\equiv 0/Y$ as 
both $K_X+B,N$ are nef, hence by again applying 
the base point freeness theorem to $K_X+B$ over $Y$ we have $K_X+B\sim_\R 0/Y$ and $N\sim_\R 0/Y$ 
which in turn implies $M\sim_\R 0/Y$. 
By the above steps, $(Y,\Supp (B_Y+M_Y))$ is bounded where $B_Y,M_Y$ are pushdowns of $B,M$.

By construction, 
$
(X,B+\frac{t}{2}M)
$
is a crepant model of $(Y,B_Y+\frac{t}{2}M_Y)$. In the terminology of [\ref{B-lcyf}],
$$
(X,B+\frac{t}{2}M)\to Y
$$ 
is a $(d,r,\epsilon)$-Fano type log Calabi-Yau fibration for some fixed $r\in \N$. 
Therefore, applying [\ref{B-lcyf}, Theorem 1.3] we deduce that $(X,\Supp (B+M))$ is bounded.
 In particular, $(X,\Supp (B+N))$ is bounded if $N\ge 0$ 
as we can assume $M\ge N$ as in Step 4. 

\end{proof}

It is worth pointing out that we used [\ref{B-lcyf}, Theorem 1.3] in the proof but only when $K_X+B+N$ 
is not ample.

\begin{proof}(of Theorem \ref{t-bnd-nef-pol-pairs})
This is a special case of Theorem \ref{t-bnd-nef-pol-pairs-general}.

\end{proof}

\begin{proof}(of Corollary \ref{cor-bnd-cy})
Since $(X,B)$ is klt and $K_X+B\equiv 0$, by [\ref{B-compl}, Lemma 2.48], 
there is a positive real number $\epsilon$ depending only on 
$d,\Phi$ such that $(X,B)$ has $\epsilon$-lc 
singularities (note that the lemma requires $(X,0)$ to be klt but we can achieve this on a small 
$\Q$-factorialisation of $X$). Moreover, 
$$
\vol(K_X+B+N)=\vol(N)\le v.
$$ 
Now apply Theorem \ref{t-bnd-nef-pol-pairs}.

\end{proof}

\subsection{Lc thresholds on slc Calabi-Yau pairs}
Next we prove a more general version of Theorem \ref{t-lct-nef-big-slc-cy}.

\begin{thm}\label{t-lct-nef-big-slc-cy-general}
Let $d$ be a natural number, $v,\delta$ be positive real numbers, and $\Phi\subset [0,1]$ be a DCC set of real numbers. 
Then there is a positive real number $t$ depending only on $d,v,\delta,\Phi$ satisfying the following.
Assume that   
\begin{itemize}
\item $(X,B)$ is an slc Calabi-Yau pair of dimension $d$, 

\item the coefficients of $B$ are in $\Phi$,

\item $N\ge 0$ is a nef $\R$-divisor on $X$ with coefficients $\ge \delta$,

\item $(X,B+uN)$ is slc for some real number $u>0$,

\item for each irreducible component $S$ of $X$, $N|_S$ is big and $\vol(N|_S)\le v$.
\end{itemize}
Then $(X,B+tN)$ is slc. 
\end{thm}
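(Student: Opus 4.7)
The plan is to argue by contradiction. Suppose the conclusion fails; then there exist sequences $(X_i,B_i), N_i, u_i$ as in the statement for which the lc thresholds $t_i$ of $N_i$ with respect to $(X_i,B_i)$ satisfy $t_i\to 0$. First I would pass to normalisations $\pi_i\colon Y_i\to X_i$, writing $K_{Y_i}+C_i=\pi_i^*(K_{X_i}+B_i)$ with $C_i$ the sum of the birational transform of $B_i$ and the conductor, and setting $N_{Y_i}:=\pi_i^*N_i$. Then $(Y_i,C_i)$ is lc with $K_{Y_i}+C_i\sim_\R 0$, the coefficients of $C_i$ lie in $\Phi\cup\{1\}$, and the non-zero coefficients of $N_{Y_i}$ remain $\ge\delta$ since no component of $\Supp N_i$ sits in the singular locus of $X_i$. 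Restricting to a single irreducible component reduces us to the case where $Y_i$ is normal and irreducible with $N_{Y_i}$ nef, big, $\vol(N_{Y_i})\le v$, and $(Y_i,C_i+u_iN_{Y_i})$ lc.

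Next I would take a $\Q$-factorial dlt modification $(Y_i',C_i')\to (Y_i,C_i)$ and pull $N_{Y_i}$ back to $N_{Y_i'}$; it then suffices to bound the lc threshold of $N_{Y_i'}$ against $(Y_i',C_i')$ from below, uniformly in $i$. The key structural observation is that the hypothesis $(Y_i,C_i+u_iN_{Y_i})$ lc forces $\mult_E N_{Y_i'}=0$ for every lc place $E$ of $(Y_i',C_i')$: indeed $a(E,Y_i',C_i'+u_iN_{Y_i'})=-u_i\mult_E N_{Y_i'}\ge 0$ combined with $u_i>0$ and $N_{Y_i'}\ge 0$ forces the vanishing. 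In particular no lc centre of $(Y_i',C_i')$ lies inside $\Supp N_{Y_i'}$, so the failure of the lc condition for $(Y_i',C_i'+tN_{Y_i'})$ as $t$ grows can only be caused by strictly klt places of $(Y_i',C_i')$. This is precisely the mechanism making the bound $t$ independent of $u$.

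If $\lfloor C_i'\rfloor=0$, so $(Y_i',C_i')$ is klt, the global ACC [\ref{HMX2}, Theorem 1.5] together with [\ref{B-compl}, Lemma 2.48] give a uniform $\epsilon=\epsilon(d,\Phi)>0$ making $(Y_i',C_i')$ actually $\epsilon$-lc with coefficients of $C_i'$ in a fixed finite subset of $\Phi\cup\{1\}$. Since $K_{Y_i'}+C_i'\sim_\R 0$, we have $\vol(K_{Y_i'}+C_i'+N_{Y_i'})=\vol(N_{Y_i'})\le v$, so Theorem \ref{t-bnd-nef-pol-pairs-general} (taking $E=0$, $R=N_{Y_i'}$) implies that the couples $(Y_i',\Supp(C_i'+N_{Y_i'}))$ form a bounded family. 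Applying ACC for lc thresholds [\ref{HMX2}, Theorem 1.1] across this bounded family, with the fixed finite set of coefficients of $C_i'$ and the lower bound $\delta$ on those of $N_{Y_i'}$, produces a uniform $t_0>0$ such that $(Y_i',C_i'+t_0N_{Y_i'})$ is lc, contradicting $t_i\to 0$.

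When $\lfloor C_i'\rfloor\neq 0$ I would run an induction on $d$ via adjunction to a component $S\subseteq\lfloor C_i'\rfloor$. Divisorial adjunction produces an slc Calabi-Yau pair $(S^\nu,C_{S^\nu})$ of dimension $d-1$ with coefficients in a DCC set depending only on $d$ and $\Phi$, and since $S$ is not contained in $\Supp N_{Y_i'}$ by the structural point above, $N_{Y_i'}|_{S^\nu}$ is a well-defined nef $\R$-divisor whose non-zero coefficients are still $\ge\delta$. The main obstacle will be verifying bigness of $N_{Y_i'}|_{S^\nu}$ on each irreducible component together with a uniform bound $v'=v'(d,v)$ on $\vol(N_{Y_i'}|_{S^\nu})$; I expect this to follow from a combination of Lemma \ref{l-restriction-big-divisors} on a suitable resolution of $(Y_i',C_i'+S)$, the coefficient-descent Proposition \ref{p-non-klt-centres-restriction-weil-div}, and an intersection-theoretic bound governed by $\vol(N_{Y_i'})\le v$. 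Once the inductive hypothesis in dimension $d-1$ yields a uniform $t'>0$ making $(S^\nu,C_{S^\nu}+t'N_{Y_i'}|_{S^\nu})$ slc, inversion of adjunction applied to those lc places of $(Y_i',C_i'+t'N_{Y_i'})$ with centre on $S$, combined with the vanishing $\mult_E N_{Y_i'}=0$ along the lc places of $(Y_i',C_i')$ itself, promotes this to a uniform lower bound on the lc threshold of $N_{Y_i'}$ over $(Y_i',C_i')$, delivering the contradiction.
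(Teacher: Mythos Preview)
Your reduction to the normal irreducible case and to a $\Q$-factorial dlt model matches the paper, and your klt branch is essentially fine: once $(Y_i',C_i')$ is $\epsilon$-lc with coefficients in a fixed finite set, Theorem \ref{t-bnd-nef-pol-pairs-general} plus a bounded-family lc threshold argument does the job. The paper takes a slightly different route in this regime (direct birational boundedness via Corollary \ref{cor-eff-bir-nefbig-general} and [\ref{B-compl}, Proposition 4.4] rather than invoking \ref{t-bnd-nef-pol-pairs-general}), but either works.

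The genuine gap is your non-klt branch. Inducting by adjunction to a component $S\subset\lfloor C_i'\rfloor$ requires exactly the hypotheses of the theorem one dimension down, and you do not have them. First, there is no reason for $N_{Y_i'}|_{S^\nu}$ to be big on each irreducible component: $N_{Y_i'}$ is nef and big on $Y_i'$, but its restriction to a fixed lc centre can easily fail bigness (e.g.\ if $S$ is contracted by the semi-ample fibration of $N_{Y_i'}$). Lemma \ref{l-restriction-big-divisors} concerns \emph{general} fibres of a morphism, not restriction to a fixed divisor, so it does not help. Second, even when bigness holds you have no mechanism to bound $\vol(N_{Y_i'}|_{S^\nu})$ in terms of $v$. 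Third, Proposition \ref{p-non-klt-centres-restriction-weil-div} needs $G$ to be a general member of a covering family with a unique non-klt place and an $\epsilon$-lc adjunction output; a fixed component of $\lfloor C_i'\rfloor$ satisfies none of this, so your claim that the coefficients of $N_{Y_i'}|_{S^\nu}$ stay $\ge\delta$ is unsupported. Finally, inversion of adjunction only controls singularities near $S$; other lc centres disjoint from $S$ are untouched, so the promotion step is incomplete.

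The paper sidesteps the case-split entirely. After the dlt model it uses the $\epsilon$-gap from [\ref{B-compl}, Lemma 2.48]: any prime divisor $D$ over $X$ with $a(D,X,B)<\epsilon$ actually has $a(D,X,B)=0$. One then extracts all $D$ with $a(D,X,0)<\epsilon$, so that $(X,0)$ becomes $\epsilon$-lc while every exceptional divisor has coefficient $1$ in $B$ and (crucially, by your own observation) multiplicity $0$ in the pullback of $N$. Now a single birational-boundedness argument applies: on the bounded log smooth model $\overline{X}$, the pushforward $\overline{N}$ has coefficients $\le c$, and every component of $\overline{N}$ has coefficient $\le 1-\epsilon$ in $\overline{B}$ because the interval $(1-\epsilon,1)$ is forbidden. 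Setting $t=\epsilon/c$ makes $(\overline{X},\overline{B}+t\overline{N})$ sub-lc, and nefness of $K_X+B+tN$ plus negativity finishes. The point is that the $\epsilon$-gap handles the lc places uniformly without any descent to lower dimension.
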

\begin{proof}
\emph{Step 1.}
In this step we reduce the theorem to the case when $X$ is normal and irreducible. 
Let $X^{\nu}\to X$ be the normalisation of $X$ and let $K_{X^{\nu}}+B^{\nu}$ and $N^{\nu}$ be the 
pullbacks of $K_X+B$ and $N$. Since $K_X+B\sim_\R 0$, we get $K_{X^{\nu}}+B^{\nu} \sim_\R 0$. 
Recall from \ref{ss-slc-pairs} that 
$B^\nu$ is the sum of the birational transform of $B$ and the reduced conductor divisor of $X^{\nu}\to X$.
So the coefficients of $B^\nu$ belong to $\Phi\cup \{1\}$. Replacing $\Phi$ with $\Phi\cup \{1\}$ we can 
assume these coefficients are in $\Phi$. On the other hand, 
since $(X,B+uN)$ is slc for some $u>0$, $\Supp N$ does not contain any 
singular codimension one point of $X$. Thus 
$N^\nu$ is the birational transform of $N$ and the coefficients of $N^\nu$ are $\ge \delta$. 

By definition of slc pairs, for any real number $t\ge 0$, $(X,B+tN)$ is slc iff 
$({X^{\nu}},B^{\nu}+tN^{\nu})$ is lc on each irreducible component of $X^\nu$. 
By assumption, $N^{\nu}$ is nef and big on each irreducible 
component of $X^{\nu}$ with volume at most $v$. Therefore, replacing $(X,B),N$ with the 
restriction of $({X^{\nu}},B^{\nu}),N^{\nu}$ to an arbitrary irreducible component of 
$X^\nu$ we can assume $X$ is normal and irreducible. 
In particular, $(X,B+uN)$ is an lc pair for some $u>0$.\\

\emph{Step 2.}
In this step we reduce the theorem to the case when $X$ has $\epsilon$-lc singularities for some 
fixed $\epsilon>0$ depending only on $d,\Phi$.
Let $(X',B')$ be a $\Q$-factorial dlt model of $(X,B)$ 
and let $N'$ be the pullback of $N$. By definition of dlt models, each exceptional$/X$ 
prime divisor on $X'$ appears in $B'$ with coefficient $1$.   
By assumption, $(X,B+uN)$ is lc for some $u>0$,
hence $(X',B'+uN')$ is lc, so $\Supp N'$ cannot contain any exceptional divisor which means   
$N'$ is just the birational transform of $N$ so its coefficients are $\ge \delta$. 
Replacing $(X,B),N$ with $(X',B'),N'$ we can assume that $(X,0)$ is $\Q$-factorial klt. 

On the other hand, since $K_{X}+B\sim_\R 0$ and since the coefficients of 
$B$ are in the DCC set $\Phi$, there is a positive real number $\epsilon$ depending only on $d,\Phi$ such that 
if $D$ is a prime divisor over $X$ with log discrepancy $a(D,X,B)<\epsilon$, then $a(D,X,B)=0$ 
[\ref{B-compl}, Lemma 2.48]. In particular, if $D$ is a prime divisor over $X$ with 
$a(D,X,0)<\epsilon$, then $a(D,X,B)=0$.

Assume $X''\to X$ extracts exactly all the prime divisors $D$ over $X$ with $a(D,X,0)<\epsilon$. 
If there is no such $D$, then $X''\to X$ is the identity morphism.
Let $K_{X''}+B''$ be the pullback of $K_X+B$. Then each exceptional prime divisor of $X''\to X$ 
has coefficient $1$ in $B''$ by the previous paragraph. In particular, the  coefficients of $B''$ are in $\Phi$, and 
$\Supp N$ does not contain the image of 
such divisors on $X$. Thus if $N''$ is the pullback of $N$, then $N''$ is just 
the birational transform of $N$, so its coefficients are $\ge \delta$.
In addition, by construction, $(X'',0)$ has $\epsilon$-lc singularities because for any prime divisor $C$ 
on birational models of $X''$ and exceptional over $X''$, we have 
$$
a(C,X'',0)\ge a(C,X,0)\ge \epsilon
$$ 
by our choice of $X''$.
Replacing $(X,B),N$ with $(X'',B''),N''$ we can then assume that $(X,0)$ has $\epsilon$-lc singularities.\\

\emph{Step 4.}
In this step we show that $(X,\Supp (B+N))$ is birationally bounded. 
We want to apply  [\ref{B-compl}, Proposition 4.4] but since this is stated only for 
nef $\Q$-divisors, we need to apply it indirectly as follows. 
Since $X$ is $\epsilon$-lc and $N$ is nef and big with coefficients $\ge \delta$, 
by Corollary \ref{cor-eff-bir-nefbig-general}, there exist natural numbers $m,l$ 
depending only on $d,\epsilon,\delta$ such that $|mK_X+lN|$ defines a birational map. 
 Pick an element $L$ of this linear system. 
Replacing $l$ we can assume that $L\ge N+\Supp N$.   
Let $M:=m\Delta+L$ where $0\le \Delta\le N$ is a small $\R$-divisor so that 
$M$ is a $\Q$-divisor and $(X,\Delta)$ is $\frac{\epsilon}{2}$-lc.

Considering $(X,\Delta+\frac{l}{m}N)$ as a generalised pair with nef part $\frac{l}{m}N$, 
running an MMP on 
$$
K_X+\Delta+\frac{l}{m}N\sim_\Q \frac{1}{m}M
$$ 
ends with a minimal model, say $Y$.
 Thus $M_Y$ is a nef and big $\Q$-divisor. Moreover, 
 $$
 M_Y-(K_Y+B_Y)\equiv M_Y
 $$ is big.  
In addition, 
$$
\vol(M_Y)=\vol(M)=\vol(mK_X+m\Delta+lN)=\vol(-mB+m\Delta+lN)
$$
$$
\le \vol((m+l)N)\le (m+l)^dv.
$$
On the other hand, for any component $D$ of $M_Y$, 
$$
\mu_D(B_Y+M_Y)\ge \mu_DM_Y\ge 1;
$$ 
indeed, if $D$ is a not a component of the fractional part of $M_Y$, this is obvious; otherwise 
$D$ is a component of $N_Y$ in which case the assertion follows from $M_Y\ge L_Y\ge  \Supp N_Y$.

Therefore, applying [\ref{B-compl}, Proposition 4.4] to $(Y,B_Y),M_Y$,  
there is a positive real number $c$ and a bounded set of couples $\mathcal{P}$ depending 
only on $d,v,m,l,\Phi$ such that there is a projective log smooth couple $(\overline{X},{\overline{\Sigma}})\in \mathcal{P}$ 
and a birational map $\overline{X}\bir Y$ such that 
\begin{itemize}
\item  $\Supp {\overline{\Sigma}}$ contains the exceptional 
divisor of  $\overline{X}\bir Y$ and the birational transform of $\Supp (B_Y+{M_Y})$;

\item if $\rho\colon X'\to Y$ and $\psi\colon X'\to \overline{X}$ is a common resolution and 
$\overline{M}=\psi_*\rho^*M_Y$, then each coefficient of $\overline{M}$ 
is at most $c$.

\end{itemize}

Now ${\overline{\Sigma}}$ contains the exceptional divisors of the induced map ${\overline{X}}\bir X$ 
and the birational transform of $\Supp (B+{N})$ because any component of the latter is either 
exceptional over $Y$ or is the birational transform of some component of $B_Y+M_Y$. 
Moreover, replacing $X'$ we can assume 
$\phi\colon X'\to X$ is also a resolution. But then since $N$ is nef, 
$$
\overline{N}:=\psi_*\phi^*N\le \psi_*\rho^*N_Y\le \psi_*\rho^*M_Y=\overline{M},
$$
hence $\overline{N}$ is supported in $\overline{\Sigma}$ with coefficients $\le c$.\\

\emph{Step 5.}
In this step we finish the proof. 
Let 
$$
K_{X'}+B'=\phi^*(K_X+B), ~~~ N'=\phi^*N,
$$ 
and let 
$$
K_{\overline{X}}+{\overline{B}}=\psi_*\phi^*(K_X+B).
$$ 
Then $(X',B')$ is sub-lc and $(\overline{X},{\overline{B}})$ is also sub-lc as 
$\Supp \overline{B}\subset \overline{\Sigma}$.  
Since $\Supp N$ does not contain any non-klt centre of $(X,B)$, $\Supp N'$ does not contain any non-klt centre 
of $(X',B')$, hence no component of $N'$ has coefficient $1$ in $B'$. Thus 
 no component of $\overline{N}$ has coefficient $1$ in $\overline{B}$. 
 
On the other hand, by Step 2, no component of $\overline{B}$ has coefficient in 
$(1-\epsilon,1)$ otherwise we would find a prime divisor $D$ over $X$ with 
$$
0<a(D,X,B)<\epsilon
$$ 
which is not possible by our choice of $\epsilon$. 
Thus every component of $\overline{N}$ has coefficient $\le 1-\epsilon$ in $\overline{B}$.  
Also by the previous step 
the coefficients of $\overline{N}$ are at most $c$. 
Therefore, letting $t=\frac{\epsilon}{c}$ we see that 
the coefficients of ${\overline{B}}+t\overline{N}$ do not exceed $1$ because for any prime divisor $D$ 
either $\mu_D\overline{B}=1$ and $\mu_Dt\overline{N}=0$, or $\mu_D\overline{B}\le 1-\epsilon$ 
and $\mu_Dt\overline{N}\le \epsilon$. 
Moreover, since $\Supp {\overline{\Sigma}}$ contains $\Supp {\overline{B}}\cup \Supp \overline{N}$, 
$$
(\overline{X},\Supp {\overline{B}}\cup \Supp \overline{N}))
$$ 
is log smooth. 
Therefore, $(\overline{X},{\overline{B}}+t\overline{N})$ is sub-lc. 

Now since $K_X+B+tN$ is nef, by the negativity lemma, we have 
$$
\phi^*(K_X+B+tN)\le \psi^*(K_{\overline{X}}+{\overline{B}}+t\overline{N}),
$$
hence we deduce that $(X,B+tN)$ is lc. 
Note that we can assume that $t$ 
depends only on $d,v,\delta,\Phi$ because $\epsilon$ depends only on $d,\Phi$, 
and $m,l$ depend only on $d,\epsilon,\delta$, and $c$ depends only on 
$d,v,m,l,\Phi$.

\end{proof}

\begin{proof}(of Theorem \ref{t-lct-nef-big-slc-cy})
This is a special case of Theorem \ref{t-lct-nef-big-slc-cy-general}.

\end{proof}

\subsection{Polarised slc Calabi-Yau pairs}

\begin{proof}(of Corollary \ref{cor-bnd-semi-lc-pol-CY})
If $X_i$ are the irreducible components of $X$, then $\vol(N)=\sum \vol(N|_{X_i})$, hence 
$\vol(N|_{X_i})\le v$ for each $i$. Thus 
by Theorem \ref{t-lct-nef-big-slc-cy}, there is a rational number $t>0$ depending only on $d,v,\Phi$ such that 
$(X,B+tN)$ is slc. Since the coefficients of $B$ are in the DCC set $\Phi$ and since $N\ge 0$ is integral, 
the coefficients of $B+tN$ belong to a DCC set depending only on $\Phi,t$. Moreover, $K_X+B+tN$ is ample with  
$$
\vol(K_X+B+tN)=\vol(tN)=t^dv.
$$ 
Therefore, we can apply [\ref{HMX3}, Theorem 1.1] to deduce that $(X,\Supp (B+tN))$ belongs to a bounded family.

\end{proof}


\section{\bf Further remarks}\label{s-remarks}

In this final section we present some examples and remarks related to some of the results in this paper.

\begin{exa}\label{exa-2}
\emph{ 
This example shows that we cannot drop the condition $N-K_X$ being pseudo-effective 
in Theorem \ref{t-eff-bir-nefbig-weil} in general. 
Assume $X$ is a smooth projective curve and $N$ is one point on $X$. 
Let $m$  be the smallest natural number such that $|mN|$ defines a birational map. 
In general $m$ is not bounded. Indeed, assume not, that is, assume $m$ is 
bounded from above. Then $\vol(mN)=m$ is bounded, hence $X$ is 
birationally bounded [\ref{HMX1}, Lemma 2.4.2(2)] 
which implies that $X$ is bounded, a contradiction as $X$ is an arbitrary smooth projective curve. 
}
\end{exa}

\begin{exa}\label{exa-3}
\emph{
This example shows that the condition   
$X$ having $\epsilon$-lc singularities in Theorem \ref{t-eff-bir-nefbig-weil} cannot be replaced with just 
assuming $X$ having klt singularities.
Let $X$ be the weighted projective surface $\PP(p,q,r)$ where $p,q,r$ are coprime natural numbers. 
Then $X$ is a toric Fano surface with klt singularities. Let $N=-K_X$. Then  
$$
\vol(N)=\frac{(p+q+r)^2}{pqr}
$$
can get arbitrarily small meaning that there is no positive lower bound on $\vol(N)$ [\ref{HMX2}, Example 2.1.1]. 
Thus if $m$ is a natural number such that $|mN|$ defines a birational map, then 
there is no upper bound on $m$ because $\vol(N)\ge \frac{1}{m^2}$.
}
\end{exa}

\begin{exa}\label{exa-3'}
\emph{
This example shows that we cannot drop the nefness of $N$ in Theorem \ref{t-eff-bir-nefbig-weil}.
Let $X$ be as in Example \ref{exa-3} and $N=-K_X$. Take the minimal resolution $\phi\colon W\to X$ and write 
$K_W+E=\phi^*K_X$. 
Let $N_W=\lceil \phi^*N\rceil$. Then $N_W=\phi^*N+G$ for some $G\ge 0$. 
By construction $W$ is smooth, $N_W$ is integral and big, 
and 
$$
N_W-K_W=\phi^*N+G -K_W
$$
$$
=  \phi^*N+G -\phi^*K_X+E=2\phi^*N+G+E
$$
is big. If $|mN_W|$ defines a birational map, then $|mN|$ also defines a birational map. 
But it was noted in \ref{exa-3} that $m$ depends on $X$ and  in general not bounded.
}
\end{exa}

\begin{rem}\label{rem-3''}
\emph{If in addition to the assumptions of Corollary \ref{cor-eff-bir-nefbig-weil} we assume that 
the Cartier index of $N$ is bounded by some fixed number $p$, then the corollary essentially follows from 
the results of [\ref{BZh}]. Indeed in this case we can find a bounded natural number $l$ 
such that $K_X+lN$ is big, so we can apply [\ref{BZh}, Theorem 1.3] to deduce that 
$|m(K_X+lN)|$ defines a birational map for some bounded natural number $m$. In this case we do not need 
the $\epsilon$-lc condition on $X$.
}
\end{rem}

\begin{exa}\label{exa-4}
\emph{
 It is tempting to try to generalise Theorem \ref{t-eff-bir-nefbig-weil} to the case when $N$ is only 
nef but not necessarily big. More precisely, assume $X$ is a projective variety with $\epsilon$-lc 
singularity with $\epsilon>0$, of dimension $d$, and $N$ is a nef integral divisor with $N-K_X$ big. Then 
one may ask whether there is $m$ depending only on $\epsilon,d$ such that $|mN|$ defines the Iitaka fibration 
associated to $N$. Such an $m$ does not exist as the following example in dimension two shows. 
Assume $Y$ is the projective cone over an elliptic curve 
$T$ defined over $\C$, and $X$ is obtained by blowing up the vertex. Then $X$ is smooth and $Y$ has an  
lc (non-klt) singularity at the vertex but smooth elsewhere. 
Then taking $a>0$ small, $-(K_X+(1+a)T)$ is positive on both extremal rays of $X$, so it is ample. 
Thus $-K_X$ is big.} 

\emph{
Pick a natural number $n$ and pick a 
torsion Cartier divisor $G$ on $T$ such that $nG\sim 0$ but $n'G\not\sim 0$ for any natural number $n'<n$ 
(we find such a $G$ by viewing $T$ as $\C/\Lambda$ for some lattice $\Lambda$). 
Let $N$ be the pullback of $G$ via $X\to T$. 
Then $|n'N|$ is empty for every natural number $n'<n$ although $N-K_X$ is big. 
Since $n$ can be arbitrarily large, there is no bounded $m$ independent of the choice of $N$ 
such that $|mN|$ defines the Iitaka fibration of $N$. 
}
\end{exa}

If we restrict ourselves to Fano type varieties, then at least conjecturally we expect much better 
behaviour with respect to the above problem. 

\begin{conj}\label{conj-nef-integral-FT}
Let $d$ be a natural number and $\epsilon$ be a positive real number. Then 
there is a natural number $m$ depending only on $d,\epsilon$ satisfying the following. 
Assume that 
\begin{itemize}
\item $X$ is an $\epsilon$-lc projective variety of dimension $d$,

\item $X\to Z$ is a contraction,

\item $X$ is of Fano type over $Z$, and 

\item $N$ is an integral divisor which is nef over $Z$.

\end{itemize}

Let $f\colon X\to V/Z$ be the contraction defined by $N$. Then there exists an integral divisor $L$ on $V$ 
such that 
\begin{itemize}
\item $mN\sim f^*L$, and  
\item $|mL|_G|$ defines a birational map where $G$ is the generic fibre of $V\to Z$.
\end{itemize}

\end{conj}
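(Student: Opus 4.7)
The plan is to produce $L$ on $V$ and bound $m$ in two stages: first produce $L$ as a $\Q$-Cartier divisor using semi-ampleness, and then control the multiple $m$ via complement theory and the birational boundedness established earlier in the paper.

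After a small $\Q$-factorialisation I may assume $X$ is $\Q$-factorial; by the Fano type hypothesis I fix a boundary $B\ge 0$ with $(X,B)$ klt and $-(K_X+B)$ ample over $Z$. The base-point-free theorem applied to the log Fano pair $(X,B)/Z$ and the nef divisor $N$ yields semi-ampleness of $N$ over $Z$, so $N$ defines a contraction $f\colon X\to V/Z$ with $N\sim_\Q f^*\tilde L$ for an ample $\Q$-divisor $\tilde L$ on $V$. Since the generic fibre of $f$ is of Fano type and $N$ is numerically trivial along it, $N\sim_\Q 0/V$, and one obtains $L_\Q$ on $V$ with $N\sim_\Q f^*L_\Q$. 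This is the easy step.

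Next, applying the canonical bundle formula to $f\colon(X,B)\to V$ produces a generalised pair $(V,B_V+M_V)/Z$ which is of generalised Fano type and, by the adjunction theory developed in Section~3, generalised klt. Two uniform pieces of information are needed at this point: a bound on the Cartier index of $L_\Q$ so that some $mL_\Q$ becomes integral (and $mN\sim f^*mL$ holds), and a lower bound on the generalised singularities of $(V,B_V+M_V)$ so as to recover an $\epsilon'$-lc-type hypothesis for $V$ in terms of $d,\epsilon$. Both should be accessible via the theory of boundedness of complements for generalised Fano type pairs in the spirit of [\ref{B-compl}], combined with Lemma~\ref{l-peff-threshold} to bound the pseudo-effective threshold of $L_\Q$ relative to $K_V+B_V+M_V$.

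With such uniform data in place, birationality of $|mL|_G|$ on the generic fibre $G$ of $V\to Z$ should follow by reduction to Theorem~\ref{t-eff-bir-general}, or rather to its generalised-pair analogue: on $G$, $L|_G$ is ample integral with $\dim G\le d$, the restriction $(G,B_G+M_G)$ is generalised $\epsilon'$-lc, and $L|_G-(K_G+B_G+M_G)$ is big because $-(K_V+B_V+M_V)$ is ample over $Z$. The main obstacle will be the uniform control of singularities after the canonical bundle formula: this does not automatically preserve the $\epsilon$-lc property, so extracting a uniform $\epsilon'>0$ depending only on $d,\epsilon$ for $(V,B_V+M_V)$ is delicate and most likely requires a boundedness statement for the moduli $b$-divisor in the canonical bundle formula. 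An alternative I would run in parallel is an inductive approach on $\dim X$: apply the induction hypothesis in dimension $<d$ directly to a Fano-type structure on $X_\eta/G_\eta$, reducing the whole conjecture to the case where $f$ is either birational or of relative Picard number one, which can then be addressed by the existing complement and effective freeness machinery.
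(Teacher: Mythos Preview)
The statement you are trying to prove is explicitly a \emph{conjecture} in the paper, not a theorem; the paper offers no proof. What the paper does provide, in the paragraph immediately following the statement, is an argument that the conjecture is \emph{equivalent} (modulo Theorem~\ref{t-eff-bir-nefbig-weil} and BAB) to Shokurov's conjecture on boundedness of singularities in fibrations [\ref{B-sing-fano-fib}, Conjecture~1.2], which was open at the time of writing.

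Your proposal in fact runs into precisely this obstruction, and you have correctly located it: the step where you need a uniform $\epsilon'>0$ depending only on $d,\epsilon$ such that the generalised pair $(V,B_V+M_V)$ produced by the canonical bundle formula is generalised $\epsilon'$-lc is essentially Shokurov's conjecture itself. The complement theory in [\ref{B-compl}] does not supply this without further input. Your alternative inductive route is close in spirit to the paper's own reduction: the paper runs an MMP on $K_X$ over $V$ to get a Mori fibre space $X\to T/Z$ with $N\sim_\Q 0/T$, uses BAB to bound the Cartier index of $N$ on the generic fibre, and then invokes Shokurov's conjecture together with Lemma~\ref{l-descent-weil-divs-fib} to descend $lN$ to an integral divisor on $T$ and induct on dimension. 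So your sketch is a reasonable outline of how one would proceed \emph{given} Shokurov's conjecture, but it is not a proof, and the paper does not claim one either.
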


The latter basically says that $mL$ defines a birational map relatively over $Z$.
Note that since $X$ is of Fano type over $Z$ and $N$ is nef over $Z$, $N$ is semi-ample over $Z$ so 
indeed it defines a contraction. The global case, i.e. 
when $Z$ is a point, is a generalisation of Theorem \ref{t-eff-bir-nefbig-weil} for nef 
but not necessarily big divisors. 

The conjecture is stronger than it may look at first sight. For example, consider the case when $X$ is 
a $\Q$-factorial $\epsilon$-lc projective variety of dimension $d$ and $X\to Z$ is a Mori fibre space where 
$Z$ is a curve. Assuming $N$ is the reduction of a fibre of $X\to Z$ (that is, a fibre with induced reduced structure), 
the conjecture implies that $mN\sim 0/Z$ for some $m$ depending only on $d,\epsilon$. It is not hard to see 
that this implies Shokurov's conjecture on boundedness of singularities in fibrations 
[\ref{B-sing-fano-fib}, Conjecture 1.2].  
Conversely, Shokurov's conjecture combined with Theorem \ref{t-eff-bir-nefbig-weil} 
implies the above conjecture. If $N$ is big over $Z$, then we can apply Theorem \ref{t-eff-bir-nefbig-weil}. 
Assume $N$ is not big over $Z$. 
Running an MMP on $K_X$ over $V$ and replacing $X$ with the 
resulting model we can assume we have a Mori fibre space $h \colon X\to T/Z$ such that $N\sim_\Q 0/T$. 
The fibres of $X\to T$ over closed points are $\epsilon$-lc Fano varieties, so they belong to 
a bounded family [\ref{B-BAB}, Theorem 1.1]. Thus $lN$ is Cartier near the generic fibre of 
$X\to T$ for some bounded $l\in \N$, so $lN\sim 0$ over the generic point of $T$. 
Applying Shokurov's conjecture to $(X,B)\to T$ for some general 
$0\le B\sim_\Q -K_X/T$ and then applying Lemma \ref{l-descent-weil-divs-fib} and 
replacing $l$ we can assume $lN\sim h^* D$ for some integral divisor $D$. Replacing 
$X,N$ with $T,D$ we can apply induction on dimension.

\begin{rem}\label{rem-6}
\emph{
One may wonder if in Theorem \ref{t-eff-bir-nefbig-weil} and Conjecture \ref{conj-nef-integral-FT} (say when $Z$ is a point) 
we can choose $m$ so that $|mN|$ is base point free. But this is not the case. 
 For example, there exist  
a log smooth lc pair $(X,S)$ of dimension two and a nef and big divisor $N$ such that 
$K_X+S\sim 0$ and $S$ is the stable base locus of $N$ [\ref{Lazar}, \S 2.3.A] ($X$ can be obtained by blowing up 
$\PP^2$ in $12$ suitable points on an elliptic curve; $S$ is then the birational transform of 
the elliptic curve; $N$ is also constructed using the $12$ points). 
In particular, $N-K_X\sim N+S$ is big 
but $|mN|$ is not base point free for any $m$.}

\emph{
On the other hand, there are $3$-folds $X$ with terminal singularities and 
$N:=K_X$ ample but with arbitrarily large Cartier index, so $|mN|$ cannot be free for 
bounded $m$. 
}
\end{rem}


\vspace{2cm}

\small
\textsc{Yau Mathematical Sciences Center} \endgraf
\textsc{JingZhai Building, Tsinghua University} \endgraf
\textsc{ Hai Dian District, Beijing, China 100084  } \endgraf
\vspace{0.5cm}
\email{Email: birkar@tsinghua.edu.cn\\}

\end{document}